\newtheorem{thm}{Theorem}[section]
\newtheorem{prop}[thm]{Proposition}
\newtheorem{lem}[thm]{Lemma}
\newtheorem{cor}[thm]{Corollary}
\theoremstyle{definition}
\newtheorem{defn}[thm]{Definition}
\newtheorem{ex}[thm]{Example}
\theoremstyle{remark}
\newtheorem{rem}[thm]{Remark}
\newcommand{\Rr}{\mathbb R}
\newcommand{\Zz}{\mathbb Z}
\newcommand{\Nn}{\mathbb N}
\newcommand{\Cc}{\mathbb C}
\newcommand{\Kk}{\mathbb K}
\newcommand{\set}[1]{\left\{#1\right\}}
\newcommand{\eval}[1]{\left\langle#1\right\rangle}
\newcommand{\brr}[1]{\left[#1\right]}
\renewcommand{\O}{\ensuremath{\mathcal{O}}}
\newcommand{\ds}{\displaystyle}
\renewcommand{\d}{\mathrm d}                        
\newcommand{\smalcirc}{\mbox{\,\tiny{$\circ $}\,}}  
\newcommand{\al}{\alpha}
\DeclareMathOperator{\ad}{ad}           
\DeclareMathOperator{\End}{End}         
\DeclareMathOperator{\Coder}{Coder}     
\begin{document}

\title{$\O$-operators on  Lie $\infty$-algebras with respect to Lie $\infty$-actions}

\author{Raquel Caseiro}
\address{University of Coimbra\\ CMUC\\ Department of Mathematics\\
Apartado 3008\\
EC Santa Cruz\\
3001-501 Coimbra\\ Portugal
}
\email{raquel@mat.uc.pt}

\author{Joana Nunes da Costa}
\address{University of Coimbra\\ CMUC\\ Department of Mathematics\\
Apartado 3008\\
EC Santa Cruz\\
3001-501 Coimbra\\ Portugal
}
\email{jmcosta@mat.uc.pt}

\thanks{The authors are partially supported by the Centre for Mathematics of the University of Coimbra - UIDB/00324/2020, funded by the Portuguese Government through FCT/MCTES}

\begin{abstract}
 We define $\O$-operators on a Lie $\infty$-algebra $E$ with respect to an action of $E$ on another Lie $\infty$-algebra and we characterize them as Maurer-Cartan elements of a certain Lie $\infty$-algebra obtained by Voronov's higher derived brackets construction. 
 The Lie $\infty$-algebra that controls the deformation of $\O$-operators with respect to a fixed action is determined.
\end{abstract}

\keywords{Lie $\infty$-algebra, $\O$-operator, Maurer Cartan element}

\subjclass{17B10, 17B40, 17B70, 55P43}

\maketitle

\section*{Introduction}             %
\label{sec:introduction}          The first instance of Rota-Baxter operator appeared in the context of associative algebras in 1960, in a paper by Baxter \cite{Baxter}, as a tool to study  fluctuation theory in probability.
Since then, these operators were widely used in many branches of mathematics and mathematical physics.

Almost forty years later, Kupershmidt \cite{K} introduced
$\O$-operators on Lie algebras as a kind of generalization of  classical $r$-matrices, thus opening a broad application of $\O$-operators to integrable systems. Given a Lie algebra $(E,[\cdot,\cdot])$ and a representation $\Phi$ of $E$ on a vector space $V$, an $\O$-operator on $E$ with respect to $\Phi$ is a linear map $T:V \to E$ such that
$[T(x),T(y)]=T\big(\Phi(T(x))(y)- \Phi(T(y))(x)\big)$.
When $\Phi$ is the adjoint representation of $E$, $T$ is a Rota-Baxter operator (of weight zero). $\O$-operators are also called relative Rota-Baxter operators or generalized Rota-Baxter operators.

In recent years Rota-Baxter and $\O$-operators, in different algebraic and geometric settings, have deserved a great interest by  mathematical and physical communities.

 In \cite{TBGS2019}, a homotopy version of $\O$-operators on symmetric graded Lie algebras was introduced. This was the first step towards the definition of an $\O$-operator on a Lie $\infty$-algebra  with respect to a representation on a graded vector space that was given in  \cite{LST2021}.
The current paper also deals with $\O$-operators on Lie $\infty$-algebras, but with a different approach which uses Lie $\infty$-actions instead of representations of Lie $\infty$-algebras.   Our definition is therefore different from  the one given in \cite{LST2021} but there is a relationship between them.

There are two equivalent definitions of Lie $\infty$-algebra structure on a graded vector space $E$, both given by  collections of $n$-ary brackets which are either symmetric or skew-symmetric, depending on the definition we are considering, and must satisfy a kind of generalized Jacobi identities. One goes from one to the other by shifting the degree of $E$ and applying a  \emph{d\'ecalage} isomorphism.  We use the definition in its symmetric version, where the brackets have degree $+1$. Equivalently, this structure can be defined by a degree $+1$ coderivation $M_E$ of $\bar{S}(E)$, the reduced symmetric algebra of $E$,
 such that the commutator $[M_E,M_E]_{c}$ vanishes.

Representations of Lie $\infty$-algebras on graded vector spaces were introduced in \cite{LM}.   In \cite{LST2021}, the authors consider a representation $\Phi$ of a Lie $\infty$-algebra $E$ on a graded vector space $V$ and define an $\O$-operator (homotopy relative Rota-Baxter operator) on $E$ with respect to $\Phi$ as a degree zero element $T$ of $\textrm{Hom}(\bar{S}(V), E)$
satisfying a family of suitable identities.  Inspired by the notion of an action of a Lie $\infty$-algebra on a graded manifold \cite{MZ}, we define an action of a Lie $\infty$-algebra $(E,M_E)$ on a Lie $\infty$-algebra $(V,M_V)$ as a Lie $\infty$-morphism $\Phi$ between $E$ and $\Coder(\bar{S}(V))[1]$, the symmetric DGLA of coderivations of $\bar{S}(V)$. An $\O$-operator on $E$ with respect to the action $\Phi$ is a comorphism between $\bar{S}(V)$ and $\bar{S}(E)$ that intertwines the coderivation $M_E$ and a degree $+ 1$ coderivation of $\bar{S}(V)$ built from $M_V$ and $\Phi$, which turns out to be a Lie $\infty$-algebra structure on $V$ too.

As we said before, the two $\O$-operator definitions, ours and the one in \cite{LST2021}, are different. However, since there is a close connection between Lie $\infty$-actions and representations of Lie $\infty$-algebras, the two definitions can be related.
On the one hand, any representation of $(E,M_E)$ on a complex $(V,\d )$ can be seen as a Lie $\infty$-action of $(E,M_E)$ on $(V,D )$, with $D$ the coderivation given by the differential $\d$, and for this very ``simple" Lie $\infty$-algebra structure on $V$ our $\O$-operator definition  recovers the one given in \cite{LST2021}. On the other hand, any action $\Phi$ of $(E,M_E)$ on $(V,M_V)$ yields a representation $\rho$ on the graded vector space $\bar{S}(V)$ and an $\O$-operator with respect to the action $\Phi$ is not the same as an $\O$-operator with respect to the representation $\rho$. However, there is a way to relate the two concepts.

A well-known  Voronov's construction \cite{V05} defines a Lie $\infty$-algebra structure on an abelian Lie subalgebra $\mathfrak{h}$ of $\Coder(\bar{S}(E\oplus V))$ and  we show that $\O$-operators with respect to  the action $\Phi$ are Maurer-Cartan elements of $\mathfrak{h}$. 

In general, deformations of structures and morphisms are governed by DGLA's or, more generally, by Lie $\infty$-algebras. We do not intend to deeply study   deformations of $\O$-operators on Lie $\infty$-algebras with respect to Lie $\infty$-actions.  Still,  we prove that  deformations of an $\O$-operator are controlled by the twisting of a Lie $\infty$-algebra,  constructed out of a graded Lie subalgebra of $\Coder(\bar{S}(E\oplus V))$.

The paper is organized in four sections. In Section \ref{section1} we collect some basic results on graded vector spaces, graded symmetric algebras and Lie $\infty$-algebras that will be needed along the paper. In Section~\ref{section2}, after recalling the definition of a representation of a Lie $\infty$-algebra on a complex $(V,\d)$  \cite{LM}, we introduce the notion of action of a Lie $\infty$-algebra on another Lie $\infty$-algebra (Lie $\infty$-action) and we prove that a Lie $\infty$-action  of $E$ on $V$ induces a Lie $\infty$-algebra structure on $E\oplus V$. We pay special attention to the adjoint action of a Lie $\infty$-algebra.  In Section \ref{section3} we introduce the {main notion of the paper --}  $\O$-operator on a Lie $\infty$-algebra $E$ with respect to an action of $E$ on another Lie $\infty$-algebra, and we give the explicit relation between these operators and  $\O$-operators on $E$ with respect to a representation on a graded vector space introduced in \cite{LST2021}.
Given an $\O$-operator $T$ on $E$ with respect to a Lie $\infty$-action $\Phi$ on $V$, we show that $V$ inherits a new Lie $\infty$-algebra structure given by a degree $+ 1$ coderivation which is the sum of the initial one on $V$ with a degree $+ 1$ coderivation obtained out of $\Phi$ and $T$. 
We prove that symmetric and invertible comorphisms $T:\bar{S}(E^*) \to S(E)$ are $\O$-operators with respect to the coadjoint action if and only if a certain element of  $\bar{S}(E^*)$, which is defined using the inverse of $T$, is a cocycle for the Lie $\infty$-algebra cohomology of $E$. Section \ref{section3} ends with the characterization of $\O$-operators as Maurer-Cartan elements of a Lie $\infty$-algebra obtained by Voronov's higher derived brackets construction. {The main result in} Section \ref{section4} shows that Maurer-Cartan elements of a {graded Lie subalgebra of}  $\Coder(\bar{S}(E\oplus V))$ encode a  Lie $\infty$-algebra on $E$ and an action of $E$ on $V$. Moreover,  we obtain the Lie $\infty$-algebra that controls the deformation of $\O$-operators with respect to a fixed action.

\section{Lie \texorpdfstring{${{\infty}}$}--algebras} \label{section1}
We begin by reviewing some concepts about graded vector spaces,  graded symmetric algebras and Lie $\infty$-algebras.

\subsection{ Graded vector spaces and graded symmetric algebras}

We will work  with $\Zz$-graded vector spaces with finite dimension over a field $\Kk=\Rr$ or $\Cc$.

Let $E=\oplus_{i\in\Zz}E_i$ be a finite dimensional graded vector space. We call $E_i$ the homogeneous component of $E$ of degree $i$. An  element $x$ of $E_i$ is said to be homogeneous with degree $|x|=i$.
For each $k\in\Zz$, one may shift all the degrees by $k$ and obtain a new grading on $E$. This new graded vector space is denoted by $E[k]$ and is defined by $E[k]_i=E_{i+k}$.

A morphism $\Phi:E\to V$ between two graded vector spaces is a degree preserving  linear map, i.e. a collection of linear maps $\Phi_i:E_i\to V_i$, $i\in\Zz$. We call $\Phi:E\to V$ a  (homogeneous) morphism of degree $k$, for some $k\in\Zz$,   and we write $|\Phi|=k$, if it is a morphism between $E$ and  $V[k]$.
{This way we have a natural grading in the vector space of linear maps between graded vector spaces:
$$
{\mathrm{Hom}}(E,V)=\oplus_{i\in \Zz}{\mathrm{Hom}}_i(E,V).
$$
In particular, ${\mathrm{Hom}}(E,E)=\End(E)=\oplus_{i\in \Zz}\End_i(E) $.}

The dual $E^*$ of $E$ is naturally a graded vector space whose component of degree $i$ is, for all $ i \in {\mathbb Z}$, the dual $(E_{-i})^* $ of $E_{-i}$. In equation: $(E^*)_i = (E_{-i})^*$.

Given two graded vector spaces $E$ and $V$, their direct sum $E\oplus V$ is a vector space with grading
$$
(E\oplus V)_i= E_i\oplus V_i
$$
and their usual tensor product comes equipped with the grading $$(E\otimes V)_i= \oplus_{j+k=i} \, E_j\otimes V_k.$$

We will adopt the Koszul sign convention, for homogeneous {linear maps} $f:E\to V$ and $g:F\to W$ the tensor product $f\otimes g:E\otimes F\to V\otimes W$ is the morphism of degree $|f|+|g|$ given by
\begin{equation*}
(f\otimes g)(x\otimes y)=(-1)^{|x||g|} f(x)\otimes g(y),
\end{equation*}
 for all homogeneous $x\in E$ and $y\in F$.

For each $k\in\Nn_0$, let $T^k(E)=\otimes^k E$,  with $T^0(E)=\Kk$, and let $T(E)=\oplus_{k} T^k(E)$ be  the tensor algebra over $E$. The {\textbf{graded symmetric algebra over $E$}} is the quotient
$$
S(E)=T(E)/\eval{x\otimes y- (-1)^{|x||y|}y\otimes x}.
$$
 The symmetric algebra $S(E)= \oplus_{k\geq0}S^k(E)$ is a graded commutative algebra, whose product we denote by $ \odot $. { For $x=x_1 \odot \ldots \odot x_k \in S^k(E)$, we {set} $|x|= \sum_{i=1}^k |x_i|$.}

For  $n\geq 1$, let $S_n$ be the permutation group of order $n$. For any homogeneous elements $x_1,\ldots,x_n\in E$ and $\sigma \in S_n$, the {Koszul sign} is the element in $\{-1,1\}$ defined by
$$
x_{\sigma(1)} \odot \ldots  \odot x_{\sigma(n)}=\epsilon(\sigma) \, x_1 \odot \ldots \odot x_n.
$$
{As usual, writing $\epsilon(\sigma)$ is an abuse of notation because the Koszul sign  also depends on the $x_i$.}

An element $\sigma$ of $S_{n}$ is called an $(i,n-i)$-unshuffle if  $\sigma(1)<\ldots< \sigma(i)$ and $\sigma(i+1)<\ldots < \sigma(n)$.
The set of $(i,n-i)$-unshuffles is denoted by $Sh(i,n-i)$. { Similarly, $Sh(k_1,\ldots, k_j)$ is the set of $(k_1,\ldots, k_j)$-unshuffles, i.e., elements of $S_n$ with $k_1+\ldots+ k_j=n$ such that the order is preserved within each block of length $k_i$, $1\leq i\leq j$.}

The reduced
 symmetric algebra  $\bar S(E)=\oplus_{k\geq 1}S^k(E)$ has a natural { coassociative and } cocommutative coalgebra structure given by the coproduct $\Delta:\bar S(E)\to \bar S(E)\otimes \bar S(E)$,
 \begin{equation*}
 \Delta(x)=0, \,\, x \in E;
 \end{equation*}
 \begin{equation*}\label{def:coproduct:S(E)}
 \Delta(x_1\odot\ldots \odot x_n)=\sum_{i=1}^{n-1}\sum_{\sigma\in Sh(i,n-i)}\epsilon(\sigma) \, \left(x_{\sigma(1)}\odot\ldots \odot x_{\sigma(i)}\right)\otimes \left(x_{\sigma(i+1)}\odot\ldots\odot x_{\sigma(n)}\right),
 \end{equation*}
for $x_1,\dots,x_n\in E$.

We will  mainly use Sweedler notation: given $x \in \bar S(E)$,  $$\Delta^{(1)}(x)=\Delta(x)=x_{(1)}\otimes x_{(2)},$$
 and {the coassociativity yields} $$\Delta^{(n)}(x)=(\mathrm{id}\otimes\Delta^{(n-1)})\Delta(x)=x_{(1)}\otimes \ldots\otimes x_{(n+1)},\quad n\geq 2.$$ 
Notice that 
 $$\Delta^{(n)}(x)=0, \,\,\,\,  x \in S^{\leq n}(E).$$
 {The cocommutativity of the coproduct is expressed, for homogeneous elements of $\bar S(E)$, as
 $$x_{(1)}\otimes x_{(2)}=(-1)^{|x_{(1)}||x_{(2)}|}x_{(2)}\otimes x_{(1)}.$$
 }

Let  $V$ be another graded vector space. A
 linear map $f:\bar S(E)\to V$ is  given by a collection of maps $f_k:S^k(E)\to V$, $k\geq 1$, and is usually denoted by $f=\sum_kf_k$.

\begin{rem}
Every linear map ${f}:S^k(E)\to V$ corresponds to a graded symmetric $k$-linear map $f \in {\mathrm{Hom}}(\otimes^k E,V)$ through the quotient map $p_k:\otimes^k E \to S^k(E)$ i.e., $f\equiv{f} \smalcirc p_k$. In the sequel, we  shall often write $${f}(x_1 \odot \ldots \odot x_k)=f(x_1, \ldots, x_k), \quad x_i \in E.$$
\end{rem}
A {\bf coalgebra morphism}  {(or {\bf comorphism}) between the coalgebras $(\bar S(E), \Delta_E)$ and $(\bar S(V), \Delta_V)$ is a morphism $F:\bar S(E) \to \bar S(V)$ of graded vector spaces} such that
$$
(F\otimes F)\smalcirc \Delta_E=\Delta_V\smalcirc F.
$$
There is a one-to-one correspondence between coalgebra morphisms $F:\bar S(E)\to \bar S(V)$ and {degree preserving} linear maps $f:\bar S(E)\to V$. {Each $f$ determines $F$ by
\begin{equation*}
F(x)=\sum_{k\geq1}\frac{1}{k!}f(x_{(1)})\odot\ldots\odot f(x_{(k)}), \, x \in \bar S(E),
\end{equation*}
and $f=p_V \smalcirc F$, with $p_V:\bar S(V)\to V$ the projection map.}

A {degree $k$} {\bf coderivation} of $\bar S(E)$,  for some $k \in \mathbb Z$, is a linear map $Q:\bar S(E)\to \bar S(E)$ of degree $k$ such that
$$
\Delta\smalcirc Q=(Q\otimes \mathrm{id} + \mathrm{id}\otimes Q)\smalcirc\Delta.
$$
We also have a one to one correspondence between coderivations of $\bar S(E)$ and linear maps $q=\sum_{i}q_i:\bar S(E)\to E$:

\begin{prop} \label{prop:isomorphism:families:coderivations}
Let $E$ be a graded vector space and  $p_E:\bar S(E)\to E$ the projection map.
For every linear map $q=\sum_iq_i: \bar S(E)\to E$,  the linear map
$\ds Q: \bar S(E)\to  \bar S(E )$
given by
\begin{equation} \label{eq:coderivation}
Q(x_1\odot \ldots \odot x_n)=\sum_{i=1}^n \, \sum_{\sigma \in Sh(i,n-i)}\epsilon(\sigma)q_i\left(x_{\sigma(1)}, \ldots, x_{\sigma(i)}\right)\odot x_{\sigma(i+1)}\odot \ldots \odot x_{\sigma(n)},
\end{equation}
is the unique coderivation of $\bar S(E )$ such that $p_E\smalcirc Q =q$.
\end{prop}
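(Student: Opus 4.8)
The plan is to prove the correspondence in three moves: verify the co-restriction identity $p_E\circ Q=q$ for the $Q$ of \eqref{eq:coderivation}; prove that a coderivation is determined by its co-restriction (uniqueness); and then check that \eqref{eq:coderivation} indeed defines a coderivation (existence).

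First I would dispose of the two easy points. For $w=x_1\odot\dots\odot x_n$, the summand of \eqref{eq:coderivation} indexed by $(i,\sigma)$ lies in $S^{n-i+1}(E)$, so it contributes to the $S^1(E)=E$ component of $Q(w)$ only when $i=n$; since $Sh(n,0)=\{\mathrm{id}\}$, that contribution is $q_n(x_1,\dots,x_n)$, whence $p_E\circ Q=\sum_n q_n=q$. For uniqueness, if $Q,Q'$ are coderivations with $p_E\circ Q=p_E\circ Q'=q$, put $R:=Q-Q'$, a coderivation with $p_E\circ R=0$; I claim $R=0$. The structural input is that $\Ker\Delta=S^1(E)=E$ (the primitives of $\bar S(E)$), which holds because $\Kk$ has characteristic zero: applying $\Delta^{(k-1)}$ and projecting every slot to $E$ recovers a nonzero multiple of the canonical splitting of $E^{\otimes k}\to S^k(E)$, so $\Delta$ is injective on $S^k(E)$ for $k\geq 2$. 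Now induct on the symmetric word length: for $x\in E$, $\Delta x=0$, so $\Delta(Rx)=(R\otimes\mathrm{id}+\mathrm{id}\otimes R)(0)=0$, whence $Rx\in E$ and $Rx=p_E(Rx)=0$; and if $R$ vanishes on $\bigoplus_{k<n}S^k(E)$ and $u\in S^n(E)$, then $\Delta u\in\bigoplus_{k=1}^{n-1}S^k(E)\otimes S^{n-k}(E)$, so $R\otimes\mathrm{id}$ and $\mathrm{id}\otimes R$ both kill $\Delta u$, giving $\Delta(Ru)=0$, hence $Ru\in E$ and $Ru=0$.

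The substantive step is to check $\Delta\circ Q=(Q\otimes\mathrm{id}+\mathrm{id}\otimes Q)\circ\Delta$ for the $Q$ of \eqref{eq:coderivation}. By linearity it suffices to treat a single homogeneous component $q=q_i\colon S^i(E)\to E$, so that $Q=Q^{(i)}$ inserts exactly one ``$q_i$-block'' into a word. Evaluating on $w=x_1\odot\dots\odot x_n$: the left-hand side first picks $i$ of the variables (an $(i,n-i)$-unshuffle) to feed into $q_i$, forms the length-$(n-i+1)$ word consisting of the block $q_i(\cdots)$ together with the $n-i$ leftover variables, and then applies $\Delta$ to that word; the right-hand side first applies $\Delta$ to $w$, splitting it into blocks of sizes $(p,n-p)$, and then inserts a $q_i$-block into one of the two blocks. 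I would exhibit a bijection between the indexing data of the two sides by recording, on the left-hand side, which $\Delta$-factor receives the block $q_i(\cdots)$ and how many of the $n-i$ leftover variables accompany it; this matches exactly the right-hand-side datum ``pick a side, pick its size $p$, and pick the $i$ variables inside it that go into $q_i$'', with the $S^1(E)$-valued block $q_i(\cdots)$ sitting in place of the inserted block. It then remains to check that the Koszul signs match under this bijection: the sign $\epsilon(\sigma)$ of the inner unshuffle in \eqref{eq:coderivation}, the sign $\epsilon$ of the unshuffle defining $\Delta$ (which on the left-hand side also involves the degree of the block $q_i(\cdots)$), and the extra sign $(-1)^{|x_{(1)}||Q^{(i)}|}$ dictated by the Koszul rule when $Q^{(i)}$ acts on the second tensor factor. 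The degenerate cases need no bijection: if $n<i$ both sides vanish, and if $n=i$ then $Q^{(i)}w=q_i(w)\in E$ with $\Delta(q_i(w))=0$, while on the right every term involves a tensor factor of size $<i$, so both sides are $0$.

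I expect the only genuine obstacle to be this sign bookkeeping in the last step; the combinatorial skeleton is transparent, but matching the Koszul signs term by term demands care. A way to bypass the explicit sign chase is to invoke the universal property of $(\bar S(E),\Delta)$ as the conilpotent cofree cocommutative coalgebra cogenerated by $E$ (valid over a characteristic-zero field), of which the claimed correspondence $q\leftrightarrow Q$ is the ``coderivations lying over $\mathrm{id}_{\bar S(E)}$'' instance; I would, however, present the direct verification, as it keeps the section self-contained.
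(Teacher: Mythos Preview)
Your argument is correct and is the standard route to this well-known result: check $p_E\circ Q=q$, prove uniqueness by induction using that the primitives of $\bar S(E)$ are exactly $E$, and verify the coderivation identity by matching unshuffles on both sides. The only delicate point, as you rightly flag, is the Koszul sign bookkeeping in the last step; your bijection of indexing data is the right one, and the alternative appeal to cofreeness of $\bar S(E)$ is also legitimate.

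There is, however, nothing to compare against: the paper does not prove this proposition. It is stated in the preliminaries as a recalled fact about the coalgebra $\bar S(E)$, immediately followed by its Sweedler-notation reformulation, with no proof or sketch offered. Your write-up therefore supplies an argument where the paper gives none; if anything, it is more than the paper requires at that point.
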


{In Sweedler notation, Equation \eqref{eq:coderivation} is written as:
$$Q(x)= q(x_{(1)}) \odot x_{(2)} + q(x), \,\,\, x \in \bar S(E ).$$
}

When  $E$ is a  finite dimensional graded vector space, we may identify $S(E^*)$ with $(SE)^*$. Koszul sign conventions yield, for each homogeneous elements  $f,g\in E^*$,
\begin{equation*}
(f\odot g)(x\odot y)=(-1)^{|x||g|}f(x)\, g(y) + f(y)\, g(x), \quad x,y\in E.
\end{equation*}

\subsection{Lie \texorpdfstring{$\infty$}--algebras}

 We briefly recall the definition of Lie $\infty$-algebra \cite{LS},  some basic examples and related concepts. 

We will consider the symmetric approach to Lie $\infty$-algebras.

\begin{defn}
A \textbf{symmetric Lie $\infty$-algebra} (or a \textbf{Lie$[1]$ $\infty$-algebra}) is a graded vector space $E=\oplus_{i\in\Zz} E_i$ together with a family of degree $+1$ linear maps $l_k: S^k(E)\to E$, $k\geq 1$, satisfying
\begin{equation}\label{eq:def:symm:L:infty:algebra}
\sum_{i+j=n+1}\sum_{\sigma\in Sh(i,j-1)}\epsilon(\sigma) \, l_j\left(l_i\left(x_{\sigma(1)},\ldots, x_{\sigma(i)}\right),x_{\sigma(i+1)}\ldots, x_{\sigma(n)}\right)=0,
\end{equation}
for all $n\in\Nn$ and all homogeneous elements $x_1,\ldots,x_n\in E$.
\end{defn}

The \emph{d\'ecalage} isomorphism   \cite{V05}
 establishes a one to one correspondence between  skew-symmetric Lie $\infty$-algebra structures $\set{l'_k}_{k \in \Nn}$ on $E$ and symmetric Lie $\infty$-algebra structures $\set{l_k}_{k \in \Nn}$ on $E[1]$:
 {$$l_k(x_1, \ldots, x_k)= (-1)^{(k-1)|x_1|+(k-2)|x_2|+ \ldots + |x_{k-1}|}l'_k(x_1, \ldots, x_k).$$}
 {In the sequel, we frequently write Lie $\infty$-algebra, omitting the term symmetric.}

\begin{ex}[Symmetric graded Lie algebra]
A symmetric graded Lie algebra is a symmetric Lie $\infty$-algebra $E=\oplus_{i\in\Zz}E_i$ such that $l_n = 0$ for $n \neq 2$. Then the degree $0$ bilinear map on $E[-1]$ defined by
 \begin{equation}
 \label{eq:decalage}
  [\![x,y]\!] := (-1)^{i} l_2(x,y), \hbox{ for all $x \in E_i,y\in E_j $,}
  \end{equation}
is a graded Lie bracket.
In particular, if $E=E_{-1}$ is concentrated  on degree $-1$, we get a Lie algebra structure.
\end{ex}

\begin{ex}[Symmetric DGLA algebra]
A symmetric differential graded Lie algebra (DGLA) is a symmetric  Lie $\infty$-algebra $E=\oplus_{i\in\Zz}E_i$  such that $l_n=0$ for $n \neq 1$  and $n \neq 2$.

Then, from \eqref{eq:def:symm:L:infty:algebra}, we have that $d:=l_ 1$ is a degree $+1$ linear map $d:E\to E$  squaring zero and satisfies the following compatibility condition with the bracket $\brr{\cdot , \cdot }:=l_2(\cdot , \cdot)$ :
\begin{equation*}
\label{eq:gradedLie}
\left\{\begin{array}{l}
d\brr{x,y} + \brr{d(x),y} + (-1)^{|x|}\brr{x, d(y)}=0,\\
\brr{\brr{x,y},z} + (-1)^{|y||z|}\brr{\brr{x,z},y} + (-1)^{|x|}\brr{x,\brr{y,z}}=0,
\end{array}\right.
\end{equation*}
{Applying the \emph{d\'ecalage} isomorphism, $(E[-1], d, [\![\cdot, \cdot]\!])$ is a (skew-symmetric) DGLA, with $[\![\cdot, \cdot]\!]$ given by \eqref{eq:decalage}.}
\end{ex}

\begin{ex} \label{example:DGLA_Coder}
\label{ex:DGLA:End:E}
Let $(E=\oplus_{i\in\Zz}E_i, \d)$ be a cochain complex. 
Then ${\End (E)[1]}$ 
 has a natural symmetric DGLA  structure with $l_1=\partial, \;\;l_2=\brr{\cdot , \cdot}$ given by:
\begin{equation*}
\left\{\begin{array}{l}
 \partial\phi
=-\d \smalcirc \phi + (-1)^{|\phi|+1} \phi\smalcirc \d,\\
 \brr{\phi,\psi}
=(-1)^{|\phi|+1}\left(\phi\smalcirc \psi - (-1)^{(|\phi|+1)(|\psi|+1)} \psi\smalcirc\phi \right),
\end{array}\right.
\end{equation*}
for $\phi, \psi$ homogeneous elements of $\End(E)[1]$. {In other words, $ \partial\phi=- [ \d, \phi]_{c}$ and $\brr{\phi,\psi}=(-1)^{\deg(\phi)}[ \phi, \psi]_{c}$, with $[\cdot, \cdot]_{c}$ the graded commutator on $\End(E)$ and $\deg(\phi)$ the degree of $\phi$ in $\End(E)$.}
\end{ex}

The symmetric Lie bracket $\brr{\cdot ,\cdot}$ on $\End(\bar S(E))[1]$  preserves  $\Coder(\bar S(E))[1]$, the space of coderivations of $\bar S(E)$, {so that $(\Coder(\bar S(E))[1], \partial, \brr{\cdot ,\cdot})$ is a symmetric DGLA.}

The isomorphism between ${\mathrm{Hom}}(\bar S(E),E)$ and $\Coder(\bar S(E))$ given by Proposition \ref{prop:isomorphism:families:coderivations},
induces a Lie bracket on  ${\mathrm{Hom}}(\bar S(E),E)$ known as the Richardson-Nijenhuis bracket:
$$
\brr{f,g}_{_{RN}}(x)=f(G(x))-(-1)^{|f||g|}g(F(x)),\quad x\in \bar S(E),
$$
for each $f,g\in  {\mathrm{Hom}}(\bar S(E),E)$, where $F$ and $G$ denote the coderivations defined by $f$ and $g$, respectively. {In other words, $\brr{F,G}_{c}$ is the (unique) coderivation of $\bar S(E)$ determined by $\brr{f,g}_{_{RN}} \in {\mathrm{Hom}}(\bar S(E),E)$.

Degree $+1$ elements $l:=\sum_k l_k$ of ${\mathrm{Hom}}(\bar S(E),E)$ satisfying $[l,l]_{_{RN}}=0$ define a Lie $\infty$-algebra structure on $E$.} This way we have an alternative definition of Lie $\infty$-algebra \cite{LS}:
\begin{prop}
A Lie $\infty$-algebra is a graded vector space $E$ equipped with a degree $+1$ coderivation $M_E$ of $\bar S(E)$ such that \begin{equation*}\brr{M_E,M_E}_{c}=2M_E^2=0.\end{equation*}
\end{prop}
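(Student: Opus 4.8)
The plan is to prove the equivalence between the two characterizations of a Lie $\infty$-algebra structure by unpacking the correspondence of Proposition \ref{prop:isomorphism:families:coderivations} together with the Richardson--Nijenhuis bracket. First I would recall that, under the isomorphism $q=\sum_i q_i \leftrightarrow Q$ between ${\mathrm{Hom}}(\bar S(E),E)$ and $\Coder(\bar S(E))$, the graded commutator $[\cdot,\cdot]_c$ on $\Coder(\bar S(E))$ transports to the Richardson--Nijenhuis bracket $[\cdot,\cdot]_{_{RN}}$ on ${\mathrm{Hom}}(\bar S(E),E)$; in particular for a degree $+1$ element $l=\sum_k l_k$ with associated coderivation $M_E$ we have $[l,l]_{_{RN}} \leftrightarrow [M_E,M_E]_c = 2M_E^2$. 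The second equality $[M_E,M_E]_c = 2M_E^2$ is immediate from the definition of the graded commutator on degree $+1$ (odd) elements: $[M_E,M_E]_c = M_E M_E - (-1)^{1\cdot 1}M_E M_E = 2M_E^2$.

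Next I would show that $M_E^2 = 0$ (equivalently $[l,l]_{_{RN}}=0$) is equivalent to the generalized Jacobi identities \eqref{eq:def:symm:L:infty:algebra}. Since $M_E^2$ is again a coderivation of $\bar S(E)$ (the commutator of two coderivations is a coderivation), it is determined by its corestriction $p_E \smalcirc M_E^2 : \bar S(E)\to E$. So $M_E^2=0$ if and only if $p_E \smalcirc M_E^2 = 0$. Using the Sweedler-notation formula $M_E(x) = l(x_{(1)})\odot x_{(2)} + l(x)$ recorded after Proposition \ref{prop:isomorphism:families:coderivations}, I would compute $p_E \smalcirc M_E^2(x)$: the only terms surviving projection to $E$ are those where the outer $l$ is applied to something of the form $l(\text{part})\odot(\text{rest})$, giving $p_E M_E^2(x) = l\big(l(x_{(1)})\odot x_{(2)}\big) + l\big(l(x)\big)$. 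Evaluating this on a homogeneous element $x = x_1\odot\cdots\odot x_n$ and expanding the coproduct $\Delta$ in terms of unshuffles produces exactly the left-hand side of \eqref{eq:def:symm:L:infty:algebra} for each $n$, with the Koszul signs $\epsilon(\sigma)$ arising from the definition of $\Delta$ and the symmetry of the quotient. Thus the vanishing of $p_E M_E^2$ on each $S^n(E)$ is precisely the family of identities \eqref{eq:def:symm:L:infty:algebra}.

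The main obstacle is the bookkeeping in this last computation: one must carefully match the double sum over $i$ and over $(i,j-1)$-unshuffles appearing in \eqref{eq:def:symm:L:infty:algebra} with the composition of two coderivations, checking that the Koszul signs produced by iterating $\Delta$ and by the Koszul convention on $q_i\otimes\mathrm{id}$ agree with $\epsilon(\sigma)$, and that no terms are double-counted or missed. This is entirely standard (it is the content of \cite{LS}), so I would not grind through it; I would simply indicate that a direct expansion, using Proposition \ref{prop:isomorphism:families:coderivations} to write $M_E$ explicitly and the coassociativity/cocommutativity of $\Delta$, yields the equivalence. A clean alternative is to cite \cite{LS, V05} for the fact that the d\'ecalage isomorphism and the correspondence of Proposition \ref{prop:isomorphism:families:coderivations} already identify the two definitions, and merely observe that $[M_E,M_E]_c = 2M_E^2$ on odd elements, which is the only genuinely new line.
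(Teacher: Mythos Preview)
Your proposal is correct and is the standard argument. Note, however, that the paper does not actually supply a proof of this proposition: it is stated as a known reformulation, with the preceding sentence attributing it to \cite{LS} (``This way we have an alternative definition of Lie $\infty$-algebra \cite{LS}''). So there is no ``paper's own proof'' to compare against; your last paragraph, where you suggest simply citing \cite{LS,V05} together with the one-line observation $[M_E,M_E]_c=2M_E^2$ for odd $M_E$, is exactly what the paper does.
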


The dual of the coderivation $M_E$ yields a differential $\d_*$ on $\bar{S}(E^*)$. 
The \textbf{cohomology of the Lie $\infty$-algebra} $\left( E, M_E \equiv \set{l_k}_{k\in\Nn}
\right)$ is the cohomology defined by the differential $\d_*$.

A {\bf Maurer-Cartan element} of a Lie $\infty$-algebra $(E,\set{l_k}_{k\in\Nn})$ is a degree zero element $z$ of $E$ such that
\begin{equation} \label{def:MC:element}
\sum_{k \geq 1}\frac{1}{k!}\, l_{k}(z, \ldots,z) =0.
\end{equation}
The set of Maurer-Cartan elements of $E$ is denoted by $\textrm{MC}(E)$.
Let $z$ be a Maurer-Cartan element of $(E,\set{l_k}_{k\in\Nn})$ and set, for $k\geq 1$,
\begin{equation} \label{def:twisting:MC}
l_k^z(x_1, \ldots, x_k):= \sum_{i\geq 0}\frac1{i!}\, l_{k+i}(z, \ldots, z, x_1, \ldots, x_k).
\end{equation}
Then, $(E,\set{l_k^z}_{k\in\Nn})$ is a Lie $\infty$-algebra, called the \emph{twisting of} $E$ by $z$ \cite{G}.
 For filtered, or even weakly filtered Lie $\infty$-algebras, the convergence of the infinite sums defining Maurer-Cartan elements and twisted Lie $\infty$-algebras (Equations (\ref{def:MC:element}) and (\ref{def:twisting:MC})) is guaranteed (see \cite{G,FZ,LST2021}).

For a symmetric graded Lie algebra $(E,l_2)$, the twisting by $z \in\textrm{MC}(E)$ is the symmetric DGLA $(E, l_1^z=l_2(z, \cdot),l_2^z=l_2)$.


\subsection{Lie \texorpdfstring{$\infty$}--morphisms}

A morphism of Lie $\infty$-algebras is a  morphism between symmetric coalgebras that is compatible with the Lie $\infty$-structures.

\begin{defn}\label{defn:Lie:infty:morphism}
Let $(E,\set{l_k}_{k\in\Nn})$ and $(V,\set{m_k}_{k\in\Nn})$ be Lie $\infty$-algebras. A \textbf{Lie ${\infty}$-morphism } $\ds \Phi:E \rightarrow V$
is given by a collection of degree zero linear maps:
$$
\Phi_k:S^k(E)\to V,\quad k\geq 1,
$$
such that, for each $n\geq 1$,
\begin{align}\label{eq:def:Lie:infty:morphism}
&\sum_{\begin{array}{c} \scriptstyle{k+l=n}\\ \scriptstyle{\sigma\in Sh(k,l)}\\ \scriptstyle{l\geq 0, \, k\geq 1}\end{array}}\!\!\!\!\!\!\! \varepsilon(\sigma) \Phi_{1+l}\Big(l_k(x_{\sigma(1)},\ldots, x_{\sigma(k)}), x_{\sigma(k+1)}, \ldots, x_{\sigma(n)}\Big)  \\ &=\!\!\!\!\!\!\!\!\!\!\! \sum_{\begin{array}{c}\scriptstyle{k_1+\ldots+ k_j=n} \\ \scriptstyle{\sigma\in Sh(k_1,\ldots, k_j)}\end{array}} \!\!\!\!\!\!\!  \frac{\varepsilon(\sigma)}{j!}\,m_j\Big(\Phi_{k_1}(x_{\sigma(1)}, \ldots x_{\sigma(k_1)}),
\Phi_{k_2}(x_{\sigma(k_1+1)}, \ldots \nonumber x_{\sigma(k_1+k_2)}),\ldots,\\
&\hspace{6cm}  \Phi_{k_j}(x_{\sigma(k_1+\ldots+k_{j-1}+1)}, \ldots, x_{\sigma(n)})\Big),\nonumber
\end{align}
\end{defn}

\noindent{If $\Phi_k=0$ for $k\neq 1$, then $\Phi$ is called a \textbf{{strict} Lie ${\infty}$-morphism.}}

A \textbf{curved Lie $\infty$-morphism} $E\to V$, with $V$ a weakly filtered Lie $\infty$-algebra, is a degree zero linear map $\Phi:S(E)\to V$ satisfying, for $n\geq 0,$ an adapted version  of \eqref{eq:def:Lie:infty:morphism} where the indexes $k_1,\ldots, k_j$ on the right hand side of the equation run from $0$ to $n$.  The zero component $\Phi_0:\Rr\to V_0$ gives rise to an element $\Phi_0(1)\in V_0$, which by abuse of notation we denote by $\Phi_0$. The curved adaptation  of \eqref{eq:def:Lie:infty:morphism}, for $n=0$, then reads $0=\sum_{j\geq 1}\frac{1}{j!}\,m_j(\Phi_0,\ldots,\Phi_0)$. In other words,  $\Phi_0$ is a Maurer Cartan element of $V$ \cite{MZ}.

\

Considering the coalgebra morphism $\Phi: \bar S(E)\to \bar S(V)$ defined by
the collection of degree zero linear maps $$
\Phi_k:S^k(E)\to V,\quad k\geq 1,
$$
  we see that Equation \eqref{eq:def:Lie:infty:morphism} is equivalent to
$\Phi$ preserving the Lie $\infty$-algebra structures:
$$\Phi \smalcirc M_E=M_V\smalcirc \Phi.$$

\section{Representations of Lie \texorpdfstring{$\infty$}--algebras} \label{section2}

 A complex $(V,\d)$ induces a  natural symmetric DGLA structure in $\End(V)[1]$, see Example \ref{ex:DGLA:End:E}.

\begin{defn}
 A \textbf{representation} of a Lie $\infty$-algebra $(E,\set{l_k}_{k\in\Nn})$ on a complex $(V,\d)$ is a Lie $\infty$-morphism $$\Phi:(E,\set{ l_k}_{k\in\Nn})\rightarrow (\End(V)[1],\partial, \brr{\cdot , \cdot}),$$
 {i.e., $\Phi \smalcirc M_E= M_{\End(V)[1]}\smalcirc \Phi$, where $M_E$ is the coderivation determined by $\sum_k l_k$ and $ M_{\End(V)[1]}$ is the coderivation determined by $\partial+ \brr{\cdot , \cdot}$.}
\end{defn}

 Equivalently, a representation of $E$ is defined by a collection of degree $+1$ maps
$$\Phi_k:S^k(E)\to \End(V),\quad k\geq 1,$$
such that, for each $n\geq 1$,
\begin{align} \label{eq:def:representation}
\lefteqn{\sum_{\begin{array}{c} \scriptstyle{i=1}\\ \scriptstyle{\sigma\in Sh(i,n-i)}\end{array}}^{\scriptstyle{n}}\!\!\!\!\!\!\!\!\!\! \varepsilon(\sigma)\Phi_{n-i+1}\left(l_i\left(x_{\sigma(1)}, \ldots, {x_{\sigma(i)}}\right), {x_{\sigma(i+1)}}, \ldots, {x_{\sigma(n)}}\right)=} \\
&= &  \partial  \Phi_n(x_1,\ldots, x_n)+ \frac{1}{2} \!\!\!\!\!\!\!\!\!\!\sum_{\begin{array}{c} \scriptstyle{j=1}\\ \scriptstyle{\sigma\in Sh(j,n-j)}\end{array}}^{\scriptstyle{n-1}} \!\!\!\!\!\!\!\!\!\!\varepsilon(\sigma)\brr{\Phi_j({x_{\sigma(1)}}, \ldots, {x_{\sigma(j)}}) , \Phi_{n-j}({x_{\sigma(j+1)}}, \ldots, {x_{\sigma(n)}}) }. \nonumber
\end{align}

\begin{rem}
A representation on a complex $(V,\d)$ can be seen as a curved Lie $\infty$-morphism $\Phi:E \to \End (V)[1]$,  with $\Phi=\sum_{k\geq 0}\Phi_k$ and  $\Phi_0=\d$.  In fact, the first term on the right hand-side of Equations \eqref{eq:def:representation} is given by
$$\partial  \Phi_n(x_1,\ldots, x_n)=\brr{\Phi_0, \Phi_n(x_1,\ldots, x_n)},$$
and we have a curved Lie $\infty$-morphism $$\Phi:(E,\set{ l_k}_{k\in\Nn})\rightarrow (\End(V)[1],\brr{\cdot, \cdot})$$
between the Lie $\infty$-algebra $E$ and the symmetric graded Lie algebra $(\End(V)[1],\brr{\cdot, \cdot})$ (see \cite{MZ}, Lemma 2.5).
{This is why  sometimes a representation of a Lie $\infty$-algebra $E$ on a complex $(V,\d)$  is called a representation on the graded vector space $V$ (compatible with the differential $\d$ of $V$). }
\end{rem}

Any representation {$\Phi:E\to \End(V)[1]$} of a Lie $\infty$-algebra $E$ on a complex $(V,\d)$ has a dual one. 
Let $$
{}^*:\End(V)\to \End(V^*)
$$
be the Lie $\infty$-morphism given by
\begin{equation}\label{eq:dual:map}
\eval{f^*(\al),v}=-(-1)^{|\al||f|}\eval{\al, f(v)},\quad f\in\End(V), \al\in V^*, v\in V.   \end{equation}
The \textbf{dual representation} ${} ^*\Phi:E\to \End(V^*)[1]$ is obtained by composition of   $\Phi$ with this Lie $\infty$-morphism. It is a representation on the complex $(V^*,\d^*)$
 and is given by
\begin{equation}\label{eq:dual:representation}
\eval{{}^*\Phi(e)(\al), v}=-(-1)^{(|e|+1)|\al|}\eval{\al,\Phi(e)(v)}, \quad e\in \bar S(E),\, \al\in V^*, \, v\in V.  \end{equation}

\begin{rem} \label{rem:lie:infty:algebra:E+V}
Given a representation $\Phi:E\rightarrow \End(V)[1]$ on a complex $(V,\d)$, defined by the collection of degree $+1$ linear maps $\Phi_k:S^k(E)\to \End(V)$, $k\geq 1$, one may consider the collection of degree $+1$ maps
$\phi_k:S^{k}(E)\otimes V\to V$, $k\geq 0$, where  $\phi_0=\d:V\to V$ {and $\phi_k(x,v)=(\Phi_k(x))(v), \, k\geq1$.}

{The embedding $\bar S(E)\oplus \big(S(E)\otimes V\big) \hookrightarrow \bar S(E\oplus V)$,  provides a collection of maps}  
\begin{equation*}
 \tilde\Phi_k: S^{k}(E\oplus V)\to E\oplus V, \,\,\, k\geq 1,
\end{equation*}
given by
\begin{align*}
&\tilde\Phi_k\left((x_1, v_1), \ldots, (x_{k}, v_{k}) \right)  \\& =\bigg(l_{k}(x_1, \ldots, x_{k}), \sum_{i=1}^{k}(-1)^{|x_i|(|x_{i+1}|+\ldots+|x_{k}|)}\phi_{k-1}(x_1,\ldots, \widehat{x_i},\ldots, x_{k},v_i)\bigg),
\end{align*}
and  we may
express Equations \eqref{eq:def:representation} as
\begin{equation}\label{eq:def:representation:LModule}
\tilde\Phi_\bullet\left(\tilde\Phi_\bullet(x_{(1)})\odot x_{(2)} \right) + \tilde\Phi_1\tilde\Phi_\bullet(x)=0, \quad x\in \bar S(E\oplus V).
\end{equation}

Equation \eqref{eq:def:representation:LModule} means that $\tilde{\Phi} $  equips  $E \oplus V$ with a Lie $\infty$-algebra structure.

\end{rem}

Now suppose the graded vector space $V$ has  a Lie $\infty$-algebra structure $\set{ m_k}_{k\in\Nn}$ given by a coderivation $M_V$ of $\bar S(V)$. By the construction in Example~\ref{example:DGLA_Coder}, the
coderivation  $M_V$ of $\bar S(V)$ defines
a symmetric DGLA structure in $\Coder(\bar S(V))[1]$:
$$
\partial_{M_V}Q=- M_V\smalcirc Q + (-1)^{\deg(Q)} Q\smalcirc M_V,
$$
$$
\brr{Q,P}=(-1)^{\deg(Q)}\bigg(Q\smalcirc P-(-1)^{\deg(Q)\deg(P)}P\smalcirc Q\bigg),
$$
where $\deg(Q)$ and $\deg(P)$ are the degrees of $Q$ and $P$ in $\Coder (\bar S(V))$.

{Generalizing the notion of an action of a graded Lie algebra on another graded Lie algebra,} we have the following definition of an action of a Lie $\infty$-algebra  on another Lie $\infty$-algebra:

\begin{defn}
An \textbf{action of the Lie $\infty$-algebra}  $(E,M_E\equiv\set{ l_k}_{k\in\Nn})$ on the Lie $\infty$-algebra $(V,M_V\equiv\set{ m_k}_{k\in\Nn})$, or a \textbf{Lie $\infty$-action} of $E$ on $V$, is a  Lie $\infty$-morphism
$$
\Phi: (E,\set{ l_k}_{k\in\Nn}) \to (\Coder (\bar S(V))[1], \partial_{M_V}, \brr{\cdot,\cdot}).
$$
\end{defn}

\begin{rem}  \label{rem:maps:definition:action}
{Being a Lie $\infty$-morphism,} an action $$
\Phi: E \to \Coder (\bar S(V))[1]
$$ is univocally defined by a collection of {degree $+1$} linear maps
$$
\Phi_k: S^k(E)\to \Coder {(\bar S(V)), \quad k \geq 1.}
 $$
 By the isomorphism provided in Proposition \ref{prop:isomorphism:families:coderivations}, {and since each $\Phi_k(x), \,\, x \in S^k(E)$, is a coderivation of $\bar S(V)$,} we see that an action is
 completely defined by a collection of  linear maps
\begin{equation}\label{eq:linear:maps:representation}
   \Phi_{k,i}:  S^k(E)\otimes  S^i(V)\to V, \quad i,k\geq 1. 
\end{equation}
{We will denote the coderivation $\Phi_k(x)$ simply by $\Phi_x$.}
\end{rem}

\begin{rem}
If we define $\Phi_{0}:=M_{V}$, then
an action is equivalent to  a curved Lie $\infty$-morphism  between $E$ and the graded Lie algebra $\Coder (\bar S(V))$ (compatible with the Lie $\infty$-structure in $V$) \cite{MZ}. {In this case, $\Phi=\sum_{k\geq 0}\Phi_k$ is called a {\bf{curved Lie $\infty$-action}}}.
\end{rem}

{There is a close relationship between representations and actions on Lie $\infty$-algebras.}

 First notice that each linear map $\mathcal{l}:V\to V$ induces a (co)derivation of $\bar S(V)$. Hence we may see $\End (V)[1]$ as a Lie $\infty$-subalgebra of  $\Coder(\bar S(V))[1]$. Therefore,
given a representation $\Phi:E\to \End(V)[1]$ of the Lie $\infty$-algebra $E$ on the complex $(V,\d)$, we have a natural action of $E$ on the Lie $\infty$-algebra $(V,M_{V})$, where $M_{V}$ is the coderivation defined by the map $\d:V\to V$.
In this case, we say {\bf{the action is induced by a representation}.}

Moreover, for each action $\Phi:E\to \Coder{(\bar S(V))}[1]$ of $E$ on the  Lie $\infty$-algebra $(V, M_{V}\equiv\set{ m_k}_{k\in\Nn})$, we have a  representation of $E$ on $V$ given by
 the collection of maps $\Phi_{k,1}:S^k(E)\otimes V \to V$, $k\geq 1$, or equivalently, $\Phi_{k,1}\equiv \rho_k:S^k(E)\to \End(V)$, $k\geq 1$. The morphism $\rho=\sum_k \rho_k$ is a representation of the Lie $\infty$-algebra $E$ on the complex $(V, \d=m_1)$,  called the \textbf{linear representation defined by $\Phi$}.

Finally one should notice that, given a Lie $\infty$-algebra $(V,M_V)$, the graded vector space $\Coder (\bar S(V))[1]$ is a Lie $\infty$-subalgebra of $\End(\bar S(V))[1]$. Therefore, any action  $\Phi:E\to \Coder(\bar S(V))[1]$ of the Lie $\infty$-algebra $E$ on $(V,M_V)$ yields a representation of $E$ on the graded vector space $\bar S(V)$. We call it the \textbf{representation induced by the action  $\Phi$}. The coderivation $M_V$ defines a (co)derivation of $\bar S(\bar S(V))$ and the representation is  compatible with this (co)derivation.

\begin{rem} In \cite{MZ}, the authors define an action of a finite dimensional Lie $\infty$-algebra $E$ on a graded manifold $\mathcal{M}$ as a Lie $\infty$-morphism $\Phi:E\to \mathfrak{X}({\mathcal{M}})[1]$.  As the authors point out, when $\mathcal{M}$ is the graded manifold defined by a  finite dimensional Lie $\infty$-algebra, we have an action of a Lie $\infty$-algebra on another Lie $\infty$-algebra.  The definition presented here is a particular case of theirs because we are only considering coderivations of $\bar S(V)$, i.e. coderivations of $S(V)$  vanishing on the field $S^{0}(E)$.  This restrictive case reduces to the usually Lie algebra action on another Lie algebra (and its semi-direct product) while the definition given in \cite{MZ}, gives rise to general Lie algebra extensions. For our purpose, this definition is more adequate. \end{rem}

Next, with the identification $S^n(E\oplus V)\simeq \oplus_{k=0}^n S^{n-k}(E)\otimes S^{k}(V)$, we see that the action $\Phi$ determines a coderivation of $\bar S(E\oplus V)$. Together with $M_E$ and $M_V$ we have a Lie $\infty$-algebra structure on $E\oplus V$.
Next proposition can be deduced from \cite{MZ}.
\begin{prop}\label{prop:lie:infty:algebra:E+V} Let $(E, M_E\equiv\set{ l_k}_{k\in\Nn})$ and $(V,M_V\equiv\set{ m_k}_{k\in\Nn})$ be Lie $\infty$-algebras.
An action
$$\Phi: E \to \Coder (\bar S(V))[1]$$
defines a Lie $\infty$-algebra structure on $E\oplus V$.
\end{prop}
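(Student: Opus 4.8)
The plan is to produce explicitly the degree $+1$ coderivation $M$ of $\bar S(E\oplus V)$ that encodes the three pieces of data $M_E$, $M_V$ and $\Phi$, and then to verify $M^2=0$ by reducing it to the hypotheses: $[M_E,M_E]_c=0$, $[M_V,M_V]_c=0$, and the Lie $\infty$-morphism equation for $\Phi$, namely $\Phi\smalcirc M_E = M_{\Coder(\bar S(V))[1]}\smalcirc\Phi$. Using the identification $S^n(E\oplus V)\simeq \oplus_{k=0}^n S^{n-k}(E)\otimes S^k(V)$, I would first describe $M$ through its corestriction $m = p_{E\oplus V}\smalcirc M : \bar S(E\oplus V)\to E\oplus V$ (Proposition \ref{prop:isomorphism:families:coderivations}), splitting $m$ into an $E$-component and a $V$-component. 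On $\bar S(E)$, $m$ restricts to the corestriction of $M_E$; on $\bar S(V)$, to that of $M_V$; and on the mixed summands $S^{n-k}(E)\otimes S^k(V)$ with $n-k\geq 1$, the $E$-component is zero and the $V$-component is built from the maps $\Phi_{k,i}:S^k(E)\otimes S^i(V)\to V$ of Remark \ref{rem:maps:definition:action} (so that on $S^k(E)\otimes S^i(V)$ it reads $\Phi_k(x_1\odot\cdots\odot x_k)$ applied to $v_1\odot\cdots\odot v_i$, i.e. $\Phi_{k,i}(x_1,\ldots,x_k,v_1,\ldots,v_i)$, up to Koszul signs). This defines $M$ uniquely as a coderivation by Proposition \ref{prop:isomorphism:families:coderivations}.

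Next I would compute $M^2$, or equivalently $\tfrac12[M,M]_c$, and organize the output by bidegree in $E\oplus V$, i.e. by which summand $S^p(E)\otimes S^q(V)$ of the source it acts on and which summand of the target it lands in. Because $M$ has $E$-component only on the pure $\bar S(E)$ part, $M^2$ can only hit the $E$-component coming from pure $E$-arguments, and there it is exactly $M_E^2=0$. The $V$-component of $M^2$ splits: restricted to $\bar S(V)$ it is $M_V^2=0$; restricted to pure $\bar S(E)$ it produces a combination of $\Phi$-terms composed with $M_E$ and of $\Phi$-terms composed with each other (via $M_V$-like brackets of coderivations of $\bar S(V)$) — and this is precisely the $\Coder(\bar S(V))[1]$-valued equation $\Phi\smalcirc M_E = (\partial_{M_V} + \brr{\cdot,\cdot})\smalcirc\Phi$ unwound through the isomorphism of Proposition \ref{prop:isomorphism:families:coderivations}, hence vanishes. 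The remaining mixed terms, acting on $S^p(E)\otimes S^q(V)$ with both $p,q\geq 1$, pair a $\Phi$-application with either an $M_E$-application on the $E$-slots, an $M_V$-application on the $V$-slots, or another $\Phi$-application; assembling them and using that each $\Phi_k(x)$ is a genuine coderivation of $\bar S(V)$ (so it interacts with $M_V$ and with the coproduct compatibly), these are again governed by the same Lie $\infty$-morphism identity for $\Phi$, now read on all of $\bar S(V)$ rather than just on $V$.

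The main obstacle I expect is purely bookkeeping: keeping the Koszul/unshuffle signs consistent when passing between $S^n(E\oplus V)$ and $\bigoplus_k S^{n-k}(E)\otimes S^k(V)$, and checking that the ``cross'' terms of $M^2$ group correctly into the $\Coder$-valued morphism equation rather than leaving a residue. A clean way to sidestep much of this is to work coalgebra-theoretically: verify instead that the comorphism-level statement $M\smalcirc M = 0$ follows from the three coalgebra identities $M_E^2=0$, $M_V^2=0$, and the comorphism form $\Phi\smalcirc M_E = M_{\Coder(\bar S(V))[1]}\smalcirc\Phi$, using that coderivations of $\bar S(E\oplus V)$ are detected by their corestriction — thereby only needing to match corestrictions, which cuts the sign combinatorics roughly in half. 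Finally, I would note (as the statement's phrasing ``can be deduced from \cite{MZ}'' suggests) that this is the restriction to coderivations of $S(V)$ vanishing on $S^0$ of the construction in \cite{MZ} of the Lie $\infty$-algebra associated to a Lie $\infty$-action on a graded manifold, so the result also follows by specialization; the point of the direct argument is to have the explicit coderivation $M$ on $\bar S(E\oplus V)$ at hand for the later sections.
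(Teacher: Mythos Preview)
Your plan is essentially the paper's own proof: define $M_{E\oplus V}=M_E+\Upsilon+M_V$ (where $\Upsilon$ is the coderivation built from the maps $\Phi_{k,i}$), then check $M_{E\oplus V}^2=0$ on pure $\bar S(E)$, pure $\bar S(V)$, and mixed tensors $x\otimes v$, the last case reducing via Sweedler notation to the Lie $\infty$-morphism identity $\Phi_{M_E(x)}=-M_V\smalcirc\Phi_x-(-1)^{|x|}\Phi_x\smalcirc M_V+\tfrac12\brr{\Phi_{x_{(1)}},\Phi_{x_{(2)}}}$. One small slip in your outline: on \emph{pure} $\bar S(E)$ there is no $V$-component at all (the $\Phi_{k,i}$ require $i\geq 1$), so nothing to check there beyond $M_E^2=0$; the place where the morphism equation for $\Phi$ is actually invoked is exclusively on the mixed summands $S^p(E)\otimes S^q(V)$ with $p,q\geq 1$, exactly as the paper computes.
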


\begin{proof}
We consider the brackets $\{\mathfrak{l}_n\}_{n \in \Nn}$ on $E\oplus V$ given by:
\begin{align*}
& \mathfrak{l}_n(x_1, \ldots, x_n)=l_n(x_1, \ldots, x_n), \quad x_i \in E\\
& \mathfrak{l}_n(v_1, \ldots, v_n)=m_n(v_1, \ldots, v_n), \quad v_i \in V \\
&  \mathfrak{l}_{k+n}(x_1, \ldots, x_k,v_1, \ldots, v_n )= \Phi_{k,n}(x_1, \ldots, x_k,v_1, \ldots, v_n ),
\end{align*}
with  $\Phi_{k,n}:S^k(E)\otimes S^n(V) \to V$ the collection of linear maps defining $\Phi$ (see Remark~\ref{rem:maps:definition:action}).

{The collection of linear maps $\Phi_{k,n}$ defines a coderivation of $\bar S(E\oplus V)$, $$\Upsilon:\bar S(E\oplus V)\to \big(\bar S(E)\otimes \bar S(V)\big)\oplus \bar S(V)\subset \bar S(E\oplus V)$$  related to the 
action $\Phi$ by
$$
\Upsilon(x\otimes v)=\Phi_x(v), \quad x\in E,\, v\in \bar S(V)$$
and $$
\Upsilon(x\otimes v)=\Phi_x(v) + (-1)^{|x_{(1)}|}x_{(1)}\otimes\Phi_{x_{(2)}}(v), \quad x\in S^{\geq 2}(E),\, v\in \bar S(V).
$$}
The degree $+1$ coderivation of $\bar S(E\oplus V)$ determined by $\{\mathfrak{l}_n\}_{n \in \Nn}$ is  $$M_{E\oplus V}=M_E+\Upsilon+M_V.$$ 
Let us prove that $M_{E\oplus V}^2=0$. For $ x \in \bar S(E)$ and $v \in \bar S(V)$,
$$M_{E\oplus V}^2(x)=M_E^2(x)=0 \,\,\,\,\textrm{and}\,\,\,\,  M_{E\oplus V}^2(v)=M_V^2(v)=0$$
while, for mixed terms, we have  $$M_{E\oplus V}(x\otimes v)= M_E(x)\otimes v+(-1)^{|x|}x \otimes M_V(v)+(-1)^{|x_{(1)}|}x_{(1)}\otimes \Phi_{x_{(2)}}(v)+\Phi_x(v)$$
and 
$$  \mathfrak{l}(M_{E\oplus V}(x\otimes v))=(\Phi_{M_E(x)})_{\bullet}\,  (v)+ (-1)^{|x|}(\Phi_x)_{\bullet} \, (M_V(v))+(-1)^{|x_{(1)}|}(\Phi_{x_{(1)}})_{\bullet}( \Phi_{x_{(2)}}(v))+m_{\bullet}(\Phi_x(v)).$$

Since $\Phi$ is a Lie $\infty$-morphism, we have $$\Phi_{M_E(x)}=-M_V\smalcirc \Phi_x-(-1)^{|x|}\Phi_x \smalcirc M_V+ \frac12\brr{\Phi_{x_{(1)}},\Phi_{x_{(2)}}},$$
which  implies $ M_{E\oplus V}^2=0$.
\end{proof}

The Lie $\infty$-algebra structure in $E\oplus V$ presented in Remark~\ref{rem:lie:infty:algebra:E+V}, is a particular case of Proposition~\ref{prop:lie:infty:algebra:E+V}, with $M_V=\d$.

\

\paragraph{\textbf{Adjoint representation and adjoint action}}
An important example of a representation is given by a Lie $\infty$-algebra structure.

Let $\left( E, M_E \equiv \set{l_k}_{k\in\Nn} \right)$ be a Lie $\infty$-algebra; thus $(E,l_1)$ is a complex.
The collection of  degree $+1$ maps
\begin{equation*} \label{eq:adjoint:representation:algebra}
\begin{array}{rrcl}
\ad_k:& S^k(E) &\to&  \End(E) \\
 & \;x_1\odot\ldots \odot x_k & \mapsto &  \ad_{x_1 \odot \dots \odot x_k} := l_{k+1}\left( x_1,\ldots, x_k, \, \, \, -\,\,  \right) 
\end{array}, \quad k\geq 1,
\end{equation*}
 satisfies Equations \eqref{eq:def:representation}. {(Note that Equations \eqref{eq:def:representation} are equivalent to Equations \eqref{eq:def:symm:L:infty:algebra})}. So, this collection of maps defines a representation $\ad=\sum_k \ad_ k$ of the Lie $\infty$-algebra $E$ on $(E,l_1)$. 

\begin{defn}
The representation $\ad$  is called   the \textbf{adjoint representation} of the Lie $\infty$-algebra  $\left( E, M_E \equiv \set{l_k}_{k\in\Nn} \right)$. 
\end{defn}

Moreover, notice that for each $x\in S^i(E)$, {$i\geq 1$}, we may consider the  degree $|x|+1$ coderivation $\ad^D_x$  of $\bar S(E)$ defined by the family of linear maps
\begin{equation*}
\begin{array}{rrcl}
{(\ad_x)_k}:& S^k(E) &\to& E \\
 & e & \mapsto &  l_{i+k}(x,e), \quad {k\geq 1}.
\end{array}
\end{equation*}
So, we have a collection of degree $+1$ linear maps
\begin{equation} \label{eq:coderivation_ad}
\begin{array}{rrcl}
{\ad_i}:& S^i(E) &\to& \Coder(\bar S(E)) \\
 & x & \mapsto & \ad^D_x
\end{array}, \quad {i\geq 1,}
\end{equation}
and we set ${\bf ad}=\sum_i \ad_i$.

\begin{prop}
{The collection of degree $+1$ linear maps given by \eqref{eq:coderivation_ad} defines  a Lie $\infty$-morphism
$${\bf ad}: (E, \set{l_k}_{k\in\Nn}) \to \left(\Coder (\bar S(E))[1], \partial_{M_E}, \brr{\cdot , \cdot} \right)$$
}
from the Lie $\infty$-algebra $E$ to the symmetric DGLA $\Coder (\bar S(E))[1]$.
\end{prop}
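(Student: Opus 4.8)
The plan is to show that the collection $\{\ad_i\}_{i\geq 1}$ defined by \eqref{eq:coderivation_ad} satisfies the defining identity \eqref{eq:def:Lie:infty:morphism} for a Lie $\infty$-morphism into the symmetric DGLA $(\Coder(\bar S(E))[1], \partial_{M_E}, \brr{\cdot,\cdot})$. Since the target is a DGLA (only $l_1=\partial_{M_E}$ and $l_2=\brr{\cdot,\cdot}$ are nonzero), the right-hand side of \eqref{eq:def:Lie:infty:morphism} collapses: only the terms with $j=1$ (giving $\partial_{M_E}\ad_i(\cdots)$) and $j=2$ (giving $\frac12\brr{\ad_{k_1}(\cdots),\ad_{k_2}(\cdots)}$) survive. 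So what must be proved is, for each $n\geq 1$ and homogeneous $x_1,\dots,x_n\in E$,
\begin{align*}
&\sum_{\substack{k+l=n\\ \sigma\in Sh(k,l)}}\!\!\!\epsilon(\sigma)\,\ad_{1+l}\!\big(l_k(x_{\sigma(1)},\dots,x_{\sigma(k)}),x_{\sigma(k+1)},\dots,x_{\sigma(n)}\big)\\
&\quad=\partial_{M_E}\ad_n(x_1,\dots,x_n)+\tfrac12\!\!\sum_{\substack{k_1+k_2=n\\ \sigma\in Sh(k_1,k_2)}}\!\!\!\epsilon(\sigma)\,\brr{\ad_{k_1}(x_{\sigma(1)},\dots,x_{\sigma(k_1)}),\ad_{k_2}(x_{\sigma(k_1+1)},\dots,x_{\sigma(n)})}.
\end{align*}
The key observation is that both sides, when evaluated on an arbitrary $e\in\bar S(E)$ via Proposition~\ref{prop:isomorphism:families:coderivations} (it suffices to check equality after applying $p_E$, since both sides are coderivations of $\bar S(E)$), reduce to the generalized Jacobi identities \eqref{eq:def:symm:L:infty:algebra} for $E$. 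Concretely, unwinding the definitions: $\ad^D_x$ acts on $e\in S^k(E)$ producing $l_{|x|_{\mathrm{sym}}+k}(x,e)$ in the top component (and lower terms coming from the coderivation extension); $\partial_{M_E}Q = -M_E\circ Q+(-1)^{\deg Q}Q\circ M_E = -[M_E,Q]_c$; and the bracket $\brr{Q,P}$ is, up to sign, the graded commutator of coderivations, hence corresponds under the isomorphism to the Richardson–Nijenhuis bracket.

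The steps I would carry out, in order: (1) Recall that the isomorphism $\Coder(\bar S(E))\cong \mathrm{Hom}(\bar S(E),E)$ is a Lie algebra isomorphism onto $(\mathrm{Hom}(\bar S(E),E),\brr{\cdot,\cdot}_{RN})$, so that $\brr{\ad^D_x,\ad^D_y}$ corresponds to $\brr{(\ad_x)_\bullet,(\ad_y)_\bullet}_{RN}$; in particular it suffices to verify the displayed identity after composing with $p_E$. (2) Express $p_E\circ\partial_{M_E}(\ad^D_{x_1\odot\cdots\odot x_n})$ evaluated on $e$ explicitly: it equals $-\,l(M_E(x_1\odot\cdots\odot x_n)\odot e)$-type terms minus $l_{n+k}(x_1,\dots,x_n, l(e_{(1)})\odot e_{(2)})$-type terms, i.e. exactly the summands of \eqref{eq:def:symm:L:infty:algebra} in which the inner bracket $l_i$ does *not* take all of $x_1,\dots,x_n$ together. (3) Express the RN-bracket term: $\brr{(\ad_{x'})_\bullet,(\ad_{x''})_\bullet}_{RN}$ picks up the summands where the inner bracket mixes some $x$'s with some entries of $e$ — but after antisymmetrizing over the unshuffles of $x_1,\dots,x_n$ these reorganize, together with the factor $\frac12$, into precisely the "mixed" Jacobi summands. (4) Express the left-hand side $\ad_{1+l}(l_k(x_\sigma),x_\sigma,\dots)$ evaluated on $e$: this is $l_{1+l+k'}(l_k(x_{\sigma(1)},\dots),x_{\sigma(k+1)},\dots,e)$, i.e. the Jacobi summands in which the inner bracket $l_i$ takes *only* $x$-arguments (a subset of $x_1,\dots,x_n$) and nothing from $e$. (5) Add up (2)+(3) on one side and (4) on the other, and observe that the two collections of summands together exhaust all terms in the generalized Jacobi identity \eqref{eq:def:symm:L:infty:algebra} applied to the $(n+|e|_{\mathrm{count}})$ arguments $(x_1,\dots,x_n,e_1,\dots)$, with the correct Koszul signs; since that identity vanishes, the desired equation holds. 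One clean way to package steps (2)–(5) is to note that $\mathbf{ad}$ is nothing but the restriction to $\bar S(E)\hookrightarrow\bar S(E\oplus E)$-type reasoning of the identity $[M_E,M_E]_c=0$ itself: the map $x\mapsto \ad^D_x$ is, coalgebra-theoretically, $M_E$ with one "slot" frozen, and the Lie $\infty$-morphism equation for $\mathbf{ad}$ is the "polarization" of $M_E^2=0$ in the frozen variable.

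The main obstacle is bookkeeping the Koszul signs correctly while splitting the single Jacobi identity \eqref{eq:def:symm:L:infty:algebra} (for arguments drawn from $x_1,\dots,x_n$ together with the $e$-arguments) into the three groups "inner bracket uses only $x$'s", "inner bracket uses only $e$'s", and "inner bracket mixes", and then matching the third group — after the unshuffle-antisymmetrization over $x_1,\dots,x_n$ and the $\frac12$ — to the RN-bracket term; the degree shift $[1]$ in the target and the degree $|x|+1$ of $\ad^D_x$ must be tracked consistently. A cleaner route that sidesteps much of this is the abstract one: both $\mathbf{ad}$ and the trivial morphism fit into Voronov-style derived-bracket formalism on $\Coder(\bar S(E))$, and the morphism identity for $\mathbf{ad}$ is equivalent to $\partial_{M_E}$-closedness plus the graded-Jacobi identity for $\brr{\cdot,\cdot}$, both of which are already recorded consequences of $M_E^2=0$ and of $\Coder(\bar S(E))[1]$ being a DGLA (Example~\ref{example:DGLA_Coder}). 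I would present the computational proof but remark that it is the infinitesimal/frozen-variable version of $[M_E,M_E]_c=0$.
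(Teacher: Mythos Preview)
Your strategy is the same as the paper's: both reduce the Lie $\infty$-morphism identity for $\mathbf{ad}$ to the generalized Jacobi identity \eqref{eq:def:symm:L:infty:algebra} on arguments $(x_1,\dots,x_n,e_1,\dots,e_m)$. The paper executes this in Sweedler notation by expanding $M_E(x\odot e)$, applying $l$, and using $l\circ M_E=0$; this yields directly $\ad_{M_E(x)}=-[l,\ad_x]_{RN}-\tfrac12(-1)^{|x_{(1)}|}[\ad_{x_{(1)}},\ad_{x_{(2)}}]_{RN}$ without ever splitting the Jacobi sum by hand.

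One caution on your bookkeeping: the attribution of terms in your step~(2) is off. The quantity $p_E\circ\partial_{M_E}(\ad^D_x)(e)$ equals $-l(\ad^D_x(e))+(-1)^{|x|+1}\ad_x(M_E(e))$; the first summand contributes the Jacobi terms whose \emph{inner} bracket takes \emph{all} of $x_1,\dots,x_n$ together with at least one $e$-argument, and the second contributes those whose inner bracket takes only $e$-arguments. The terms you call ``$-l(M_E(x)\odot e)$-type'' (inner bracket on a subset of the $x$'s only) actually come from the \emph{left-hand side} $\ad_{M_E(x)}(e)$, not from $\partial_{M_E}$. With this corrected four-way split --- LHS: inner uses only $x$'s; $-l\circ\ad^D_x$: inner uses all $x$'s plus some $e$'s; $\ad_x\circ M_E$: inner uses only $e$'s; RN-term: inner uses a proper nonempty subset of $x$'s plus some $e$'s --- your steps~(3)--(5) go through and recover exactly the paper's argument.
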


\begin{proof}
For each $x\in S^i(E)$,
  let $\ad_x={\sum_k(\ad_x)_k}$ and set $l=\sum_k l_k$.

If $x\in \oplus_{i\geq 2}S^i(E)$  {and $e \in \bar S(E)$, we have
\begin{align*}
M_E(x\odot e)=& M_E(x) \odot e+ (-1)^{|x|}x \odot M_E(e)+ (-1)^{|e||x_{(2)}|}l(x_{(1)}, e)\odot x_{(2)} \\+
& l (x, e_{(1)})\odot e_{(2)}+ (-1)^{|e_{(1)}||x_{(2)}|}l(x_{(1)}, e_{(1)})\odot x_{(2)}\odot e_{(2)} + l(x, e)
\end{align*}
and so,
\begin{align*}
\ad_x (M_E(e))&=l(x,M_E(e))\\
&= (-1)^{|x|}\underbrace{l(M_E(x\odot e))}_{=0 \,\, \textrm{by} \,\, \eqref{eq:def:symm:L:infty:algebra}}- (-1)^{|x|} l (M_E(x), e)- (-1)^{|x|}l(\ad_x^D(e))\\
& \quad - (-1)^{|x_{(1)}|+|x_{(1)}||x_{(2)}| }l(x_{(2)},\ad_{x_{(1)}}^D(e))\\
&= \big(- (-1)^{|x|}\ad_{M_E(x)}- (-1)^{|x|}l \smalcirc \ad^D_{x}-(-1)^{|x_{(2)}|}\ad_{x_{(1)}}\smalcirc \ad^D_{x_{(2)}}\big)(e),
\end{align*}
which is equivalent to
\begin{equation*}
\ad_{M_E(x)}=-l\smalcirc \ad^D_x-(-1)^{|x|}\ad_x \smalcirc M_E - (-1)^{|x_{(2)}|}\ad_{x_{(1)}}\smalcirc \ad^D_{x_{(2)}}
\end{equation*}
or to
\begin{equation} \label{eq:ad:ME}
\ad_{M_E(x)}=-[l, \ad_x]_{_{RN}}-\frac12 (-1)^{|x_{(1)}|}[\ad_{x_{(1)}}, \ad_{x_{(2)}}]_{_{RN}}.
\end{equation}
Note that the coderivation defined by the second member of \eqref{eq:ad:ME} is
\begin{equation*}
[M_E, \ad^D_x]+\frac12 [\ad^D_{x_{(1)}}, \ad^D_{x_{(2)}}]
= \partial_{M_E}(\ad^D_x) + \frac12 [\ad^D_{x_{(1)}}, \ad^D_{x_{(2)}}].
\end{equation*}

If $x\in E$, a similar computation gives
\begin{equation} \label{eq:ad:l1}
\ad_{l_1(x)}=-l \smalcirc \ad^{D}_{x} - (-1)^{|x|}\ad_x\smalcirc M_E = -[l, \ad_x]_{_{RN}}.
\end{equation}
}

Equations \eqref{eq:ad:ME} and \eqref{eq:ad:l1} mean that the map {${\bf ad}: E \to \Coder (\bar S(E))[1]$} is a Lie $\infty$-morphism.
\end{proof}

\begin{defn} The linear map {${\bf ad}: E \to \Coder(\bar S(E))[1]$} is an action of the Lie $\infty$-algebra $E$ on itself, called the \textbf{adjoint action of $E$}.
\end{defn}


\section{\texorpdfstring{$\O$}--operators on a Lie \texorpdfstring{$\infty$}--algebra} \label{section3}

In this section we define $\O$-operators on a Lie $\infty$-algebra $E$ with respect to an action of $E$ on a Lie $\infty$-algebra $V$. This is the main notion of the paper.

\subsection{\texorpdfstring{$\O$}--operators with respect to a Lie \texorpdfstring{$\infty$}--action}

{Let $(E,M_E\equiv\set{l_k}_{k\geq 1})$ and $(V,M_V\equiv\set{m_k}_{k\geq 1})$ be Lie $\infty$-algebras and $\Phi:E\to \Coder(\bar S(V))[1]$ a Lie $\infty$-action of $E$ on $V$.
 Remember we are using Sweedler's notation: for each $v\in \bar S(V)$,
$$\Delta(v)=v_{(1)}\otimes v_{(2)}$$ and
$$\Delta^{2}(v)=(\mathrm{id} \otimes \Delta)\Delta(v)=( \Delta\otimes\mathrm{id})\Delta(v)=v_{(1)}\otimes v_{(2)}\otimes v_{(3)}.$$

Each degree zero linear map $T:\bar S(V)\to \bar S(E)$ defines  a degree $+1$ linear map $\ds \Phi^T: \bar S(V)\to \bar S(V)$ given by
\begin{align*}
   \Phi^T(v)&=0,\quad v\in V,\\
\Phi^T(v)&=\Phi_{T(v_{(1)})}\, v_{(2)},\quad v\in S^{\geq 2}(V).
\end{align*}

}

\begin{lem}\label{Lemma:deformed:coderivation:by:T}
The linear map $\ds \Phi^T: \bar S(V)\to \bar S(V)$ is a degree $+1$ coderivation of $\bar S(V)$ and is defined by the collection of linear maps
$\sum\Phi_{\bullet,\bullet}(T\otimes\mathrm{
id})\Delta$.
\end{lem}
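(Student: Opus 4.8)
The plan is to verify directly that $\Phi^T$ satisfies the coderivation identity $\Delta\smalcirc\Phi^T=(\Phi^T\otimes\mathrm{id}+\mathrm{id}\otimes\Phi^T)\smalcirc\Delta$, and then to identify, via Proposition~\ref{prop:isomorphism:families:coderivations}, the associated family of maps $\bar S(V)\to V$ by composing with the projection $p_V$. Since $\Phi^T$ is built from $\Phi_{T(v_{(1)})}$ acting on $v_{(2)}$, and each $\Phi_x$ is itself a coderivation of $\bar S(V)$, the computation is a matter of expanding the two coproducts and using coassociativity and cocommutativity.

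First I would record that for $v\in V$ the identity is trivial: $\Delta(v)=0$ and $\Delta^{(n)}$ vanishes on $S^{\leq n}(V)$, so both sides are zero. The content is in $v\in S^{\geq 2}(V)$. Here I would compute $\Delta(\Phi^T(v))=\Delta(\Phi_{T(v_{(1)})}\,v_{(2)})$. Because $\Phi_{T(v_{(1)})}$ is a coderivation of $\bar S(V)$, one has $\Delta(\Phi_{T(v_{(1)})}\,v_{(2)})=(\Phi_{T(v_{(1)})}\otimes\mathrm{id}+\mathrm{id}\otimes\Phi_{T(v_{(1)})})\Delta(v_{(2)})$. Then, applying coassociativity to write $v_{(1)}\otimes\Delta(v_{(2)})=v_{(1)}\otimes v_{(2)}\otimes v_{(3)}$ with appropriate Sweedler indexing, and using cocommutativity to move $v_{(1)}$ to whichever tensor leg $\Phi_{T(v_{(1)})}$ acts on, the expression splits as $\Phi_{T(v_{(1)})}v_{(2)}\otimes v_{(3)}+v_{(1)}\otimes\Phi_{T(v_{(2)})}v_{(3)}$ (up to Koszul signs, which cancel because $|v_{(1)}|$ matches the displacement of the coderivation $\Phi_{T(v_{(1)})}$ of degree $+1$). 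The two summands are precisely $(\Phi^T\otimes\mathrm{id})\Delta(v)$ and $(\mathrm{id}\otimes\Phi^T)\Delta(v)$, once one checks the boundary cases where $v_{(1)}$ or $v_{(2)}$ has length $1$ (so that $\Phi^T$ would vanish on it but $\Phi_\bullet$ still contributes, and these contributions are exactly the ``$+\,q(v)$'' diagonal term in Sweedler form $Q(v)=q(v_{(1)})\odot v_{(2)}+q(v)$). Care with this bookkeeping is the main obstacle: one must track which terms land in $S^1(V)$ versus $S^{\geq2}(V)$ and confirm the sign conventions from the Koszul rule and from the degree $+1$ shift on coderivations line up; this is routine but delicate.

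For the final assertion, by Proposition~\ref{prop:isomorphism:families:coderivations} the coderivation $\Phi^T$ is determined by $p_V\smalcirc\Phi^T:\bar S(V)\to V$. Applying $p_V$ to $\Phi^T(v)=\Phi_{T(v_{(1)})}v_{(2)}$ and using that $p_V\smalcirc\Phi_x$ is the family of maps $\Phi_{\bullet,\bullet}(x\otimes -)$ defining the coderivation $\Phi_x$ (Remark~\ref{rem:maps:definition:action}), together with $v\mapsto T(v_{(1)})\otimes v_{(2)}=(T\otimes\mathrm{id})\Delta(v)$, one reads off $p_V\smalcirc\Phi^T=\sum\Phi_{\bullet,\bullet}\smalcirc(T\otimes\mathrm{id})\smalcirc\Delta$, which is the stated collection of maps. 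Finally I would note the degree: $T$ has degree $0$, $\Delta$ degree $0$, and each $\Phi_{k,i}$ has degree $+1$, so $\Phi^T$ has degree $+1$ as claimed.
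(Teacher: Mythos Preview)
Your proposal is correct and follows essentially the same route as the paper: verify the coderivation identity directly using that each $\Phi_{T(v_{(1)})}$ is a coderivation, then invoke coassociativity and cocommutativity in Sweedler notation. The paper separates out the case $v\in S^2(V)$ explicitly (where both sides vanish for elementary reasons) before treating $S^{\geq 3}(V)$, whereas you treat $S^{\geq 2}(V)$ uniformly and defer that to a ``boundary case'' check; either way works. Your argument for the second assertion, identifying $p_V\smalcirc\Phi^T$ with $\sum\Phi_{\bullet,\bullet}(T\otimes\mathrm{id})\Delta$ via Proposition~\ref{prop:isomorphism:families:coderivations}, and your degree count are both fine and in fact spell out points the paper's proof leaves implicit.
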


\begin{proof}
For the linear map $\ds \Phi^T: \bar S(V)\to \bar S(V)$ to be a coderivation  it must  satisfy:
$$
\Delta\Phi^T(v)=\left(\Phi^T\otimes \mathrm{id} + \mathrm{id}\otimes \Phi^T\right)\Delta(v),\quad v\in \bar S(V).
$$

This equation is trivially satisfied for  $v\in V$.

For each $v=v_1\odot v_2\in S^2(V)$ we have
$\Phi^T(v)\in V$ and consequently, $\Delta\Phi^T(v)=0$. On the other hand, since ${\Phi^T}_{|V}=0$, we see that
$$\left(\Phi^T\otimes \mathrm{id} + \mathrm{id}\otimes \Phi^T\right)\Delta(v)=0$$
and the equation is satisfied in $S^2(V)$.

Now let  $v\in S^{\geq 3}(V)$, then
\begin{align*}
\Delta\Phi^T(v)&=\Delta \Phi_{T(v_{(1)})}v_{(2)}\\
&= \left(\Phi_{T(v_{(1)})}\otimes\mathrm{id} + \mathrm{id}\otimes \Phi_{T(v_{(1)})}\right)\Delta (v_{(2)}).
\end{align*}
The coassociativity of $\Delta$ ensures that 
\begin{align*}
\Delta\Phi^T(v)&= \Phi_{T(v_{(1)})}v_{(2)}\otimes v_{(3)} + (-1)^{(|v_{(1)}|+1)|v_{(2)}|}v_{(2)}\otimes \Phi_{T(v_{(1)})}v_{(3)}\\
&=\Phi_{T(v_{(1)})}v_{(2)}\otimes v_{(3)} + (-1)^{|v_{(1)}|} v_{(1)}\otimes \Phi_{T(v_{(2)})}v_{(3)}\\
&= \left(\Phi^T\otimes \mathrm{id} + \mathrm{id}\otimes \Phi^T\right)\Delta(v).
\end{align*}
\end{proof}

\begin{defn}
Let $(E, M_E \equiv \set{l_k}_{k\geq 1})$ and $(V, M_V \equiv \set{m_k}_{k\geq 1})$ be
Lie $\infty$-algebras and $\Phi:E\to \Coder (\bar S(V))[1]$ an action.
An \textbf{$\O$-operator} {on $E$ with respect to the action $\Phi$} is  a (degree $0$) morphism of coalgebras  $T:\bar S(V) \to \bar S(E)$ such that
\begin{equation}\label{def:O:operator}
{M_E\smalcirc T=T\smalcirc\left(\Phi^T+M_V\right).}
\end{equation}
\end{defn}

\begin{defn}  A \textbf{Rota Baxter operator (of weight $1$)} on a Lie $\infty$-algebra $( E, M_E \equiv \set{l_k}_{k\geq 1})$ is an $\O$-operator  with respect to the adjoint action.

\end{defn}

An $\O$-operator $T:\bar S(V)\to \bar S(E)$ with respect to an action $\Phi:E\to \Coder(\bar S(V))[1]$ of $(E,M_E\equiv \set{l_k}_{k\geq 1})$ on $(V,M_V\equiv\set{m_k}_{k\geq 1})$ is defined by a linear map ${t=\sum_{i}t_{i}}:\bar S(V)\to E$
satisfying:
\begin{itemize}
  \item[(i)] $\ds l_1( t_1(v))= t_1(m_1(v)), \quad v\in V$
  \item[(ii)] $l(T(v))=t\left(\Phi_{T(v_{(1)})}v_{(2)} +  m(v_{(1)})\odot v_{(2)}\right), \quad v\in \oplus_{i\geq 2}S^i(V).$
\end{itemize}
In particular, the $\O$-operator $T$ is a comorphism i.e., for each  $v \in  {S^n(V)}$, $n\geq 1$,
\begin{equation*}
T(v)= \sum_{k_1+ \ldots +k_r=n}\frac{1}{r!}\, t_{k_1}(v_{(k_{1})})\odot \ldots \odot t_{k_r}(v_{(k_r)}),
\end{equation*}
so, detailing (ii)  for $v=v_1\odot v_2$, we get

\begin{align*}
    l_1\Big(t_2(v_1, \,v_2)\Big) + l_2\Big(t_1(v_1),\, t_1(v_2)\Big) &=
    \,t_1\left(\Phi_{t_1(v_1)}v_2 + (-1)^{|v_1|\,|v_2|}\Phi_{t_1(v_2)}v_1 + m_2(v_1,v_2)\right)\\
    &+ t_2\Big(m_1(v_1), v_2)\Big) + (-1)^{|v_1|}t_2\Big(v_1,m_1(v_2)\Big).
\end{align*}
{Generally, for every $v=v_1\odot \ldots \odot v_n\in S^n(V)$, $n\geq 3$,  we have

{\small
\begin{eqnarray*} \label{eq:expression:T}
\lefteqn{\sum_{\begin{array}{c} \scriptstyle{k_1+\ldots +k_i=n}\\ \scriptstyle{\sigma \in Sh(k_1, \ldots, k_i)}\end{array}}
\frac{\epsilon(\sigma)}{i!}\, l_i\bigg(t_{k_{1}}(v_{\sigma(1)}, \dots, v_{\sigma(k_1)}), \ldots, t_{k_{i}}(v_{\sigma(k_1+ \ldots + k_{i-1}+1)}, \dots, v_{\sigma({n})})\bigg) }\\ 
 &=&\!\!\!\!\!\!\!\!\!\!\sum_{\begin{array}{c} \scriptstyle{k_1+\ldots +k_{i+2}=n}\\ 
\scriptstyle{\sigma \in Sh(k_1, \ldots, k_{i+2})}\end{array}}
 \!\!\!\!\!\!\!\!\!\!\frac{\epsilon(\sigma)}{i!}\, t_{1+k_{i+2}}\bigg(
 \Phi_{i,{k_{i+1}}}
 \bigg( 
 t_{k_{1}}(v_{\sigma(1)} \dots, v_{\sigma(k_1)})\odot \ldots \odot t_{k_{i}}(v_{\sigma(k_1+ \ldots + k_{i-1}+1)}, \dots, 
 v_{\sigma({ k_1+\ldots + k_i})}), \nonumber \\
 & &\qquad v_{\sigma(k_1+ \ldots + k_{i}+1)}
 \odot \dots\odot v_{\sigma({k_1+\ldots + k_{i+1}})}  \bigg),    v_{\sigma(k_1+ \ldots + k_{i+1}+1)}\odot \dots \odot v_{\sigma (n)}
 \bigg)  \\
 &&+ {\sum_{i=1}^{n}}\,\sum_{\sigma \in Sh(i,n-i)} \epsilon(\sigma)\, {t_{n-i+1}}\, \big(m_i(v_{\sigma(1)}, \dots, v_{\sigma(i)}), v_{\sigma(i+1)}, \ldots, v_{\sigma(n)}\big). \nonumber
\end{eqnarray*}}}

\begin{rem}
When   $M_V=0$
we are considering $V$ simply as a graded vector space, with no Lie $\infty$-algebra attached and an $\O$-operator must satisfy
$$
{M_E\smalcirc T=T\smalcirc\Phi^T.}
$$
In this case, the terms of {above equations} 
involving the brackets $m_i$ on $V$ vanish.
\end{rem}

\begin{rem}
When  $(E, \brr{\cdot,\cdot}_E)$ and $(V, \brr{\cdot,\cdot}_V)$ are  Lie  algebras, for degree reasons, a morphism $T=t_{1}$ must be a strict morphism.  Moreover, our definition coincides with the usual definition of $\O$-operator (of weight $1$) between Lie algebras  \cite{K} :

$$
\brr{t_{1}(v),t_{1}(w)}_{E}=t_{1}\left(\Phi_{t_{1}(v)}w - \Phi_{t_{1}(w)}v+\brr{v,w}_{V}\right), \quad v,w\in V.
$$
\end{rem}

\begin{rem}

When $(V, \d)$ is just a complex and the action $\Phi:E\to\Coder (\bar S(V)) [1]$ is induced by a  representation
$\rho:E\to \End(V)[1]$ we have that $\Phi(x)$ is the {(co)derivation}  defined by $\rho(x)$. In this case,  $\O$-operators with respect to $\Phi$ coincide with $\O$-operators with respect to $\rho$ 
 (or relative Rota Baxter operators) given in 
 \cite{LST2021}.
 \end{rem}

In \cite{LST2021} the authors define $\O$-operators with respect to   representations of Lie $\infty$-algebras. Any action   induces a representation and  $\O$-operators with respect to  an action are related with  $\O$-operators of with respect to the induced representation. We shall see that this relation is given by the comorphism $$I=\sum_{n\geq 1} {\mathrm{i}}_n:\bar S(V)\to \bar S(\bar S(V)),$$ defined by
the family of inclusion maps
$\ds \mathrm{i}_n:S^n(V)\hookrightarrow \bar{S}(V)$, $n\geq 1$. 

 Notice that any coderivation $D$ of $\bar S(V)$ induces a (co)derivation $D^{\d}$ of $\bar S(\bar S(V))$. The comorphism $I$ preserves these coderivations:
 
\begin{lem}\label{lemma:I:Lie:morphism}
Let $V$  be a graded vector space and $D$ a coderivation of $\bar S(V)$. 
The map $I:\bar S(V)\to \bar S(\bar S(V)) $ satisfies
$$
I\smalcirc D=D^{\d}\smalcirc I.
$$
\end{lem}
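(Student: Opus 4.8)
The plan is to verify the intertwining identity $I\smalcirc D = D^{\d}\smalcirc I$ by evaluating both sides on a homogeneous element $v = v_1\odot\ldots\odot v_n\in S^n(V)$ and comparing. Since $I$ is the comorphism associated with the family of inclusions $\mathrm{i}_n\colon S^n(V)\hookrightarrow\bar S(V)$, for $v\in S^n(V)$ we have $I(v) = \mathrm{i}_n(v)\in S^1(\bar S(V))\subset\bar S(\bar S(V))$ plus, in principle, higher terms built from the coproduct of $v$; but in fact, because $\mathrm{i}_n$ is nonzero only in a single homogeneous component, the comorphism formula $I(v)=\sum_{k\geq1}\frac1{k!}\,\mathrm{i}(v_{(1)})\odot\ldots\odot\mathrm{i}(v_{(k)})$ degenerates: applying $p_{\bar S(V)}\smalcirc I = \sum_n\mathrm{i}_n$ to $S^n(V)$ picks out the ``diagonal'' term where the block decomposition of $v$ is trivial, so $I(v) = \mathrm{i}_n(v) + (\text{terms in }S^{\geq2}(\bar S(V)))$, and these higher terms are governed by how the coproduct of $v$ refines into the various $S^{j}(V)$-pieces. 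So the first step is to pin down $I$ explicitly: $I(v_1\odot\ldots\odot v_n)=\sum_{k=1}^n\frac1{k!}\sum_{\sigma\in Sh(\ldots)}\epsilon(\sigma)\,(v_{\sigma(1)}\odot\ldots)\odot\ldots\odot(\ldots\odot v_{\sigma(n)})$, summing over all ordered partitions of $\{1,\ldots,n\}$ into $k$ nonempty blocks, with each block regarded as an element of $\bar S(V)$ and the outer $\odot$ being the product in $\bar S(\bar S(V))$.

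Next I would recall that $D^{\d}$ is the \emph{unique} coderivation of $\bar S(\bar S(V))$ whose corestriction to $\bar S(V)$ is $D\smalcirc p_{\bar S(V)}$ — this is exactly what ``$D$ induces a (co)derivation $D^{\d}$'' should mean, and it is the natural thing by Proposition~\ref{prop:isomorphism:families:coderivations} applied to the graded vector space $\bar S(V)$. Given this, the standard strategy for proving that a coalgebra map $I$ intertwines two coderivations is to check it \emph{after corestriction}: since both $I\smalcirc D$ and $D^{\d}\smalcirc I$ are maps $\bar S(V)\to\bar S(\bar S(V))$ and the target is cofree, and since $I\smalcirc D$ is automatically a morphism of the relevant comodule structures (a coderivation along a coalgebra morphism) while $D^{\d}\smalcirc I$ likewise is, it suffices to show $p_{\bar S(V)}\smalcirc I\smalcirc D = p_{\bar S(V)}\smalcirc D^{\d}\smalcirc I$. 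The left side is $(\sum_n\mathrm{i}_n)\smalcirc D = D$ (as a map into $\bar S(V)=S^1(\bar S(V))$, using that $D$ lands in $\bar S(V)$ and $\sum\mathrm{i}_n$ is just reassembly). The right side is $D\smalcirc p_{\bar S(V)}\smalcirc I = D\smalcirc(\sum_n\mathrm{i}_n) = D$. Hence the two corestrictions agree, and by the universal property both maps coincide.

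The one genuine point requiring care — and the step I expect to be the main obstacle — is justifying that it is \emph{legitimate} to conclude equality from equality of corestrictions. This works when both maps $\bar S(V)\to\bar S(\bar S(V))$ are coderivations over the \emph{same} coalgebra morphism $g\colon\bar S(V)\to\bar S(\bar S(V))$, in the sense that $\Delta_{\bar S(\bar S(V))}\smalcirc F = (F\otimes g + g\otimes F)\smalcirc\Delta_{\bar S(V)}$, because then $F$ is determined by $p_{\bar S(V)}\smalcirc F$ together with $g$ (a coderivation-along-a-morphism version of Proposition~\ref{prop:isomorphism:families:coderivations}). Here $g = I$: one checks that $I\smalcirc D$ is a coderivation along $I$ using that $I$ is a comorphism and $D$ is a coderivation, and that $D^{\d}\smalcirc I$ is a coderivation along $I$ using that $D^{\d}$ is a coderivation of $\bar S(\bar S(V))$ and $I$ is a comorphism. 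Both of these are short diagram chases with the defining identities $(I\otimes I)\Delta_{\bar S(V)}=\Delta_{\bar S(\bar S(V))}I$ and $\Delta_{\bar S(\bar S(V))}D^{\d}=(D^{\d}\otimes\mathrm{id}+\mathrm{id}\otimes D^{\d})\Delta_{\bar S(\bar S(V))}$. Once the ``same corestriction $+$ same comorphism $\Rightarrow$ equal'' uniqueness lemma is in place, the proof is immediate; alternatively, if one prefers to avoid invoking that lemma, one can instead grind out both sides directly on $v_1\odot\ldots\odot v_n$ using the explicit formula for $I$ above and the coderivation formula $D(x)=d(x_{(1)})\odot x_{(2)}+d(x)$ for $D$, matching the block-partition sums term by term, but that is the more laborious route.
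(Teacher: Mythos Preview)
Your argument is correct and takes a genuinely different route from the paper. The paper proceeds by direct computation: it writes out $I(v)=\sum_{k=1}^{n}\frac{1}{k!}\,v_{(1)}\cdot\ldots\cdot v_{(k)}$ explicitly, applies $D^{\d}$ as a derivation on the outer product to obtain $D^{\d}\smalcirc I(v)=D(v_{(1)})\cdot I(v_{(2)})$, and then expands $I\smalcirc D(v)$ using $D(v)=m_\bullet(v_{(1)})\odot v_{(2)}$ together with the comorphism property of $I$ to reach the same expression. Your approach instead exploits the cofree nature of $\bar S(\bar S(V))$: you observe that both $I\smalcirc D$ and $D^{\d}\smalcirc I$ are coderivations along the same coalgebra morphism $I$, and that such maps are determined by their corestriction to $\bar S(V)$; since $p_{\bar S(V)}\smalcirc I$ is the identity and $p_{\bar S(V)}\smalcirc D^{\d}=D\smalcirc p_{\bar S(V)}$, both corestrictions equal $D$. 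Your method is cleaner and avoids any Sweedler-notation bookkeeping, at the cost of invoking the ``coderivation along a comorphism'' uniqueness lemma (which the paper does not state, though it is standard). The paper's computation, while more hands-on, has the minor advantage of being entirely self-contained within the conventions already set up in Section~\ref{section1}. Your first paragraph, incidentally, is not needed for the argument and could be dropped; the proof proper begins once you identify $D^{\d}$ via its corestriction.
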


\begin{proof}
We will denote  by $\cdot$ the symmetric product in $\bar S(\bar S(V))$, to distinguish from the symmetric product $\odot$ in $\bar S(V)$. 

Let $v\in  S^n(V)$, $n\geq 1$, and denote by   
$\set{m_k}_{k\geq 1}$ the family of linear maps defining the coderivation $D$.
For  $v\in V$, we immediately have   $D^{\d}\smalcirc I(v)= D\smalcirc I(v)=I\smalcirc D(v)$.
For $v\in S^{\geq 2}(V)$ we have
\begin{eqnarray*}
   \lefteqn{ D^{\d}\smalcirc I(v)= D^{\d}\bigg(\sum_{k=1}^n \frac{1}{k!}v_{(1)}\cdot \ldots\cdot v_{(k)}\bigg)}\\
    &=&\sum_{k=1}^n \frac{1}{k!}\bigg(D(v_{(1)})\cdot v_{(2)} \cdot \ldots\cdot v_{(k)}+\ldots + (-1)^{|D|(|v_{(1)}|+\ldots+|v_{(k-1)}|)}v_{(1)} \cdot \ldots\cdot v_{(k-1)} \cdot D(v_{(k)})\bigg)\\
    &=&\sum_{k=1}^n \frac{1}{(k-1)!}D(v_{(1)})\cdot v_{(2)} \cdot \ldots\cdot v_{(k)}\\
    &=&D(v_{(1)})  \cdot I(v_{(2)}).
\end{eqnarray*}
On the other hand 
\begin{align*}
    I\smalcirc D(v) &=I(m_{\bullet}(v_{(1)})\odot v_{(2)})\\
    &=m_{\bullet}(v_{(1)})\cdot I(v_{(2)}) + (m_{\bullet}(v_{(1)})\odot v_{(2)})\cdot I(v_{(3)})  \\
   &=D(v_{(1)})  \cdot I(v_{(2)})
\end{align*}
and the result follows.
\end{proof}

\begin{rem}
In particular, if $D$ defines a Lie $\infty$-algebra structure on $V$, then $D^{\d}$ defines a Lie $\infty$-algebra structure on $\bar S(V)$ and $I$ is a Lie $\infty$-morphism.  
\end{rem}

\begin{prop}
Let $\Phi:E\to \Coder (\bar S(V))[1]$ be an action of the Lie $\infty$-algebra $\left( E, M_E \equiv \set{l_k}_{k\geq 1} \right)$ on the Lie $\infty$-algebra $\left(V, M_V \equiv \set{m_k}_{k\geq 1} \right)$ and $\tilde T:\bar S(\bar S(V))\to E$ be an $\O$-operator  with respect to the induced representation $\rho:E\to \End(\bar S(V))[1]$. Then
$T=\tilde T \smalcirc I$ is an $\O$-operator with respect to the action $\Phi$.
\end{prop}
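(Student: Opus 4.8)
The plan is to translate the statement $M_E \smalcirc T = T \smalcirc (\Phi^T + M_V)$ for $T = \tilde T \smalcirc I$ into the defining equation for $\tilde T$ as an $\O$-operator with respect to $\rho$, using Lemma~\ref{lemma:I:Lie:morphism} to move coderivations across $I$. Recall that $\tilde T : \bar S(\bar S(V)) \to \bar S(E)$ being an $\O$-operator with respect to $\rho$ means $M_E \smalcirc \tilde T = \tilde T \smalcirc (\rho^{\tilde T} + D)$, where $D = M_V^{\d}$ is the coderivation of $\bar S(\bar S(V))$ induced by $M_V$, and $\rho^{\tilde T}$ is the coderivation of $\bar S(\bar S(V))$ built from $\rho$ and $\tilde T$ exactly as in the definition preceding Lemma~\ref{Lemma:deformed:coderivation:by:T}, but with the roles of $V$, $E$ played by $\bar S(V)$, $E$. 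So I would first write down both sides of \eqref{def:O:operator} with $T = \tilde T \smalcirc I$ substituted in.

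First I would compute $M_E \smalcirc T = M_E \smalcirc \tilde T \smalcirc I = \tilde T \smalcirc (\rho^{\tilde T} + M_V^{\d}) \smalcirc I$, using that $\tilde T$ is an $\O$-operator with respect to $\rho$. Next, by Lemma~\ref{lemma:I:Lie:morphism} applied to the coderivation $D = M_V$ of $\bar S(V)$, we have $M_V^{\d} \smalcirc I = I \smalcirc M_V$, so the $M_V^{\d}$-term becomes $\tilde T \smalcirc I \smalcirc M_V = T \smalcirc M_V$, which matches the $M_V$-summand on the right-hand side of \eqref{def:O:operator}. Thus the proof reduces to the single identity
\begin{equation*}
\tilde T \smalcirc \rho^{\tilde T} \smalcirc I = \tilde T \smalcirc I \smalcirc \Phi^T = T \smalcirc \Phi^T,
\end{equation*}
and it suffices to prove the stronger coalgebra-level statement $\rho^{\tilde T} \smalcirc I = I \smalcirc \Phi^T$. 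Since both sides are coderivations along the comorphism $I$ (one checks $\rho^{\tilde T}$ is a coderivation of $\bar S(\bar S(V))$ by Lemma~\ref{Lemma:deformed:coderivation:by:T}, $\Phi^T$ a coderivation of $\bar S(V)$, and $I$ a comorphism), it is enough to verify the identity after projecting to the cogenerators $\bar S(V)$, i.e.\ $p_{\bar S(V)} \smalcirc \rho^{\tilde T} \smalcirc I = \Phi^T$ — here I would use that $p_{\bar S(V)} \smalcirc I$ is the inclusion-collapse that picks out the $S^1(\bar S(V))$-component, which is $\mathrm{id}$ on each $S^n(V) \subset \bar S(V)$.

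The heart of the argument — and the step I expect to be the main obstacle — is unwinding the definition of $\rho^{\tilde T}$ and checking that, evaluated on $I(v)$ for $v \in S^{\geq 2}(V)$, it reproduces $\Phi^T(v) = \Phi_{T(v_{(1)})} v_{(2)}$. This requires: (a) expanding $I(v) = \sum_k \frac{1}{k!} v_{(1)} \cdot \ldots \cdot v_{(k)}$ via iterated coproducts; (b) recognizing that $\rho^{\tilde T}$ on such an element is $\sum \rho_{\tilde T((\cdot)_{(1)})}\big((\cdot)_{(2)}\big)$ with the coproduct now taken in $\bar S(\bar S(V))$; (c) using that $\rho$ is the \emph{linear} representation on $\bar S(V)$ determined by $\Phi$, so $\rho_{\tilde T(w)}$ acting on a single cogenerator $u \in \bar S(V)$ is $\Phi_{\tilde T(w)}(u)$ — and crucially that when $w$ itself is the image under $I$ of a sub-block of $v$, the comorphism property of $\tilde T$ together with $T = \tilde T \smalcirc I$ collapses $\tilde T(w)$ down to the appropriate $T(v_{(1)})$; and (d) a bookkeeping check that the combinatorial factors $\frac{1}{k!}$ from $I$ and the coproduct multiplicities in $\bar S(\bar S(V))$ conspire with the Koszul signs to leave exactly one surviving term $\Phi_{T(v_{(1)})} v_{(2)}$. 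The sign tracking in (d) and the precise matching of the two coproduct structures (in $\bar S(V)$ versus $\bar S(\bar S(V))$) across $I$ is the delicate part; Lemma~\ref{lemma:I:Lie:morphism}'s proof already models the pattern of how such collapses work, and I would lean on that computation as a template.
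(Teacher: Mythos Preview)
Your strategy is the paper's: write $M_E\smalcirc T=\tilde T\smalcirc(\rho^{\tilde T}+M_V^{\d})\smalcirc I$, peel off the $M_V$-term via Lemma~\ref{lemma:I:Lie:morphism}, and then match the remaining term with $T\smalcirc\Phi^T$. The only difference is that you expect this last step to require the combinatorial unwinding sketched in (a)--(d), whereas the paper dispatches it in two lines. The shortcut you are missing is this: since $I$ is a comorphism, the coproduct of $I(v)$ in $\bar S(\bar S(V))$ is $(I\otimes I)\Delta(v)$, so
\[
\rho^{\tilde T}(I(v))=\Phi^{\d}_{\tilde T(I(v_{(1)}))}\,I(v_{(2)})=\Phi^{\d}_{T(v_{(1)})}\,I(v_{(2)}).
\]
Now apply Lemma~\ref{lemma:I:Lie:morphism} a \emph{second} time, with the coderivation $D=\Phi_{T(v_{(1)})}$ of $\bar S(V)$, to obtain $\Phi^{\d}_{T(v_{(1)})}\,I(v_{(2)})=I\big(\Phi_{T(v_{(1)})}v_{(2)}\big)=I(\Phi^T(v))$. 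That is exactly the identity $\rho^{\tilde T}\smalcirc I=I\smalcirc\Phi^T$ you wanted, with no sign or factorial bookkeeping beyond what the lemma already packages. (The remaining point, that $T=\tilde T\smalcirc I$ is itself a comorphism, is immediate since both factors are.)
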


\begin{proof}
For each  $x\in \bar S(E)$, let us denote by
$$\Phi_x^{\d}:=\Phi(x)^{\d}=\rho(x)^{\d},
$$ the (co)derivation of $\bar S(\bar S(V))$ defined by $\rho(x)$. 

Let $\tilde T$ be an $\O$-operator with respect to the induced representation.
This means that
$$
M_E\smalcirc \tilde T(w)=\tilde T\Big(\Phi_{T(w_{(1)})}^{\d} w_{(2)} + {M_V}^{\d}(w)\Big), \quad w\in \bar S(\bar S(V)).
$$
Then,   
for each $w=I(v)$, $v\in\bar S(V)$, we have:
\begin{align*}
    M_E\smalcirc \tilde T(I(v))&= 
    \tilde T\Big(\Phi_{\tilde T(I(v)_{(1)})}^{\d} I(v)_{(2)} + {M_V}^{\d}\smalcirc I(v)\Big).
\end{align*}
Using  the fact that $I$ is a comorphism and Lemma \ref{lemma:I:Lie:morphism}, we rewrite last equation as
\begin{align*}
M_E\smalcirc T(v)&=
 \tilde T\Big(\Phi_{\tilde T(I(v_{(1)}))}^{\d} I(v_{(2)}) + {M_V}^{\d}\smalcirc I(v)\Big)\\
&=\tilde T \Big(I\smalcirc\Phi_{T(v_{(1)})} v_{(2)} + I\smalcirc {M_V}(v)\Big) \\
&=T \Big(\Phi_{T(v_{(1)})} v_{(2)} + {M_V}(v)\Big).
\end{align*}
Taking into account this equation and that $T$ is a comorphism, because is the composition of two comorphisms,  the result follows.
\end{proof}

\begin{prop}
Let $T$ be an $\O$-operator on $\left( E, M_E \equiv \set{l_k}_{k\geq 1} \right)$ with respect to a Lie $\infty$-action $\Phi:E\to \Coder (\bar S(V))[1]$ on $\left( V, M_V \equiv \set{m_k}_{k\geq 1} \right)$. Then, $V$ has a new Lie $\infty$-algebra structure {$$M_{V^{T}}= \Phi^T + M_V$$} and $T:(V, M_{V^{T}})\to (E, M_E)$  is a Lie $\infty$-morphism.
\end{prop}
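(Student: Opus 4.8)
The plan is to prove the two assertions separately. The second one---that $T$ is a Lie $\infty$-morphism from $(V,M_{V^{T}})$ to $(E,M_E)$---comes for free once the first is established: under the comorphism description of Lie $\infty$-morphisms recalled at the end of Section~\ref{section1}, it means exactly $T\smalcirc M_{V^{T}}=M_E\smalcirc T$, and since $M_{V^{T}}=\Phi^T+M_V$ this is verbatim the defining equation \eqref{def:O:operator} of an $\O$-operator. So the whole content is to show that $M_{V^{T}}$ is a Lie $\infty$-algebra structure on $V$. That $M_{V^{T}}$ is a degree $+1$ coderivation of $\bar S(V)$ is immediate: $\Phi^T$ is one by Lemma~\ref{Lemma:deformed:coderivation:by:T}, $M_V$ is one by hypothesis, and coderivations form a vector space; so $M_{V^{T}}$ is the coderivation whose cogenerator is $\sum\Phi_{\bullet,\bullet}(T\otimes\mathrm{id})\Delta+\sum_k m_k$, and the only thing left to check is $M_{V^{T}}\smalcirc M_{V^{T}}=0$. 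I would first note the cheap half: applying $M_E$ on the left of \eqref{def:O:operator} and using $M_E^{2}=0$ gives $T\smalcirc M_{V^{T}}\smalcirc M_{V^{T}}=M_E^{2}\smalcirc T=0$; but $T$ need not be injective, so this is not conclusive, and one must show directly that the cogenerator of the (degree $+2$) coderivation $M_{V^{T}}\smalcirc M_{V^{T}}$ vanishes.

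For this I would expand $M_{V^{T}}\smalcirc M_{V^{T}}=(\Phi^T)^{2}+\Phi^T\smalcirc M_V+M_V\smalcirc\Phi^T$ (using $M_V^{2}=0$) and evaluate on a homogeneous $v\in\bar S(V)$. Using the coderivation property of each $\Phi_x$ together with the coassociativity of $\Delta$, $(\Phi^T)^{2}(v)$ unwinds into terms of two shapes, $\Phi_{T(v_{(1)})}\!\left(\Phi_{T(v_{(2)})}v_{(3)}\right)$ and $\Phi_{\,T(\Phi_{T(v_{(1)})}v_{(2)})}\,v_{(3)}$. In the second shape I would rewrite the inner argument using the $\O$-operator equation in the form $T\smalcirc\Phi^T=M_E\smalcirc T-T\smalcirc M_V$, turning it into $M_E(T(\cdots))-T(M_V(\cdots))$, and then invoke the Lie $\infty$-morphism identity for $\Phi$ that already appeared in the proof of Proposition~\ref{prop:lie:infty:algebra:E+V},
$$
\Phi_{M_E(x)}=-M_V\smalcirc\Phi_x-(-1)^{|x|}\Phi_x\smalcirc M_V+\tfrac12[\Phi_{x_{(1)}},\Phi_{x_{(2)}}],
$$
together with $M_V^{2}=0$. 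The commutator contributions $\tfrac12[\Phi_{x_{(1)}},\Phi_{x_{(2)}}]$ produced this way are designed to cancel the ``$\Phi_{T(v_{(1)})}\Phi_{T(v_{(2)})}v_{(3)}$'' terms, while the $M_V\smalcirc\Phi_x$ and $\Phi_x\smalcirc M_V$ terms, together with the leftover $T\smalcirc M_V$ piece, cancel $\Phi^T\smalcirc M_V+M_V\smalcirc\Phi^T$. I expect this bookkeeping---keeping track of Koszul signs and of which Sweedler legs land where so that the cancellation is exact---to be the only real obstacle; everything else is formal.

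As a conceptually cleaner alternative (with comparable bookkeeping) I would argue via the ``graph''. Let $\Gamma_T\colon\bar S(V)\to\bar S(E\oplus V)$ be the comorphism whose cogenerator is $v\mapsto(t_1(v),v)$ on $V$ and $v\mapsto(t_k(v),0)$ on $S^k(V)$ for $k\ge 2$. It is a section of the canonical projection $\bar S(E\oplus V)\to\bar S(V)$, hence injective, and it satisfies $\pi_E\smalcirc\Gamma_T=T$ for the projection $\pi_E\colon\bar S(E\oplus V)\to\bar S(E)$. Using that $\pi_E$ intertwines $M_{E\oplus V}$ (the structure $M_E+\Upsilon+M_V$ of Proposition~\ref{prop:lie:infty:algebra:E+V}) with $M_E$, and comparing the cogenerators of $\Gamma_T\smalcirc M_{V^{T}}$ and $M_{E\oplus V}\smalcirc\Gamma_T$ on the $E$- and $V$-components of $E\oplus V$ (the comparison on the $V$-component uses that the cogenerator of $\Phi^T$ is recovered from $\Upsilon$ applied to $\Gamma_T$), one shows that $T$ is an $\O$-operator if and only if $\Gamma_T$ is a Lie $\infty$-morphism $(V,M_{V^{T}})\to(E\oplus V,M_{E\oplus V})$, i.e.\ $\Gamma_T\smalcirc M_{V^{T}}=M_{E\oplus V}\smalcirc\Gamma_T$. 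Then $\Gamma_T\smalcirc M_{V^{T}}\smalcirc M_{V^{T}}=M_{E\oplus V}^{2}\smalcirc\Gamma_T=0$, and injectivity of $\Gamma_T$ forces $M_{V^{T}}\smalcirc M_{V^{T}}=0$, as required.
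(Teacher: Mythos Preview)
Your primary approach is essentially the paper's own proof: expand $M_{V^T}^2=(\Phi^T)^2+\Phi^T\smalcirc M_V+M_V\smalcirc\Phi^T$, split $(\Phi^T)^2$ into the two Sweedler shapes you name, and use the $\O$-operator identity together with the Lie $\infty$-morphism equation for $\Phi$ to obtain the cancellation; the paper streamlines the bookkeeping by computing $\Phi_{M_E T(v_{(1)})}v_{(2)}$ in two ways (once from the action property, once from the $\O$-operator property) and equating the results, which packages exactly the substitutions you describe. Your observation that the ``$T$ is a Lie $\infty$-morphism'' clause is literally \eqref{def:O:operator} is also the paper's.

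Your graph alternative via $\Gamma_T$ is a genuinely different route not taken in the paper. It trades the explicit Sweedler cancellation for the single identity $\Gamma_T\smalcirc M_{V^T}=M_{E\oplus V}\smalcirc\Gamma_T$ and the injectivity of $\Gamma_T$; conceptually this is cleaner and makes transparent why $M_{V^T}$ inherits integrability from $M_{E\oplus V}$, at the cost of having to verify that intertwining identity componentwise (the $E$-component reproduces \eqref{def:O:operator} and the $V$-component reproduces the cogenerator of $M_{V^T}$ from $\Upsilon$), which is bookkeeping of roughly the same weight as the direct computation.
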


\begin{proof}
By Lemma \ref{Lemma:deformed:coderivation:by:T} we know $\Phi^T$ is a degree $+1$ coderivation of $\bar S(V)$ hence so is $M_{V^T}$.

Since $\Phi$ is an action, {so that $\Phi \smalcirc M_E= M_{\textrm{Coder}(\bar S(V))[1]} \smalcirc \Phi$,} and $T$ is a comorphism, we have, for each $v\in \bar S(V)$,
\begin{eqnarray}
\label{equation:first:O:operators}
\Phi_{M_{E}T(v_{(1)})}v_{(2)}&=& -M_{V} \Phi_{T(v_{(1)})}v_{(2)} - (-1)^{|v_{(1)}|} \Phi_{T(v_{(1)})} M_{V}( v_{(2)}) \\
&&\quad + (-1)^{|v_{(1)}|+1} \Phi_{T(v_{(1)})}\Phi_{T(v_{(2)})} v_{(3)}.\nonumber
\end{eqnarray}

On the other hand, $T$ is an $\O$-operator: $$M_{E}\smalcirc T(v)=T\smalcirc \Phi_{T(v_{(1)})}v_{(2)} + T\smalcirc M_{V}(v)$$ and this yields
\begin{eqnarray}\label{equation:second:O:operators}
\Phi_{M_{E}T(v_{(1)})}v_{(2)}=\Phi_{T\Phi_{T(v_{(1)})}v_{(2)} } v_{(3)} + \Phi_{TM_{V}(v_{(1)})}v_{(2)}.
\end{eqnarray}

Moreover,
due to the fact that both $\Phi^T$ and $M_V$ are coderivations and $M_V^2=0$, we have
\begin{align*}
M_{V^{T}}^{2} (v)&= (\Phi^T)^2(v)+\Phi^T\smalcirc M_V(v) + M_V\smalcirc \Phi^T(v)\\
&= \Phi_{T(\Phi_{T(v_{(1)})}v_{(2)})} v_{(3)} + (-1)^{|v_{(1)}|} \Phi_{T(v_{(1)})}\Phi_{T(v_{(2)})} v_{(3)} \\
&\quad +
\Phi_{TM_{V}(v_{(1)})}v_{(2)} +  (-1)^{|v_{(1)}|}
\Phi_{T(v_{(1))}}M_{V}(v_{(2)} ) + M_{V}(\Phi_{T(v_{(1)})}v_{(2)}).
\end{align*}
Taking into account Equations (\ref{equation:first:O:operators}) and (\ref{equation:second:O:operators}) we conclude $\ds M_{V^T}^2=0$. Therefore, 
$M_{V^{T}}$ defines a Lie $\infty$-algebra structure on $V$ and Equation (\ref{def:O:operator}) means that $T: \bar S(V) \to \bar S(E)$ is a Lie $\infty$-morphism between the Lie $\infty$-algebras $(V, M_{V^{T}})$ and $(E, M_E)$.
\end{proof}

The  brackets of the Lie $\infty$-algebra structure on $V$ defined by the coderivation $M_{V^{T}}$ are given by
$$m_1^T(v)= m_1(v)$$
and, for $n\geq 2$,
\begin{align*}
&m_n^T(v_1, \ldots, v_n)= m_n(v_1, \ldots v_n)+ \, \sum_{\begin{array}{c} \scriptstyle{k_1+\ldots +k_i=j}\\\scriptstyle{1\leq j \leq n-1}\end{array}}\sum_{\sigma \in Sh(k_1, \ldots, k_i,n-j)} \epsilon(\sigma)\, \frac1{n!}\\
&\Phi_{i,n-{j}}\left( t_{k_1}(v_{\sigma(1)}, \ldots, v_{\sigma(k_1)})\odot \ldots\odot t_{k_i}(v_{\sigma(k_1 + \dots + k_{i-1}+1)}, \dots , v_{\sigma(j)}), v_{\sigma(j+1)}\odot \dots\odot v_{\sigma(n)}  \right),
\end{align*}
with $\Phi_{i,n-{j}}\, , i\geq 1,$ the linear maps determined by the action $\Phi$ (see  \eqref{eq:linear:maps:representation}).

\

\paragraph{\textbf{$\O$-operators for the coadjoint representation}}
Let $(E, M_E\equiv\set{ l_k}_{k\geq 1})$ be a finite dimensional Lie $\infty$-algebra. 
Next, we consider the dual of the adjoint representation of $E$ (see \eqref{eq:dual:representation}), called the coadjoint representation.

\begin{defn}
The \textbf{coadjoint representation} of $E$,
$\ds
\ad^{*}:E \to \End (E^{*})[1]
$, is defined by 
\begin{equation*}
\eval{\ad^{*}_{x}(\al),v}=-(-1)^{| \al |(|x|+1)}\eval{\al, \ad_{x}v},\quad v\in E,\, x\in \bar S(E),\, \al\in E^{*}.
\end{equation*}
\end{defn}
Notice that $E^*$ is equipped with the differential $l_{1}^{*}$ (see \eqref{eq:dual:map}). 

\

An $\O$-operator on $E$  with respect to the coadjoint representation $\ad^*:E\to \End{(E^*)}[1]$ is a coalgebra morphism $T:\bar S (E^{*})\to \bar S(E)$  given by a collection of maps $t=\sum_{i} t_{i}:\bar S(E^{*})\to E$ satisfying
{
\begin{eqnarray}\label{equation:Ooperator:coadjoint}
l(T(\al))&=&\!\!\!\!\!\!\!\!\!\!\!\!\sum_{\begin{array}{c} \scriptstyle{1\leq i\leq n-1}\\\scriptstyle{\sigma\in Sh(i,n-i)}\end{array}}\!\!\!\!\!\! \varepsilon(\sigma)\,t_{n-i+1}(\ad^{*}_{T( \al_{\sigma(1)}\odot \ldots\odot\al_{\sigma(i)})} \al_{\sigma(i+1)}, \al_{\sigma(i+2)}, \ldots,
\al_{\sigma(n)})  \nonumber \\
&&+ \sum_{{i=1}}^{n} (-1)^{|\al_{1}| + \dots + |\al_{i-1}|}t_n(\al_{1},\ldots, l_{1}^{*}\al_{i}, \ldots,\al_{n}),
\end{eqnarray}
}
for all $\al=\al_{1}\odot\ldots\odot \al_{n} \in  S^{n}(E^{*})$, $n\geq 1$. 

We say that $T$ is {\bf symmetric} if $$\eval{\beta, t_{n}(\al_{1},\ldots,\al_{n})}=(-1)^{|\al||\beta|+|\al_{n}|(|\al_{1}|+\ldots +|\al_{n-1}|)}\eval{\al_{n}, t_{n}(\al_{1},\ldots,\al_{n-1},\beta)}, $$ for all
$\al_1,\ldots, \al_n, \beta\in E^{*}$ and $n\geq 1$.

When $T$ is invertible, its inverse $T^{-1}:\bar S (E)\to \bar S(E^{*})$, given by $t^{-1}=\sum_{n}t^{-1}_{n}$, is also symmetric:
$$\eval{ t^{-1}_{n}(x_{1},\ldots, x_{n}), y}=(-1)^{|y||x_{n}|}\eval{t^{-1}_{n}(x_{1},\ldots, x_{n-1},y), x_{n}},$$
for every $ x_1,\ldots x_n, y\in E$, $n\geq 1$.

One should notice that $t^{-1}_{n}$ is \textbf{not} the inverse map of $t_{n}$. It simply denotes the $n$-component of the inverse $T^{-1}$ of $T$.

For each $n\geq 1$, let 
$\omega^{(n)}\in \otimes^n E^*$ be defined by $\omega^{(1)}=0$ and
$$\eval{\omega^{(n)}, x_{1}\otimes \ldots \otimes x_{n}}=\eval{ t^{-1}_{n-1}(x_{1}, \ldots, x_{n-1}), x_{n}}, \quad x_1,\dots, x_n\in E.$$

 The symmetry of $T^{-1}$ guarantees that
$\omega=\sum_{n\geq 1} \omega^{(n)}$ is an element of
 $\bar S(E^{*})$. 

\begin{prop}
Let $T:\bar S(E^{*})\to \bar S(E)$  be an invertible symmetric comorphism. The linear map $T$ is an $\O$-operator with respect to the coadjoint representation  if and only if $\omega\in \oplus_{n\geq 2} S^{n}(E^{*})$, given by
$$
\eval{\omega, x_{1} \odot\ldots\odot x_{k+1}}=\eval{t_{k}^{-1}(x_{1},\ldots, x_{k}), x_{k+1}}, \quad x_{1}, \ldots, x_{k+1}\in E,\, k\geq 1,
$$
is a cocycle for the Lie $\infty$-algebra  cohomology.
\end{prop}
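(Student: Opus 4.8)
The plan is to translate the $\O$-operator condition \eqref{equation:Ooperator:coadjoint} for $T$ into a statement about $\omega$ via the defining relation $\eval{t_k^{-1}(x_1,\dots,x_k),x_{k+1}} = \eval{\omega, x_1\odot\dots\odot x_{k+1}}$, and then recognize the resulting identity as precisely the cocycle condition $\d_*\omega = 0$ for the Lie $\infty$-algebra cohomology of $E$, where $\d_*$ is the dual of the coderivation $M_E$ on $\bar S(E^*)$. The natural strategy is to work with the pairing between $\bar S(E)$ and $\bar S(E^*)$: apply both sides of \eqref{equation:Ooperator:coadjoint} against a test element and then substitute $\al_i = t^{-1}(\text{something})$, i.e. evaluate the $\O$-operator condition ``along the graph of $T^{-1}$''. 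Concretely, since $T$ is invertible, every $\al\in\bar S(E^*)$ is of the form related to $T^{-1}$ of some $x\in\bar S(E)$; pushing \eqref{equation:Ooperator:coadjoint} through $T^{-1}$ and using that $T$ is a comorphism (so $T^{-1}$ is too) should convert the $t$'s and $\ad^*$'s into $l$'s and pairings, producing an identity purely among the $\omega^{(n)}$.

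First I would set up notation cleanly: recall that $\d_*$ on $\bar S(E^*)$ is dual to $M_E = \sum_k l_k$ on $\bar S(E)$, so $\eval{\d_*\omega, x} = \pm\eval{\omega, M_E(x)}$ for $x\in\bar S(E)$, and unwind what $M_E(x)$ looks like in Sweedler notation, $M_E(x) = l(x_{(1)})\odot x_{(2)} + l(x)$, as recorded after Proposition~\ref{prop:isomorphism:families:coderivations}. Then the cocycle condition $\d_*\omega=0$ becomes: for all $x = x_1\odot\dots\odot x_{k+1}$, a signed sum over unshuffles of $\eval{\omega, l_i(x_{\sigma(1)},\dots)\odot x_{\sigma(i+1)}\odot\dots}$ vanishes. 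Using $\eval{\omega, y_1\odot\dots\odot y_{m+1}} = \eval{t_m^{-1}(y_1,\dots,y_m), y_{m+1}}$ and the symmetry of $T^{-1}$ to freely move the ``last'' slot, this sum should be rearranged into two groups of terms: one involving $l_1^*$ (coming from the $l_1$ contributions, after dualizing) and one involving $\ad^*$ (coming from the higher $l_i$, $i\ge 2$, after using the definition $\eval{\ad^*_x(\al),v} = -(-1)^{|\al|(|x|+1)}\eval{\al,\ad_x v}$ and $\ad_{x_1\odot\dots\odot x_k} = l_{k+1}(x_1,\dots,x_k,-)$). The claim is that this rearranged identity is exactly \eqref{equation:Ooperator:coadjoint} read through $T^{-1}$, i.e. the two are equivalent term by term once the substitution $\al_i \leftrightarrow t^{-1}(x_i)$ and the comorphism property of $T$ (to handle $T(\al_{\sigma(1)}\odot\dots\odot\al_{\sigma(i)})$ versus individual $t$'s) are used.

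The key steps, in order: (1) write out $\d_*\omega = 0$ explicitly as a pairing identity over $S^{k+1}(E)$; (2) substitute the defining formula for $\omega$ in terms of $t^{-1}$, and use the symmetry of $T^{-1}$ to normalize which argument plays the role of the paired vector; (3) on the other side, apply $T^{-1}$ to both sides of the $\O$-operator equation \eqref{equation:Ooperator:coadjoint}, using that $T^{-1}\smalcirc M_E = (\Phi^T + M_V)\smalcirc T^{-1}$ type relations (here with $\ad^*$ in place of a general action and $l_1^*$ in place of $M_V$) and the comorphism property to split $T$ of a product into $\odot$ of $t$'s; (4) match the $l_1$-terms on each side (these give the $l_1^*$ contributions) and the $l_{\ge 2}$-terms (these give the $\ad^*$ contributions), checking that the Koszul signs agree. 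The verification that signs match across steps (2)–(4) — in particular reconciling the sign $(-1)^{|\al|(|x|+1)}$ in the definition of $\ad^*$, the décalage-type signs in $M_E$, the signs in the symmetry condition for $T^{-1}$, and the unshuffle signs $\epsilon(\sigma)$ — is the main obstacle; everything else is bookkeeping. I would handle the signs by first checking the Lie-algebra ($l_n = 0$ for $n\neq 2$) and DGLA special cases to fix conventions, then pattern-matching term types rather than tracking every sign from scratch. The converse direction (cocycle $\Rightarrow$ $\O$-operator) follows by reading the same chain of equivalences backwards, which is why phrasing the whole argument as a sequence of genuine ``if and only if'' manipulations rather than one-directional implications is worthwhile.
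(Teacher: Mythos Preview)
Your proposal is correct and follows essentially the same route as the paper: rewrite the $\O$-operator condition through $T^{-1}$ to obtain $t^{-1}M_E(x)=\ad^*_{x_{(1)}}t^{-1}(x_{(2)})+l_1^*t^{-1}(x)$, then compute $\eval{\omega,M_E(x\odot y)}$ using the definition of $\omega$ in terms of $t^{-1}$ and match terms. The paper's execution is more compact than your outline---it works directly in Sweedler notation and pairs against a single auxiliary element $y$ rather than expanding into full unshuffle sums---but the strategy and the key steps (push through $T^{-1}$, dualize $\ad$ and $l_1$, compare with the pairing form of $\d_*\omega=0$) are the same.
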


\begin{proof}
When $T$ is invertible, Equation  (\ref{equation:Ooperator:coadjoint}) is equivalent to equations
$$t_1^{-1}l_{1}(x)=l_{1}^{*}t_1^{-1}(x), \quad x\in E,$$
and
{
$$t^{-1}M_{E}(x)=\ad^{*}_{x_{(1)}} t^{-1}(x_{(2)})  + l_{1}^{*}t_n^{-1}(x),\quad x\in S^{n}(E), n\geq 2.$$
}

Let $x=x_1\odot\ldots\odot x_n\in   S^n(E)$, $n\geq 1$, and $y\in E$, such that $|y|=|x|+1$. We have:
{\begin{align*}
\eval{\omega, M_{E}(x\odot y)} &= \eval{ t^{-1}(M_{E} (x)),y} +(-1)^{|x|} \eval{t^{-1}(x_{1},\ldots, x_{n}), l_{1}(y) }\\
&\quad + 
(-1)^{|x_{(1)}|} \eval{t^{-1}(x_{(1)}), \ad_{x_{(2)}} y } \\
&=\eval{ t^{-1}(M_{E} (x)),y} -\eval{l_{1}^{*}t^{-1}(x), y }  -
 \eval{\ad^*_{x_{(1)}}t^{-1}(x_{(2)}),  y }\\
\end{align*}
}
and the result follows.
\end{proof}

\subsection{\texorpdfstring{$\O$}--operators as Maurer-Cartan elements}

Let $\ds (E,M_E\equiv\set{l_k}_{k\geq 1})$ and \linebreak
$\ds (V, M_V\equiv\set{m_k}_{k\geq 1})$ be Lie $\infty$-algebras.

The graded vector space of linear maps between $\bar S(V)$ and $E$ will be denoted by $\mathfrak{h}:= {\mathrm{Hom}} (\bar S(V), E)$.
It can be identified  with the space of  coalgebra morphisms between $\bar S(V)$ and $\bar S(E)$.
On the other hand, since
$$S^n(E\oplus V)\simeq \oplus_{k=0}^n \left({ S^{n-k}(E)\otimes S^{k}(V)}\right),\quad  n\geq 1,$$ the space $\mathfrak{h}$ can be seen as a subspace of $\Coder (\bar S(E\oplus V))$, the space of coderivations of
$\bar S(E\oplus V)$. Its elements define  coderivations that only act on elements of $\bar S(V)$,  they are $S(E)$-linear.

The space $S(E\oplus V)$ has a natural  $S(E)$-bimodule structure. With the above identification we have:
$$
e\cdot (x\otimes v)=(e\odot x)\otimes v=(-1)^{|e|(|x|+|v|)}(x\otimes v)\cdot e,
$$
for $e\in S(E)$, $x\otimes v\in S(E\oplus V)\simeq S(E)\otimes S(V)$.

Let $t:\bar S(V)\to E$ be an element of $\mathfrak{h}$ defined by the collection of maps
$t_k: S^{k}(V)\to E$, $k\geq 1$. Let us denote by  $T: \bar S(V)\to \bar S(E)$  the coalgebra morphism and by $\mathfrak{t}$  the coderivation of $\bar S(E\oplus V)$ defined by $t$. 
 Notice that $$\mathfrak{t}(v)=t_1(v), \quad v\in V$$ 
and
$$ \mathfrak{t}(v)=t(v_{(1)})\otimes v_{(2)} + t(v), \quad v\in S^{\geq 2}(V).$$
and also, for $x \in \bar S (E)$, $$\mathfrak{t}(x \otimes v)=(-1)^{|x||t|}x \cdot \mathfrak{t}(v), \quad v \in \bar S(V).$$

\begin{prop} \label{prop:h:abelian:subalgebra}
The  space $\mathfrak{h}$ is an abelian Lie subalgebra of $\Coder (\bar S(E \oplus V))$.
\end{prop}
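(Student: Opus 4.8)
The plan is to verify two things: first, that $\mathfrak h = \mathrm{Hom}(\bar S(V),E)$, viewed as the set of coderivations of $\bar S(E\oplus V)$ that are $S(E)$-linear and take values in the $\bar S(V)$-part, is closed under the graded commutator bracket on $\Coder(\bar S(E\oplus V))$; and second, that this bracket vanishes identically on $\mathfrak h$. Both facts follow from a degree/type bookkeeping argument once one records carefully how an element $\mathfrak t$ corresponding to $t\in\mathrm{Hom}(\bar S(V),E)$ acts. The key structural feature, already spelled out in the excerpt, is that $\mathfrak t(x\otimes v) = (-1)^{|x||t|}\,x\cdot\mathfrak t(v)$ for $x\in\bar S(E)$, and that on $v\in\bar S(V)$ the output $\mathfrak t(v)=t(v_{(1)})\otimes v_{(2)}+t(v)$ lies in $\bar S(E)\otimes\bar S(V)$ with the $\bar S(E)$-tensor-leg being exactly \emph{one} generator $t(v_{(i)})\in E$ coming out of $V$ (i.e. the $S(E)$-degree of the output, counting only the ``new'' generators produced, is raised by one while the $S(V)$-degree is consumed).

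First I would make the type statement precise: an element of $\mathfrak h$ sends $S^{n-k}(E)\otimes S^k(V)$ into $S^{n-k+1}(E)\otimes S^{k-1}(V)$ (with the convention that the image is $0$ when $k=0$), and moreover it is $S(E)$-linear in the sense displayed above. Call such a coderivation ``of type $\mathfrak h$''. Now take $\mathfrak s,\mathfrak t$ of type $\mathfrak h$, corresponding to $s,t\in\mathrm{Hom}(\bar S(V),E)$. The composite $\mathfrak s\smalcirc\mathfrak t$ takes $S^{n-k}(E)\otimes S^k(V)$ first into $S^{n-k+1}(E)\otimes S^{k-1}(V)$ and then into $S^{n-k+2}(E)\otimes S^{k-2}(V)$. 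The crucial observation is that when $\mathfrak s$ is applied to $\mathfrak t(v)=t(v_{(1)})\otimes v_{(2)}+t(v)$, by $S(E)$-linearity $\mathfrak s$ \emph{ignores} the leg $t(v_{(1)})\in\bar S(E)$ entirely: $\mathfrak s\bigl(t(v_{(1)})\otimes v_{(2)}\bigr)=\pm\, t(v_{(1)})\cdot\mathfrak s(v_{(2)})$, and it kills $t(v)\in E\subset\bar S(E)$ outright since $\mathfrak s$ vanishes on $\bar S(E)$. Hence $\mathfrak s\smalcirc\mathfrak t$ only ever sees the ``$V$-legs'' being fed to $s$ after $t$ has already acted, and the same is true of $\mathfrak t\smalcirc\mathfrak s$.

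Then the plan is to compute $\mathfrak s\smalcirc\mathfrak t(v)$ and $\mathfrak t\smalcirc\mathfrak s(v)$ for $v\in\bar S(V)$ explicitly using $\Delta v = v_{(1)}\otimes v_{(2)}$, coassociativity, and cocommutativity, and see that
\[
\mathfrak s\smalcirc\mathfrak t(v) = (-1)^{|s_{(1)}|\cdots}\, s(v_{(1)})\cdot\bigl(t(v_{(2)})\otimes v_{(3)} + t(v_{(2)})\bigr) = s(v_{(1)})\odot t(v_{(2)})\otimes v_{(3)} + \cdots,
\]
where the right-hand side is manifestly symmetric in the roles of $s$ and $t$ once the Koszul signs are accounted for, because the symmetric product $\odot$ on $\bar S(E)$ is graded commutative and the two pieces $s(v_{(1)})$, $t(v_{(2)})$ arise from a cocommutative coproduct. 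Concretely, swapping $s\leftrightarrow t$ reorders $s(v_{(1)})\odot t(v_{(2)})$ to $t(v_{(2)})\odot s(v_{(1)})$ with a sign $(-1)^{(|s|+\text{deg stuff})(|t|+\cdots)}$ which is precisely the sign in the definition of $[\mathfrak s,\mathfrak t]_c = \mathfrak s\smalcirc\mathfrak t - (-1)^{\deg\mathfrak s\deg\mathfrak t}\mathfrak t\smalcirc\mathfrak s$, so the difference cancels. This gives simultaneously closure (the composite, hence the bracket, is again of type $\mathfrak h$ — one checks the bracket of two coderivations is a coderivation, which is standard, and the type is visibly preserved) and vanishing of the bracket. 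I would then note that for this it is cleanest to verify the identity at the level of the corank-one projections $p_E\smalcirc[\mathfrak s,\mathfrak t]_c$ and $p_V\smalcirc[\mathfrak s,\mathfrak t]_c$, using the Richardson--Nijenhuis description from the excerpt, but the coalgebra computation above is self-contained.

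The main obstacle I expect is purely the sign bookkeeping: one has to be careful that the ``degree'' of $\mathfrak t$ as a coderivation of $\bar S(E\oplus V)$ equals the degree $|t|$ of $t$ as a map $\bar S(V)\to E$, that the $S(E)$-linearity sign $(-1)^{|x||t|}$ interacts correctly with the Koszul rule when $\mathfrak s$ is then applied, and that the cocommutativity relation $v_{(1)}\otimes v_{(2)} = (-1)^{|v_{(1)}||v_{(2)}|}v_{(2)}\otimes v_{(1)}$ is invoked with the matching sign to symmetrize $s(v_{(1)})\odot t(v_{(2)})$ against $t(v_{(1)})\odot s(v_{(2)})$. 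Everything else — that the bracket of coderivations is a coderivation, that $\mathfrak h$ is a graded subspace — is immediate. So the proof is essentially: (1) fix the correspondence $t\mapsto\mathfrak t$ and record the action formulas; (2) observe $S(E)$-linearity forces $\mathfrak s$ to skip the new $E$-leg created by $\mathfrak t$; (3) compute both composites on $\bar S(V)$ and match signs to conclude $[\mathfrak s,\mathfrak t]_c=0$, which a fortiori shows $\mathfrak h$ is a subalgebra.
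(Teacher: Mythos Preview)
Your proposal is correct and follows essentially the same route as the paper's proof: compute $[\mathfrak s,\mathfrak t]_c$ on $v\in\bar S(V)$, use the $S(E)$-linearity of the coderivations to reduce to an expression of the form $s(v_{(1)})\odot t(v_{(2)})\otimes v_{(3)}$ (plus the analogous term without $v_{(3)}$), and then invoke cocommutativity of $\Delta$ together with graded commutativity of $\odot$ to see that the two composites agree up to the sign $(-1)^{|s||t|}$. The paper carries out precisely this sign bookkeeping explicitly, so what you flag as the ``main obstacle'' is in fact the entire content of the argument.
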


\begin{proof}
Let $t=\sum_{i} t_{i} :\bar S(V)\to E$ and $w=\sum_{i} w_{i} :\bar S(V)\to E$ be  elements of $\mathfrak{h}$. Denote by
 $\mathfrak{t}$ and $\mathfrak{w}$
 the coderivations of $\bar S(E\oplus V)$ defined by $t$ and $w$, respectively.

{Let  $v \in \bar S(V)$.} The Lie bracket of $\mathfrak{t}$ and $\mathfrak{w}$ is given by:
 \begin{align*}
 \brr{\mathfrak{t}, \mathfrak{w}}_{c}(v)&=\mathfrak{t}\smalcirc ({w}(v_{(1)})\otimes v_{(2)})-(-1)^{|t||w|}\mathfrak{w}\smalcirc ({t}(v_{(1)})\otimes v_{(2)})\\
 &=(-1)^{|t|(|w|+|v_{(1)}|)}{w}(v_{(1)})\cdot \mathfrak{t}(v_{(2)})-(-1)^{|t||w|}(-1)^{|w|(|t|+|v_{(1)}|)}{ t}(v_{(1)})\cdot \mathfrak{w}(v_{(2)})\\
 &=\Big((-1)^{|t|(|w|+|v_{(1)}|)}{ w}(v_{(1)})\cdot {t}(v_{(2)})-(-1)^{|t||w|}(-1)^{|w|(|t|+|v_{(1)}|)}{t}(v_{(1)})\cdot {w}(v_{(2)})\Big)\otimes v_{(3)}\\
 & \quad + (-1)^{|t|(|w|+|v_{(1)}|)}{ w}(v_{(1)})\cdot {t}(v_{(2)})-(-1)^{|t||w|}(-1)^{|w|(|t|+|v_{(1)}|)}{t}(v_{(1)})\cdot {w}(v_{(2)})\\
 &=\Big((-1)^{|t|(|w|+|v_{(1)}|)}{w}(v_{(1)})\odot {t}(v_{(2)})-(-1)^{|t|(|w|+|v_{(2)}|)+{|v_{(1)}||v_{(2)}|}}{w}(v_{(2)})\odot {t}(v_{(1)})\Big)\otimes v_{(3)}\\
 &\quad +(-1)^{|t|(|w|+|v_{(1)}|)}{w}(v_{(1)})\odot {t}(v_{(2)})-(-1)^{|t|(|w|+|v_{(2)}|)+{|v_{(1)}||v_{(2)}|}}{w}(v_{(2)})\odot {t}(v_{(1)}),
 \end{align*}
{where we used the fact that $\mathfrak{t}$ and $\mathfrak{w}$ are $\bar S(E)$-linear.}
Because of cocommutativity of the coproduct,   the last expression vanishes.
\end{proof}

Now, let $\Phi:E\to \Coder (\bar S(V))[1]$ be an action of the Lie $\infty$-algebra $E$ on the Lie $\infty$-algebra $V$. {By Proposition~\ref{prop:lie:infty:algebra:E+V}, $\Phi$ induces a coderivation $\Upsilon$ of $\bar S(E\oplus V)$ and $M_{E \oplus V}= M_E + \Upsilon+ M_V$ is a Lie $\infty$-algebra structure on $E\oplus V$. Let $\mathcal{P}:\Coder (\bar S(E\oplus V))\to \mathfrak{h}$ be the projection onto $\mathfrak{h}$.} 

Then we have:

\begin{prop}  \label{prop:Vdata:h}
The quadruple $\ds \left(\Coder (\bar S(E\oplus V)), \mathfrak{h},  \mathcal{P}, M_{E \oplus V}\right)$ is a $V$-data and $\mathfrak{h}$ has a Lie $\infty$-algebra structure.
\end{prop}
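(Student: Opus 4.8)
The plan is to verify the definition of a $V$-data in the sense of Voronov \cite{V05}: we must check that $\Coder(\bar S(E\oplus V))$ is a graded Lie algebra, that $\mathfrak h$ is an abelian Lie subalgebra, that $\mathcal P$ is a projection whose kernel is a Lie subalgebra, and that $M_{E\oplus V}$ is an element of $\ker\mathcal P$ with $[M_{E\oplus V},M_{E\oplus V}]_c=0$; once this is done, Voronov's higher derived bracket construction immediately endows $\mathfrak h$ with the desired Lie $\infty$-algebra structure, so the only work is checking the $V$-data axioms. The graded Lie algebra structure on $\Coder(\bar S(E\oplus V))$ is the graded commutator $[\cdot,\cdot]_c$, which preserves coderivations (this was recalled in the excerpt for $\Coder(\bar S(E))[1]$, and the same argument applies to $E\oplus V$). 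That $\mathfrak h$ is an abelian Lie subalgebra is exactly Proposition~\ref{prop:h:abelian:subalgebra}.

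Next I would identify $\ker\mathcal P$ explicitly. Using $S^n(E\oplus V)\simeq\oplus_{k=0}^n S^{n-k}(E)\otimes S^k(V)$, a coderivation of $\bar S(E\oplus V)$ is determined by a linear map $\bar S(E\oplus V)\to E\oplus V$, i.e.\ by its components landing in $E$ and in $V$. An element of $\mathfrak h$ is precisely a coderivation whose coregenerating map is supported on $\bar S(V)$ (purely in the $S^0(E)\otimes\bar S(V)$ part) and lands in $E$; equivalently, $\mathfrak h$ consists of the $S(E)$-linear coderivations killing $\bar S(E)$ and sending $\bar S(V)$ into $\bar S(E)$. The projection $\mathcal P$ is the obvious one onto this summand. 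I would then check that $\ker\mathcal P$ is closed under $[\cdot,\cdot]_c$: this follows because $\mathfrak h$ is precisely the space of coderivations corresponding to coregenerating maps with the "input in $\bar S(V)$, output in $E$, shift exactly one $V$ to an $E$" profile, and the bracket of two coderivations not of this form cannot produce one — a degree/filtration bookkeeping argument. Concretely, writing any coderivation $D$ via its components $D^{E}\in\mathrm{Hom}(\bar S(E\oplus V),E)$ and $D^{V}\in\mathrm{Hom}(\bar S(E\oplus V),V)$, the condition $\mathcal P D=0$ says $D^E$ vanishes on the $S^0(E)\otimes\bar S(V)$-component, and one checks the commutator preserves this.

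Finally I would verify $M_{E\oplus V}\in\ker\mathcal P$: by Proposition~\ref{prop:lie:infty:algebra:E+V}, $M_{E\oplus V}=M_E+\Upsilon+M_V$, and none of these three pieces contributes to the $\mathfrak h$-component — $M_E$ lives on $\bar S(E)$, $M_V$ lives on $\bar S(V)$ landing in $V$, and $\Upsilon$ sends $x\otimes v$ to $\Phi_x(v)$ (which lands in $\bar S(V)$, not $E$), so $\mathcal P(M_{E\oplus V})=0$. And $[M_{E\oplus V},M_{E\oplus V}]_c=2M_{E\oplus V}^2=0$ is exactly the content of Proposition~\ref{prop:lie:infty:algebra:E+V}. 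Having assembled the four axioms, Voronov's theorem yields the Lie $\infty$-structure on $\mathfrak h$ with brackets
\begin{equation*}
\mathfrak l_n(a_1,\dots,a_n)=\mathcal P\,[\,\cdots[[M_{E\oplus V},a_1]_c,a_2]_c,\dots,a_n]_c,\qquad a_i\in\mathfrak h.
\end{equation*}
The main obstacle I anticipate is the bookkeeping for $\ker\mathcal P$ being a Lie subalgebra: one has to be careful that the $S(E)$-linearity and the "exactly one $V$-slot converted" structure of $\mathfrak h$ are genuinely complementary to $\ker\mathcal P$ under the commutator, rather than just a naive degree count; spelling out the component description of coderivations of $\bar S(E\oplus V)$ and tracking where each bracket term lands is the crux, though it is ultimately routine once the identification $S^n(E\oplus V)\simeq\oplus_k S^{n-k}(E)\otimes S^k(V)$ is used systematically.
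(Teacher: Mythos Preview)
Your overall strategy is correct and matches the paper's: verify the four $V$-data axioms and invoke Voronov. The one substantive difference is precisely at the step you flag as the main obstacle, where the paper has a cleaner argument than your component bookkeeping. The paper writes the projection as $\mathcal P(Q)=p\smalcirc Q\smalcirc i$, with $p:\bar S(E\oplus V)\to E$ the projection and $i:\bar S(V)\hookrightarrow\bar S(E\oplus V)$ the inclusion; then $\ker\mathcal P$ is exactly the set of coderivations $Q$ for which $Q\smalcirc i$ lands in $\bar S(V)$, i.e.\ $Q\smalcirc i$ is itself a coderivation of $\bar S(V)$. For $P,Q\in\ker\mathcal P$ one then has $Q\smalcirc i=i\smalcirc(Q\smalcirc i)$, so
\[
\mathcal P(\brr{Q,P}_c)=p\smalcirc Q\smalcirc i\smalcirc(P\smalcirc i)-(-1)^{|Q||P|}p\smalcirc P\smalcirc i\smalcirc(Q\smalcirc i)=0,
\]
since $p\smalcirc Q\smalcirc i=\mathcal P(Q)=0$ and likewise for $P$. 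This replaces the filtration tracking entirely, and the same description makes $M_{E\oplus V}\in\ker\mathcal P$ immediate via $M_{E\oplus V}\smalcirc i=M_V$, rather than checking the three summands $M_E$, $\Upsilon$, $M_V$ separately as you do.
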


\begin{proof}
We already know that  $\Coder (\bar S(E\oplus V))$, equipped with the commutator, is a graded Lie algebra and $\mathfrak{h}$ is an abelian Lie subalgebra.

Let $p:\bar S(E\oplus V)\to E$ be the projection and $i: \bar S(V)\to  \bar S(E\oplus V)$ the inclusion.

Notice that, for each $Q\in\Coder (\bar S(E\oplus V))$ we have $\ds \mathcal{P}(Q)=p\smalcirc Q\smalcirc i$ so
 $$\ker \mathcal{P}=\set{Q\in\Coder (\bar S(E\oplus V)): Q\smalcirc  i \mbox{ is a coderivation of } \bar S(V)}$$ 
 is clearly a Lie subalgebra of $\Coder (\bar S(E\oplus V))$:
 \begin{align*}
\mathcal{P}(\brr{Q,P}_{c})&= p\smalcirc \brr{Q,P}_{c}\smalcirc i=p\smalcirc QP\smalcirc i - (-1)^{|Q||P|}p\smalcirc PQ\smalcirc i\\
&=p\smalcirc Q\smalcirc i\smalcirc P\smalcirc i - (-1)^{|Q||P|}p\smalcirc P\smalcirc i\smalcirc Q\smalcirc i=0, \quad P,Q\in\ker \mathcal{P}.
\end{align*}
Moreover
$$M_{E \oplus V} \smalcirc i =M_{V}, \mbox{ so } M_{E \oplus V}\in (\ker\mathcal{P})_{1}$$
and, since $M_{E \oplus V}$ defines a Lie $\infty$-structure in $E\oplus V$, we have:
$$
\brr{M_{E \oplus V},M_{E \oplus V}}_{c}=0.
$$

Voronov's construction \cite{V05} guarantees that $\mathfrak{h}$ inherits a (symmetric) Lie $\infty$-structure given by:

\begin{align*}
\partial_{k}\big(t_{1},\ldots,t_{k})=\mathcal{P}([[\ldots\brr{M_{E \oplus V}, t_{1}}_{_{RN}}\ldots]_{_{RN}}, t_{k}]_{_{RN}}\big), \quad t_{1},\ldots,t_{k}\in\mathfrak{h}, \, k\geq 1.
\end{align*}
\end{proof}

\begin{rem}
A similar proof as in  \cite{LST2021} shows that, with the above structure, $\mathfrak{h}$ is a filtered Lie $\infty$-algebra.
\end{rem}

\begin{lem}\label{lemma:bracket:voronov}
Let 
$t:\bar S(V)\to E$ be a degree zero element of $\mathfrak{h}$.
For each $v\in \bar S(V)$,
\begin{align*}
\partial_{1}t(v)=l_{1}t(v)-t\smalcirc M_{V}(v)
\end{align*}
and
\begin{align*}
\partial_{k}(t,\ldots,t)(v)&=l_{k}\big(t(v_{(1)}),\ldots, t(v_{(k)})\big)-k \, t \left(\Phi_{t(v_{(1)})
\odot\ldots\odot t(v_{(k-1)})} v_{(k)}\right), \quad k\geq 2.
\end{align*}

\begin{proof}
Let $p:{\bar S(E\oplus V)}\to E$ be the projection map and $\mathfrak{t}$ the  coderivation of $\bar S(E\oplus V)$ defined by $t$.
 Notice that
\begin{align*}
    p\smalcirc \mathfrak{t}&=  t  \\
    p\smalcirc \mathfrak{t}^{k}&=  0, \quad k\geq 2.
\end{align*}
Consequently, for $k=1$  we have
\begin{align*}
\partial_{1}t(v)=p\smalcirc M_{E \oplus V}\smalcirc {\mathfrak{t}}(v)- p\smalcirc \mathfrak{t}\smalcirc M_{V}(v)=l_{1}t(v)-t\smalcirc M_{V}(v),
\,\,{v \in \bar S(V)}
\end{align*}
and, for $k\geq 2$,
\begin{align*}
\partial_{k}(t,\dots,t)&=p\smalcirc M_{E \oplus V}\smalcirc \mathfrak{t}^{k} - k \, p\smalcirc \mathfrak{t}\smalcirc M_{E \oplus V}\smalcirc \mathfrak{t}^{k-1}\\
&=l\smalcirc \mathfrak{t}^{k} - k\,t\smalcirc M_{E \oplus V}\smalcirc \mathfrak{t}^{k-1}
\end{align*}
and the result follows.
\end{proof}
\end{lem}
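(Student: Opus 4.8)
The plan is to unwind Voronov's formula for the higher derived brackets on $\mathfrak{h}$ and to exploit the very degenerate behaviour of the powers of the coderivation $\mathfrak{t}$ under the projection $\mathcal{P}$. Throughout I identify elements of $\mathfrak{h}$ with the coderivations of $\bar S(E\oplus V)$ they define, so that the Richardson--Nijenhuis bracket becomes the commutator $[\cdot,\cdot]_{c}$. Recall from the proof of Proposition~\ref{prop:Vdata:h} that $\mathcal{P}(Q)=p\smalcirc Q\smalcirc i$ for $Q\in\Coder(\bar S(E\oplus V))$, with $p\colon\bar S(E\oplus V)\to E$ the projection and $i\colon\bar S(V)\hookrightarrow\bar S(E\oplus V)$ the inclusion, and that $M_{E\oplus V}\smalcirc i=M_V$. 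First I would record that, since $\mathfrak{t}$ is $S(E)$-linear and $\mathfrak{t}(v)=t(v_{(1)})\otimes v_{(2)}+t(v)$ for $v\in S^{\geq 2}(V)$, an easy induction gives
\[
\mathfrak{t}^{k}(v)=t(v_{(1)})\odot\cdots\odot t(v_{(k)})\otimes v_{(k+1)}\;+\;t(v_{(1)})\odot\cdots\odot t(v_{(k)}),
\]
a sum lying in $S^{k}(E)\otimes S^{\geq 1}(V)\oplus S^{k}(E)$; hence $p\smalcirc\mathfrak{t}=t$ on $\bar S(V)$, $p\smalcirc\mathfrak{t}$ vanishes on every summand carrying an $E$-factor, and in particular $p\smalcirc\mathfrak{t}^{k}=0$ for $k\geq 2$.

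Next I would expand the iterated commutator defining $\partial_k$. Because $|t|=0$ (so $|\mathfrak{t}|=0$), left and right composition with $\mathfrak{t}$ commute, hence the $k$-fold bracket $[[\cdots[M_{E\oplus V},\mathfrak{t}]_{c}\cdots]_{c},\mathfrak{t}]_{c}$ equals $\sum_{j=0}^{k}\binom{k}{j}(-1)^{j}\,\mathfrak{t}^{j}M_{E\oplus V}\mathfrak{t}^{k-j}$. Composing on the left with $p$ and using $p\smalcirc\mathfrak{t}^{j}=0$ for $j\geq 2$, only $j=0,1$ survive, so
\[
\partial_{k}(t,\ldots,t)=p\smalcirc M_{E\oplus V}\smalcirc\mathfrak{t}^{k}\smalcirc i\;-\;k\,p\smalcirc\mathfrak{t}\smalcirc M_{E\oplus V}\smalcirc\mathfrak{t}^{k-1}\smalcirc i .
\]
For $k=1$ this reads $p\smalcirc M_{E\oplus V}\smalcirc\mathfrak{t}\smalcirc i-p\smalcirc\mathfrak{t}\smalcirc M_V$; since $p\smalcirc\mathfrak{t}=t$ on $\bar S(V)$ and $p\smalcirc M_{E\oplus V}$ is $l_{1}$ on $E\subset\bar S(E)$, one reads off $\partial_{1}t(v)=l_{1}t(v)-t\smalcirc M_V(v)$ directly.

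For $k\geq 2$ I would evaluate the two surviving terms on $v\in\bar S(V)$ using the formula for $\mathfrak{t}^{k}(v)$ and the splitting $M_{E\oplus V}=M_E+\Upsilon+M_V$. In $p\smalcirc M_{E\oplus V}\smalcirc\mathfrak{t}^{k}(v)$, the summand $t(v_{(1)})\odot\cdots\odot t(v_{(k)})\otimes v_{(k+1)}$ keeps both an $E$-factor and a $V$-factor after applying $M_E$, $M_V$ or $\Upsilon$ (recall $\Upsilon$ has image in $(\bar S(E)\otimes\bar S(V))\oplus\bar S(V)$), so $p$ annihilates it; on $t(v_{(1)})\odot\cdots\odot t(v_{(k)})\in S^{k}(E)$ both $\Upsilon$ and $M_V$ vanish and $p\smalcirc M_E$ returns $l_{k}(t(v_{(1)}),\ldots,t(v_{(k)}))$. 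In the second term, $p\smalcirc\mathfrak{t}$ picks out the $\bar S(V)$-component of $M_{E\oplus V}\big(\mathfrak{t}^{k-1}(v)\big)$ and applies $t$; the only $\bar S(V)$-valued contribution comes from $\Upsilon$ acting on $t(v_{(1)})\odot\cdots\odot t(v_{(k-1)})\otimes v_{(k)}$ by contracting all $k-1$ copies of $E$, which by the definitions of $\Upsilon$ and of the coderivation $\Phi_{x}$ equals $\Phi_{t(v_{(1)})\odot\cdots\odot t(v_{(k-1)})}v_{(k)}\in\bar S(V)$. Assembling the pieces gives $\partial_{k}(t,\ldots,t)(v)=l_{k}\big(t(v_{(1)}),\ldots,t(v_{(k)})\big)-k\,t\big(\Phi_{t(v_{(1)})\odot\cdots\odot t(v_{(k-1)})}v_{(k)}\big)$.

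Most of this is routine: the induction for $\mathfrak{t}^{k}$ and the bookkeeping of which summands retain an $E$-factor. The step needing genuine care is the identification of the $\bar S(V)$-valued part of $M_{E\oplus V}\smalcirc\mathfrak{t}^{k-1}$ --- one must check that, among all the ways $\Upsilon$ can contract $E$-factors against $V$-factors, only the total contraction of the $k-1$ available copies of $E$ produces a term with no $E$-factor left, and that this term is exactly the coderivation $\Phi_{t(v_{(1)})\odot\cdots\odot t(v_{(k-1)})}$ applied to $v_{(k)}$, with the correct combinatorial coefficient. Signs cause no trouble, $\mathfrak{t}$ having degree zero.
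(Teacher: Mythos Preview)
Your proof is correct and follows essentially the same route as the paper's: both rest on the observations $p\smalcirc\mathfrak{t}=t$ and $p\smalcirc\mathfrak{t}^{j}=0$ for $j\geq 2$, so that only the $j=0,1$ terms of the iterated commutator survive and one obtains $\partial_{k}(t,\ldots,t)=p\smalcirc M_{E\oplus V}\smalcirc\mathfrak{t}^{k}-k\,p\smalcirc\mathfrak{t}\smalcirc M_{E\oplus V}\smalcirc\mathfrak{t}^{k-1}$. You supply more detail than the paper does (the explicit inductive formula for $\mathfrak{t}^{k}(v)$, the binomial expansion of the iterated commutator, and the careful tracking of which summands survive the projection $p$), but the underlying argument is identical.
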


\begin{rem}
Notice that     
$\ds \partial_{k}(t,\dots,t)(v)=0$, for $v\in S^{<k}(V)$, as a consequence of $\mathfrak{t}^{k}(v)=0$ and $\Phi\smalcirc \mathfrak{t}^{k-1}(v) \in \bar S(E)$.
\end{rem}

Next proposition realizes $\mathcal{O}$-operators as Maurer-Cartan elements of this Lie $\infty$-algebra $\mathfrak{h}$.

\begin{prop} \label{prop:1-1correspondence}
$\O$-operators {on $E$ with respect to an action $\Phi$} are  
Maurer-Cartan elements of $\mathfrak{h}$.
\end{prop}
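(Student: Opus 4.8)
The plan is to unwind the definition of Maurer--Cartan element of $\mathfrak{h}$ using the explicit formulas for the brackets $\partial_k$ obtained in Lemma~\ref{lemma:bracket:voronov}, and to match the resulting identity with the defining equation \eqref{def:O:operator} of an $\O$-operator. Concretely, a degree zero element $t\in\mathfrak{h}$ is a Maurer--Cartan element precisely when
$$
\sum_{k\geq 1}\frac{1}{k!}\,\partial_k(t,\ldots,t)=0,
$$
and I want to show that, evaluated on an arbitrary $v\in\bar S(V)$, this is equivalent to $M_E\smalcirc T(v)=T\smalcirc(\Phi^T+M_V)(v)$, where $T$ is the comorphism determined by $t$ and $\Phi^T$ the coderivation of Lemma~\ref{Lemma:deformed:coderivation:by:T}.

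First I would substitute the formulas from Lemma~\ref{lemma:bracket:voronov}: the $k=1$ term contributes $l_1t(v)-t\smalcirc M_V(v)$, while for $k\geq 2$ the term $\frac{1}{k!}\partial_k(t,\ldots,t)(v)$ contributes
$$
\frac{1}{k!}\,l_k\big(t(v_{(1)}),\ldots,t(v_{(k)})\big)-\frac{1}{(k-1)!}\,t\big(\Phi_{t(v_{(1)})\odot\ldots\odot t(v_{(k-1)})}v_{(k)}\big).
$$
Summing over $k$, the first group of terms reassembles, via the comorphism formula $T(v)=\sum_{r}\frac{1}{r!}t(v_{(1)})\odot\cdots\odot t(v_{(r)})$ and Proposition~\ref{prop:isomorphism:families:coderivations} applied to the coderivation $M_E$, into $p_E\smalcirc M_E\smalcirc T(v)$, i.e. the $E$-component of $M_E(T(v))$; since $T$ is a comorphism and $M_E$ a coderivation, knowing the $E$-component on all of $\bar S(V)$ determines $M_E\smalcirc T$ entirely. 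The second group, after reindexing $k\mapsto k+1$ and using coassociativity of $\Delta$ to recognize $\sum_{k\geq 1}\frac{1}{k!}t(v_{(1)})\odot\cdots\odot t(v_{(k)})\otimes(\text{rest})$ as $T(v_{(1)})\otimes v_{(2)}$ in Sweedler notation, collapses to $t\big(\Phi_{T(v_{(1)})}v_{(2)}\big)=t\smalcirc\Phi^T(v)$. Together with the $-t\smalcirc M_V(v)$ from $k=1$, the Maurer--Cartan equation becomes $p_E\smalcirc M_E\smalcirc T=t\smalcirc(\Phi^T+M_V)$, and composing the latter up to a comorphism gives $T\smalcirc(\Phi^T+M_V)$ on the right; since $\Phi^T+M_V$ is a coderivation of $\bar S(V)$ (Lemma~\ref{Lemma:deformed:coderivation:by:T} and $M_V$ a coderivation), equality of the $E$-components is equivalent to equality of the full maps into $\bar S(E)$. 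This is exactly \eqref{def:O:operator}.

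The main obstacle I anticipate is the bookkeeping in the reindexing step: one must carefully track the combinatorial coefficients $\frac{1}{k!}$ versus $\frac{1}{(k-1)!}$, the Koszul signs hidden in the unshuffle sums, and the passage from the ``all $t$'s applied symmetrically'' form produced by Voronov's construction to the genuinely asymmetric Sweedler expression $\Phi_{T(v_{(1)})}v_{(2)}$. The key technical point making this work is cocommutativity of $\Delta$ together with the $\bar S(E)$-linearity of the coderivation $\mathfrak{t}$, which lets the factor $\frac{1}{(k-1)!}$ absorb the $k$ from $\partial_k$ and reproduce precisely the single distinguished slot $v_{(k)}$ on which $\Phi$ acts. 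Once the two sides are identified on the $E$-component, the reduction to the full comorphism/coderivation identity is automatic from Proposition~\ref{prop:isomorphism:families:coderivations} and the correspondence between comorphisms and their corivations. I would also remark that for the convergence of the infinite Maurer--Cartan sum one invokes the filtered structure of $\mathfrak{h}$ noted in the remark following Proposition~\ref{prop:Vdata:h}.
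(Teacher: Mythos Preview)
Your proposal is correct and follows essentially the same route as the paper: expand the Maurer--Cartan sum via Lemma~\ref{lemma:bracket:voronov}, recognize $\sum_k\frac{1}{k!}l_k(t(v_{(1)}),\ldots,t(v_{(k)}))$ as $l\smalcirc T(v)$ and $\sum_k\frac{1}{(k-1)!}t(\Phi_{t(v_{(1)})\odot\cdots\odot t(v_{(k-1)})}v_{(k)})$ as $t(\Phi_{T(v_{(1)})}v_{(2)})$, and conclude that the Maurer--Cartan equation reads $l\smalcirc T-t\smalcirc M_V-t\smalcirc\Phi^T=0$, which is the $E$-component of \eqref{def:O:operator}. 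The paper is simply terser about the reindexing and about why the $E$-component identity suffices; your explicit remark that this follows from Proposition~\ref{prop:isomorphism:families:coderivations} (comorphism composed with coderivation on both sides) is a useful clarification the paper leaves implicit.
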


\begin{proof} 

Let $t:\bar S(V)\to E$ be a degree zero element of $\mathfrak{h}$. 
Maurer-Cartan equation yields
$$
\partial_{1}t + \frac{1}{2}\partial_{2}(t,t) + \dots+ \frac{1}{k!} \partial_{k}(t, \ldots,t) + \ldots =0.
$$
Using Lemma \ref{lemma:bracket:voronov} we have, for each $v\in S^{k}(V)$,
\begin{align*}
  \partial_{1}t (v)+& \frac{1}{2}\partial_{2}(t,t) (v)+ \dots +\frac{1}{k!} \partial_{k}(t, \ldots,t) (v)  =\\
 &= l_{1}t(v)-t\smalcirc M_{V}(v) \\
  & +\frac{1}{2}l_{2}\big(t(v_{(1)}), t(v_{(2)})\big)- t \left(\Phi_{t(v_{(1)})} v_{(2)}\right)+ \ldots +\\
   & +\frac{1}{k!}l_{k}\big(t(v_{(1)}),\ldots, t(v_{(k)})\big)-\frac{1}{(k-1)!} \, t \left(\Phi_{t(v_{(1)})
\odot\ldots\odot t(v_{(k-1)})} v_{(k)}\right).\\
\end{align*}

Let $T:\bar S(V)\to \bar S(E)$  be the morphism of coalgebras defined by $t:\bar S(V)\to E$.
 Maurer-Cartan equation can be written as
$$
l\smalcirc T(v) - t\smalcirc M_{V}(v) - t\,\Phi_{T(v_{(1)})}v_{(2)}=0,
$$
which is equivalent to  $T$ being an $\O$-operator (see Equation (\ref{def:O:operator})).
\end{proof}

\section{Deformation of \texorpdfstring{$\O$}--operators} \label{section4}

{We prove that each Maurer-Cartan element of a special graded Lie subalgebra of $\Coder(\bar S(E\oplus V))$ encondes a Lie $\infty$-algebra structure on $E$ and a curved Lie $\infty$-action of $E$ on $V$. We study deformations of $\O$-operators.}

\subsection{Maurer-Cartan elements of Coder
$\mathbf{(\bar{S}(E\oplus{V}))}$}

Let $E$ and $V$ be two graded vector spaces and consider the graded Lie algebra  $\mathfrak{L}:=( \Coder(\bar S(E\oplus V)), \brr{\cdot,\cdot}_{c})$.
Since
$\bar S(E\oplus V)\simeq \bar S(E) \oplus {(\bar S(E) \otimes \bar S(V))\oplus \bar S(V)}$,
the space $M:=\Coder(\bar S(E))$ of coderivations of $\bar S(E)$ can be seen as a graded Lie subalgebra of $\mathfrak{L}$. Also, the space $R$ of coderivations {defined by linear maps of the space ${\mathrm{Hom}}((\bar S(E) \otimes \bar S(V))\oplus \bar S(V),V)$} can be embedded in $\mathfrak{L}$.
We will use the identifications $M\equiv {\mathrm{Hom}}(\bar S(E),E)$ and $R\equiv  {\mathrm{Hom}}((\bar S(E) \otimes \bar S(V))\oplus \bar S(V),V)$. 
Given $\rho\in R$, we will denote by $\rho_0$ the restriction of the linear map $\rho$ to $\bar S(V)$ and by $\rho_x$ the linear map obtained by restriction of $\rho$ to $\set{x}\otimes \bar S(V)$, with $x \in \bar{S}(E)$.
We set $\mathfrak{L}':=M\oplus R$.
\begin{prop} \label{prop:L':Lie:subalgebra}
The space  $\mathfrak{L}'$ is a graded Lie subalgebra of $\mathfrak{L}=\Coder (\bar S(E \oplus V))$.
\end{prop}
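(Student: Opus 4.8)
The plan is to show that $\mathfrak{L}'=M\oplus R$ is closed under the graded commutator $\brr{\cdot,\cdot}_c$ of $\Coder(\bar S(E\oplus V))$. Since $M=\Coder(\bar S(E))$ is already known to be a graded Lie subalgebra of $\mathfrak{L}$ (coderivations supported on the $\bar S(E)$-summand), it remains to compute the three remaining types of brackets: $\brr{M,R}_c$, $\brr{R,R}_c$, and to recheck $\brr{M,M}_c$ closes inside $M$ (immediate). The key structural fact I would exploit is the decomposition $\bar S(E\oplus V)\simeq \bar S(E)\oplus(\bar S(E)\otimes\bar S(V))\oplus\bar S(V)$ together with the characterization of elements of $R$ as coderivations $\rho$ with $p_E\smalcirc\rho=0$ whose induced map lands in $V$ (i.e.\ $\rho$ is determined by $\rho|_{\bar S(V)}$ and the maps $\rho_x$, $x\in\bar S(E)$, via the formula analogous to $\Upsilon$ in Proposition~\ref{prop:lie:infty:algebra:E+V}). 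Equivalently, $R$ consists exactly of the coderivations $\rho$ that \emph{annihilate} $\bar S(E)$ and satisfy $p_E\smalcirc\rho=0$; this pair of conditions is what I would verify is preserved.

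First I would fix the description: a coderivation of $\bar S(E\oplus V)$ is determined by its projection $q=p_{E\oplus V}\smalcirc Q:\bar S(E\oplus V)\to E\oplus V$. For $Q\in M$, $q$ is supported on $\bar S(E)$ and valued in $E$; for $\rho\in R$, $q$ is supported on $(\bar S(E)\otimes\bar S(V))\oplus\bar S(V)$ and valued in $V$. So to show $\brr{Q_1,Q_2}_c\in\mathfrak{L}'$ it suffices to show that the projection $p_{E\oplus V}\smalcirc\brr{Q_1,Q_2}_c$ again has one of these two shapes, and that the $E$-valued part and $V$-valued part are separately supported correctly. For $\brr{M,M}_c$: both factors preserve $\bar S(E)$ and vanish on the complement, so the commutator does too, and $p_E$-projection stays $E$-valued — it lies in $M$. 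For $\brr{D,\rho}_c$ with $D\in M$, $\rho\in R$: I would check that $D\smalcirc\rho$ and $\rho\smalcirc D$ both annihilate $\bar S(E)$ (since $\rho$ does, and $D$ preserves $\bar S(E)$ so $\rho\smalcirc D$ on $\bar S(E)$ is $\rho$ applied to something in $\bar S(E)$, which is $0$) and that $p_E$ of the commutator vanishes (because $\rho$ contributes only $V$-components to the projection, and $D$ preserves the grading-type decomposition). Hence the projection of $\brr{D,\rho}_c$ is $V$-valued and supported on $(\bar S(E)\otimes\bar S(V))\oplus\bar S(V)$, i.e.\ $\brr{D,\rho}_c\in R$. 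For $\brr{\rho_1,\rho_2}_c$ with $\rho_i\in R$: here $\rho_1\smalcirc\rho_2$ maps $\bar S(E)\to 0$, and its $p_E$-projection is $0$ since $\rho_1$ already outputs things whose $E\oplus V$-projection is in $V$; the same for $\rho_2\smalcirc\rho_1$. I also need the support condition: the image of $\rho_2$ lies in $(\bar S(E)\otimes\bar S(V))\oplus\bar S(V)$, and applying $\rho_1$ keeps us there (it never creates pure $\bar S(E)$ terms). So $\brr{\rho_1,\rho_2}_c\in R$.

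The main obstacle I anticipate is purely bookkeeping: making precise the claim that ``$\rho\in R$ never produces a pure $\bar S(E)$ component and its top projection is $V$-valued,'' and that this property survives composition, requires writing the coderivation in Sweedler form over the three-fold decomposition and tracking which summands each term lands in, with the correct Koszul signs. Concretely, for $x\otimes v\in\bar S(E)\otimes\bar S(V)$ one has $\rho(x\otimes v)=\rho_x(v)+(-1)^{|x_{(1)}||?|}x_{(1)}\otimes\rho_{x_{(2)}}(v)$-type expressions (mirroring the $\Upsilon$ formula), and I would verify that composing two such expressions, or composing with a $D\in M$ acting only on the $x$-slot, never yields a term in $\bar S(E)$ alone and never yields an $E$-valued top projection. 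Once the shape is pinned down, closure is immediate; there is no genuine algebraic subtlety, only the need to be careful with the coalgebra combinatorics. I would present this by first recording the Sweedler-form descriptions of elements of $M$ and $R$ as a preliminary observation, then dispatch the three bracket cases in two or three lines each, invoking $\bar S(E)$-equivariance and the support of the projection as the only two checks needed.
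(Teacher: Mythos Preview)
Your proposal is correct and follows essentially the same approach as the paper: both arguments split the bracket into the cases $\brr{M,M}$, $\brr{M,R}$, $\brr{R,R}$ and verify closure by evaluating on the three summands $\bar S(E)$, $\bar S(V)$, $\bar S(E)\otimes\bar S(V)$ of $\bar S(E\oplus V)$. The paper carries this out at the level of the Richardson--Nijenhuis bracket with explicit formulas, while you phrase the same checks as ``annihilates $\bar S(E)$'' and ``$p_E$-projection vanishes,'' but since the RN bracket is precisely $p_{E\oplus V}\smalcirc\brr{\cdot,\cdot}_c$ these are the same computations.
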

\begin{proof}
Given $m\oplus \rho, m'\oplus \rho' \in \mathfrak{L}'$, let us see that
$$\brr{m\oplus \rho,m'\oplus \rho'}_{_{RN}}= \brr{m,m'}_{_{RN}} \oplus (\brr{m,\rho'}_{_{RN}} + \brr{\rho, m'}_{_{RN}} + \brr{\rho, \rho'}_{_{RN}})$$ is an element of $\mathfrak{L}'$. It is obvious that $\brr{m,m'}_{_{RN}}\in {\mathrm{Hom}}(\bar S(E),E)$. Consider $m^D$ and $\rho^D$ the coderivations of $\bar S(E\oplus V)$ defined by the morphisms $m$ and $\rho$, respectively. For $x \in \bar S(E)$ and $v \in \bar S(V)$ we have,
\begin{align*}
 \brr{m,\rho'}_{_{RN}}(x)&=\brr{m,\rho'}_{_{RN}}(v)=0\\
 \brr{m,\rho'}_{_{RN}}(x \otimes v)&= \left( m\smalcirc \rho'^D - (-1)^{|m||\rho'|} \rho' \smalcirc m^D \right)(x \odot v)\\
& = - (-1)^{|m||\rho'|} \rho'_{m^D(x)}(v) \in V
\end{align*}
and
\begin{align*}
 \brr{\rho,\rho'}_{_{RN}}(x)&=0\\
  \brr{\rho,\rho'}_{_{RN}}(v)&= \rho \smalcirc \rho'^D (v)- (-1)^{|\rho||\rho'|} \rho' \smalcirc \rho^D (v) \in V \\
 \brr{\rho,\rho'}_{_{RN}}(x \otimes v)&=
 \underbrace{(-1)^{|x||\rho'|}\rho_x(\rho_0'^D(v)) + (-1)^{|x_{(1)}||\rho'|}\rho_{x_{(1)}}(\rho_{x_{(2)}}'^D(v)) + \rho_0(\rho_x'^D(v))}_{\in V}\\
 &- (-1)^{|\rho||\rho'|}\big( \underbrace{(-1)^{|x||\rho|}\rho'_x(\rho_0^D(v))+ (-1)^{|x_{(1)}||\rho|}\rho'_{x_{(1)}}(\rho_{x_{(2)}}^D(v)) + \rho'_0(\rho_x^D(v)) }_{\in V}  \big),
 \end{align*}
which proves that $\brr{m,\rho'}_{_{RN}} + \brr{\rho, m'}_{_{RN}}+\brr{\rho, \rho'}_{_{RN}} \in {\mathrm{Hom}}((\bar S(E) \otimes \bar S(V))\oplus \bar S(V),V)$.
\end{proof}

\

Next theorem  shows that  an element $m \oplus \rho \in \mathfrak{L}'$ which is a Maurer-Cartan of $\mathfrak{L}=\Coder (\bar S(E \oplus V))$  encodes a Lie $\infty$-algebra structure on $E$ and  an action of $E$ on the Lie $\infty$-algebra $V$.

\begin{thm} \label{prop:MC:L'+h}
Let $E$ and $V$ be two graded vector spaces and $m \oplus \rho \in \mathfrak{L}'=M\oplus R$. Then,
$m \oplus \rho$ is a Maurer-Cartan element of $\mathfrak{L}'$   if and only if
$m^D$ defines a Lie $\infty$-structure on $E$ and $\rho$ is a curved Lie $\infty$-action of $E$ on $V$ .
\end{thm}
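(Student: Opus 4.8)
The plan is to unwind the Maurer--Cartan equation for the element $m \oplus \rho$ in $\mathfrak{L}' \subset \Coder(\bar S(E\oplus V))$ by evaluating it separately on the three types of arguments allowed by the decomposition $\bar S(E\oplus V) \simeq \bar S(E) \oplus (\bar S(E)\otimes \bar S(V)) \oplus \bar S(V)$. Write $N := m^D + \rho^D$ for the coderivation of $\bar S(E\oplus V)$ determined by $m \oplus \rho$; the Maurer--Cartan condition in the graded Lie algebra $(\Coder(\bar S(E\oplus V)), [\cdot,\cdot]_c)$ is exactly $[N,N]_c = 2N^2 = 0$. The key structural observations from Proposition~\ref{prop:L':Lie:subalgebra} and its proof are that $m^D$ preserves $\bar S(E)$, that $\rho^D$ lands in $(\bar S(E)\otimes\bar S(V))\oplus \bar S(V)$, and that both are $\bar S(E)$-linear in the appropriate sense; so the projection of $N^2$ onto $\bar S(E)$, onto $\bar S(V)$, and onto the mixed component can be treated independently.

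First I would evaluate $N^2$ on $x \in \bar S(E)$: since $\rho^D(x)=0$, one gets $N^2(x) = (m^D)^2(x)$, so this component vanishes iff $(m^D)^2 = 0$, i.e. iff $m^D$ defines a Lie $\infty$-structure on $E$ (using the alternative description of Lie $\infty$-algebras via square-zero coderivations recalled in the excerpt). Next I would evaluate $N^2$ on $v \in \bar S(V)$: here $m^D(v) = 0$, so $N^2(v) = \rho^D(\rho^D(v)) + \rho^D$-contributions, and decomposing $\rho^D(v) \in (\bar S(E)\otimes\bar S(V))\oplus\bar S(V)$ and feeding it back through $N$ produces precisely the curved analogue of the identity $\Phi\smalcirc M_E = M_{\Coder(\bar S(V))[1]}\smalcirc\Phi$ restricted to inputs with no $E$-factors — that is, the $n=0$ (Maurer--Cartan of $\Coder(\bar S(V))$, giving $\rho_0 =: M_V$ squares to zero) and $n=1$ pieces of the defining equations of a curved Lie $\infty$-action. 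Finally, evaluating $N^2$ on the mixed elements $x\otimes v$ with $x \in \bar S(E)$, $v \in \bar S(V)$, and exploiting $\bar S(E)$-linearity together with the coproduct identities, yields the full family \eqref{eq:def:representation}-type identities (in their curved form, indices running from $0$): this is where the interaction terms $\rho_{m^D(x)}$, $\rho_{x_{(1)}}\smalcirc\rho_{x_{(2)}}$ and $M_V\smalcirc\rho_x$ all appear and must be shown to combine exactly into the curved Lie $\infty$-morphism equation $\Phi\smalcirc M_E = M_{\Coder(\bar S(V))}\smalcirc\Phi$. Conversely, given that $m^D$ is square-zero and $\rho$ is a curved action, running the same three computations backwards shows $N^2 = 0$.

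I would organize the converse direction by simply remarking that all the identities are equivalences: the curved-action axioms are, term by term, the vanishing of the components of $N^2$, so no separate argument is needed beyond noting that the computation is reversible. It is also worth recording explicitly which component of $m\oplus\rho$ plays which role: $\rho_0$ becomes the coderivation $M_V$ (a Lie $\infty$-structure on $V$ by the $\bar S(V)$-computation), $m$ becomes $M_E$, and the maps $\rho_x$ assemble into the components $\Phi_k$ of the curved Lie $\infty$-morphism $\Phi \colon E \to \Coder(\bar S(V))[1]$, with $\Phi_0 = M_V$ in the sense of the Remark following Proposition~\ref{prop:lie:infty:algebra:E+V}.

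The main obstacle I expect is purely bookkeeping: matching the graded commutator computation of $N^2$ on mixed elements against the curved version of Definition~\ref{defn:Lie:infty:morphism} (equation \eqref{eq:def:Lie:infty:morphism} with indices starting from $0$), keeping the Koszul signs, the Sweedler splittings of both the $E$-part and the $V$-part, and the combinatorial factors from the unshuffles all consistent. The conceptual content is light — it is essentially the same identity that appears in the proof of Proposition~\ref{prop:lie:infty:algebra:E+V}, now read in reverse and with $M_V$ promoted from a fixed datum to part of the unknown — but verifying that the three projected equations are not merely implied by, but equivalent to, the curved-action axioms requires care that every term on one side has a unique counterpart on the other. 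I would therefore present the $\bar S(E)$ and $\bar S(V)$ cases briefly and devote the bulk of the written proof to the mixed case, citing the computation in Proposition~\ref{prop:lie:infty:algebra:E+V} to avoid repeating the sign analysis.
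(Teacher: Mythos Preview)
Your proposal is correct and follows essentially the same route as the paper: both arguments split the Maurer--Cartan equation for $m\oplus\rho$ according to the decomposition $\bar S(E\oplus V)\simeq \bar S(E)\oplus(\bar S(E)\otimes\bar S(V))\oplus\bar S(V)$ and identify each component with, respectively, $[m,m]_{_{RN}}=0$, the curved morphism equations, and $(\rho_0^D)^2=0$. One small slip: evaluating $N^2$ on $v\in\bar S(V)$ yields only the $n=0$ condition $\rho_0\smalcirc\rho_0^D=0$ (since $\rho^D(v)$ lands entirely in $\bar S(V)$), while the $n\geq 1$ equations---including $n=1$---all come from the mixed inputs $x\otimes v$; the paper makes the same split.
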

\begin{proof}

We have
\begin{equation} \label{eq:MC_equival_action}
\brr{m\oplus \rho,m\oplus \rho}_{_{RN}}=0 \Leftrightarrow
\begin{cases}\brr{m,m}_{_{RN}}=0\\ 2 \brr{m,\rho}_{_{RN}} + \brr{\rho, \rho}_{_{RN}}=0.
\end{cases}
\end{equation}

Similar computations to those in the proof of Proposition~\ref{prop:L':Lie:subalgebra} give, for all $v \in \bar S(V)$ and $x \in \bar S(E)$,
\begin{eqnarray*}
\lefteqn{\begin{cases}
\left( 2 \brr{m,\rho}_{_{RN}} + \brr{\rho, \rho}_{_{RN}}\right)(v)= 0 \\
\left( 2 \brr{m,\rho}_{_{RN}} + \brr{\rho, \rho}_{_{RN}}\right)(x\otimes v)=0
\end{cases}}\\
& \Leftrightarrow
 \begin{cases}
 \rho_0 \smalcirc \rho_0^D(v)=0\\
  \rho_{m^D(x)}(v) = \left( -\brr{\rho_0, \rho_x}_{_{RN}}
  -  \frac{(-1)^{|x_{(1)}|} }{2}\brr{\rho_{x_{(1)}}, \rho_{x_{(2)}}}_{_{RN}}\right)(v).
  \end{cases}
\end{eqnarray*}

Since $m \oplus \rho$ is a degree $+1$ element of $\mathfrak{L}'$, the  right hand-side of \eqref{eq:MC_equival_action}  means that $m^D$ defines a Lie $\infty$-algebra structure on $E$ and $\rho= \sum_{k\geq 0} \rho_k$ is a  curved Lie $\infty$-action of $E$ on $V$.
Notice that $\rho_0^D:\bar S(V) \to \bar S(V)$ equips $V$ with a Lie $\infty$-structure.

Reciprocally, if $(E,m^D)$ is a Lie $\infty$-algebra and $\rho$ is a curved Lie $\infty$-action of $E$ on $V$, the degree $+1$ element $m \oplus \rho$ of $\mathfrak{L}'$ is a Maurer-Cartan element of $\mathfrak{L}'$.
\end{proof}

Next proposition gives the Lie $\infty$-algebra that controls the deformations of the actions of $E$ on $V$ \cite{G}.

\begin{prop}  \label{prop:Lie:infty:controls:MC}
Let  $m \oplus \rho$ be a Maurer-Cartan element of $\mathfrak{L}'$ and $m' \oplus \rho'$ a degree $+1$ element of $\mathfrak{L}'$. Then, $m \oplus \rho+ m' \oplus \rho'$ is a Maurer-Cartan element of $\mathfrak{L}'$ if and only if $m' \oplus \rho'$ is a Maurer-Cartan element of $\mathfrak{L}'\,^{m \oplus \rho}$.
Here, $\mathfrak{L}'\,^{m \oplus \rho}$ denotes the DGLA which is the twisting of $\mathfrak{L}'$ by $m \oplus \rho$.
\end{prop}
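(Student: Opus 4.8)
The plan is to recognize this as a special case of a standard, purely formal fact about twisting in (differential) graded Lie algebras, which the paper already attributes to \cite{G}: if $(\mathfrak{g}, d, [\cdot,\cdot])$ is a DGLA (or Lie $\infty$-algebra) and $\mu$ is a Maurer-Cartan element, then for any element $\nu$, $\mu+\nu$ is Maurer-Cartan if and only if $\nu$ is Maurer-Cartan in the twisted algebra $\mathfrak{g}^{\mu}$. First I would observe that $\mathfrak{L}'$ is a genuine graded Lie algebra (Proposition~\ref{prop:L':Lie:subalgebra}), with zero differential, so its twisting $\mathfrak{L}'\,^{m\oplus\rho}$ by the Maurer-Cartan element $m\oplus\rho$ is the DGLA with the same bracket $\brr{\cdot,\cdot}_{_{RN}}$ and differential $d^{m\oplus\rho}:=\brr{m\oplus\rho,\cdot}_{_{RN}}$ (this uses that $\mathfrak{L}'$ is a graded Lie algebra, hence $l_k=0$ for $k\geq 3$, so the only nontrivial twisted brackets are $l_1^{m\oplus\rho}=\brr{m\oplus\rho,\cdot}_{_{RN}}$ and $l_2^{m\oplus\rho}=\brr{\cdot,\cdot}_{_{RN}}$, as recalled in the excerpt after Equation~\eqref{def:twisting:MC}). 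One must first check that $d^{m\oplus\rho}$ indeed squares to zero and is a derivation of the bracket, which follows from the graded Jacobi identity together with $\brr{m\oplus\rho,m\oplus\rho}_{_{RN}}=0$; alternatively this is exactly the content of the statement, quoted in the excerpt, that the twisting of a Lie $\infty$-algebra by a Maurer-Cartan element is again a Lie $\infty$-algebra.

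The core computation is then a one-line expansion. Writing $\mu=m\oplus\rho$ and $\nu=m'\oplus\rho'$, both of degree $+1$, and using that $\mathfrak{L}'$ is a graded Lie algebra with vanishing differential, the Maurer-Cartan operator is $\mathrm{MC}(\xi)=\tfrac12\brr{\xi,\xi}_{_{RN}}$. Bilinearity and graded symmetry of the bracket give
\begin{equation*}
\tfrac12\brr{\mu+\nu,\mu+\nu}_{_{RN}}=\tfrac12\brr{\mu,\mu}_{_{RN}}+\brr{\mu,\nu}_{_{RN}}+\tfrac12\brr{\nu,\nu}_{_{RN}}.
\end{equation*}
Since $\mu$ is Maurer-Cartan, the first term vanishes, so $\mu+\nu$ is Maurer-Cartan if and only if $\brr{\mu,\nu}_{_{RN}}+\tfrac12\brr{\nu,\nu}_{_{RN}}=0$; but $\brr{\mu,\nu}_{_{RN}}=d^{\mu}\nu$ is precisely the differential of the twisted DGLA $\mathfrak{L}'\,^{\mu}$ applied to $\nu$, and $\tfrac12\brr{\nu,\nu}_{_{RN}}$ is the quadratic Maurer-Cartan term in $\mathfrak{L}'\,^{\mu}$ (the bracket is unchanged by twisting of a graded Lie algebra). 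Hence the displayed condition is exactly the Maurer-Cartan equation $d^{\mu}\nu+\tfrac12\brr{\nu,\nu}_{_{RN}}=0$ for $\nu$ in $\mathfrak{L}'\,^{\mu}$, which proves both implications simultaneously.

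I expect the only real obstacle to be bookkeeping rather than mathematics: one must make sure the sign and factor conventions in Equation~\eqref{def:twisting:MC} specialize correctly to a graded Lie algebra (so that the twisted brackets really are $l_1^{\mu}=\brr{\mu,\cdot}_{_{RN}}$ and $l_2^{\mu}=\brr{\cdot,\cdot}_{_{RN}}$, with no stray coefficients), and that the Maurer-Cartan equation in the twisted algebra unwinds to $\sum_{k\geq 1}\tfrac1{k!}l_k^{\mu}(\nu,\dots,\nu)=l_1^{\mu}(\nu)+\tfrac12 l_2^{\mu}(\nu,\nu)=0$. Once these identifications are in place — and the paper has already recorded the relevant specialization for symmetric graded Lie algebras right after \eqref{def:twisting:MC} — the proof is the short bilinear expansion above. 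Convergence is not an issue here because $\mathfrak{L}'$ is an honest graded Lie algebra: all the relevant sums are finite. I would therefore write the proof as: recall that twisting a graded Lie algebra by a Maurer-Cartan element yields the DGLA $(\mathfrak{L}', \brr{\mu,\cdot}_{_{RN}},\brr{\cdot,\cdot}_{_{RN}})$; then expand $\brr{\mu+\nu,\mu+\nu}_{_{RN}}$ and read off the equivalence.
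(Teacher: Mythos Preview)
Your proof is correct. The paper does not actually give a proof of this proposition: it simply states it, with the preceding sentence attributing the underlying fact to \cite{G}. Your argument is precisely the standard bilinear expansion that justifies the general twisting principle being invoked, specialized (as you note) to the case of a graded Lie algebra with zero differential, so your write-up is exactly the content the paper is tacitly importing from \cite{G}.
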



\subsection{Deformation of \texorpdfstring{$\O$}--operators}
Let $\mathfrak{h}$ be the abelian Lie subalgebra of $\mathfrak{L}=\Coder(\bar S(E\oplus V))$ considered in Proposition~\ref{prop:h:abelian:subalgebra} and $\mathcal{P}:\mathfrak{L} \to \mathfrak{h}$ the projection onto $\mathfrak{h}$. Let $\Delta \in \mathfrak{L}'$ be a Maurer-Cartan element of $\mathfrak{L}$. Then,  $\ds \left(\mathfrak{L}, \mathfrak{h},  \mathcal{P}, \Delta \right)$  is a $V$-data and $\mathfrak{h}$ has a Lie $\infty$-structure given by the brackets:
$$\partial_{k}(a_{1},\ldots,a_{k})=\mathcal{P}([[\ldots\brr{\Delta, a_{1}}_{_{RN}}\ldots]_{_{RN}}, a_{k}]_{_{RN}}), \quad k\geq 1.$$
We denote by $\mathfrak{h}_\Delta$ the Lie $\infty$-algebra $\mathfrak{h}$ equipped with the above  structure.

The $V$-data $\ds \left(\mathfrak{L}, \mathfrak{h},  \mathcal{P}, \Delta \right)$ also provides a Lie $\infty$-algebra structure on $\mathfrak{L}[1] \oplus \mathfrak{h}$, that we denote by $(\mathfrak{L}[1] \oplus \mathfrak{h})_\Delta$, with brackets \cite{V05}:

\begin{equation} \label{eq:bracket:q:Delta}
\begin{cases}
q_1^\Delta((x,a_1))=(-\brr{\Delta, x}_{_{RN}}, \mathcal{P}(x+ \brr{\Delta, a_1}_{_{RN}})) \vspace{.2cm}\\
q_2^\Delta(x,x')= (-1)^{deg(x)}\brr{x,x'}_{_{RN}}\vspace{.2cm} \\
q_k^\Delta(x, a_1, \ldots, a_{k-1})=\mathcal{P}([ \ldots [[x, a_1]_{_{RN}}, a_2]_{_{RN}}\ldots a_{k-1}]_{_{RN}}),\,\,\,k\geq2,\vspace{.2cm} \\
q_k^\Delta(a_1, \ldots, a_{k})= \partial_k(a_1, \ldots, a_{k}),\,\,\,k\geq 1,
\end{cases}
\end{equation}
$x,x' \in \mathfrak{L}[1]$ and $a_1, \ldots,a_{k-1} \in \mathfrak{h}$. Here, $deg(x)$ is the degree of $x$ in $\mathfrak{L}$.

\

Moreover, since $\mathfrak{L}'$ is a Lie subalgebra of $\mathfrak{L}$ satisfying $\brr{\Delta, \mathfrak{L}'}\subset \mathfrak{L}'$, the brackets $\set{q_k^\Delta}_{k \in \Nn}$ restricted to $\mathfrak{L}'[1] \oplus \mathfrak{h}$  define a Lie $\infty$-algebra structure on  $\mathfrak{L}'[1] \oplus \mathfrak{h}$, that we denote by $(\mathfrak{L}'[1] \oplus \mathfrak{h})_\Delta$. Notice that the restrictions of the brackets $\set{q_k^\Delta}$ to $\mathfrak{L}'[1] \oplus \mathfrak{h}$ are given by the same expressions as in \eqref{eq:bracket:q:Delta} except for $k=1$:
$$q_1^\Delta((x,a_1))=(-\brr{\Delta, x}_{_{RN}}, \mathcal{P}(\brr{\Delta, a_1}_{_{RN}}))=(-\brr{\Delta, x}_{_{RN}}, \partial_1(a_1)),$$
because $\mathcal{P}(\mathfrak{L}')=0$.
Of course, $\mathfrak{h}_\Delta$ is a Lie $\infty$-subalgebra of $(\mathfrak{L}'[1]\oplus \mathfrak{h})_\Delta$.

\begin{rem}
The brackets \eqref{eq:bracket:q:Delta} that define the Lie $\infty$-algebra structure of $(\mathfrak{L}[1] \oplus \mathfrak{h})_\Delta$ coincide with those of $\mathfrak{h}_\Delta$ for $x=x'=0$ . So, an easy computation yields
$$t \in \textrm{MC}(\mathfrak{h}_\Delta) \, \Leftrightarrow \, (0,t) \in  \textrm{MC}(\mathfrak{L}'[1] \oplus \mathfrak{h})_\Delta.$$
\end{rem}

Theorem 3 in \cite{FZ} yields:

\begin{prop} \label{prop_FZ}
Consider the $V$-data $\ds \left(\mathfrak{L}, \mathfrak{h},  \mathcal{P}, \Delta \right)$, with $\Delta\in \emph{MC}(\mathfrak{L}')$ and let $t$ be a degree zero element of $\mathfrak{h}$. Then,
$$t \in \emph{MC}(\mathfrak{h}_{\Delta})
\, \Leftrightarrow \,
(\Delta,t)\in \emph{MC}(\mathfrak{L}[1]\oplus \mathfrak{h})_{\Delta}.$$
\end{prop}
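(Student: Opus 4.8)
The plan is to compute the Maurer--Cartan equation of $(\mathfrak{L}[1]\oplus\mathfrak{h})_\Delta$ for the degree zero element $(\Delta,t)$ and compare it, component by component, with the two conditions "$\Delta\in\mathrm{MC}(\mathfrak{L}')$'' (which is given) and "$t\in\mathrm{MC}(\mathfrak{h}_\Delta)$''. First I would write out $\sum_{k\geq1}\frac1{k!}q_k^\Delta\big((\Delta,t),\ldots,(\Delta,t)\big)=0$ using the brackets \eqref{eq:bracket:q:Delta}, splitting the element into its $\mathfrak{L}[1]$-part $\Delta$ and its $\mathfrak{h}$-part $t$. The $\mathfrak{L}[1]$-component of the sum only receives contributions from $q_1^\Delta$ and $q_2^\Delta$ when their arguments lie in $\mathfrak{L}[1]$ (since $q_k^\Delta$ with $k\geq 2$ and at least one $\mathfrak{L}[1]$-slot projects into $\mathfrak{h}$, and the purely-$\mathfrak{h}$ brackets land in $\mathfrak{h}$): this gives $-\brr{\Delta,\Delta}_{_{RN}}+\tfrac12\cdot(-1)^{\deg\Delta}\brr{\Delta,\Delta}_{_{RN}}$, whose vanishing is exactly $\brr{\Delta,\Delta}_{_{RN}}=0$, i.e. $\Delta\in\mathrm{MC}(\mathfrak{L})$, which holds because $\Delta\in\mathrm{MC}(\mathfrak{L}')$.

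Next I would isolate the $\mathfrak{h}$-component. Here $q_1^\Delta$ contributes $\mathcal P\big(\Delta+\brr{\Delta,t}_{_{RN}}\big)$, but since $\Delta\in\mathfrak{L}'$ and $\mathcal P(\mathfrak{L}')=0$, the stray term $\mathcal P(\Delta)$ dies and we are left with $\mathcal P\brr{\Delta,t}_{_{RN}}=\partial_1(t)$. For $k\geq2$, the bracket $q_k^\Delta$ applied to $k$ copies of $(\Delta,t)$ decomposes by multilinearity into terms indexed by which slots carry $\Delta$ and which carry $t$; by the structure of \eqref{eq:bracket:q:Delta}, the only surviving type with exactly one $\Delta$ and $k-1$ copies of $t$ is $q_k^\Delta(\Delta,t,\ldots,t)=\mathcal P([\ldots\brr{\Delta,t}_{_{RN}}\ldots,t]_{_{RN}})=\partial_k(t,\ldots,t)$, the purely-$\mathfrak{h}$ term $q_k^\Delta(t,\ldots,t)=\partial_k(t,\ldots,t)$ agrees with it, and terms with two or more $\Delta$'s vanish because $q_k^\Delta$ with $k\geq 3$ requires all but the first slot to be in $\mathfrak{h}$ while $q_2^\Delta$ of two $\Delta$'s already appeared in the $\mathfrak{L}[1]$-component. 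A careful bookkeeping of the combinatorial multiplicities (how many of the $k$ copies may be placed in the single $\mathfrak{L}[1]$-slot of $q_k^\Delta$) collapses the $\mathfrak{h}$-component of the MC sum to $\sum_{k\geq1}\frac1{k!}\partial_k(t,\ldots,t)$, which vanishes precisely when $t\in\mathrm{MC}(\mathfrak{h}_\Delta)$.

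Combining the two components: given $\Delta\in\mathrm{MC}(\mathfrak{L}')$, the $\mathfrak{L}[1]$-component of the MC equation for $(\Delta,t)$ is automatically satisfied, so $(\Delta,t)\in\mathrm{MC}(\mathfrak{L}[1]\oplus\mathfrak{h})_\Delta$ if and only if the $\mathfrak{h}$-component vanishes, i.e. if and only if $t\in\mathrm{MC}(\mathfrak{h}_\Delta)$. Convergence of all the infinite sums is guaranteed because $\mathfrak{h}_\Delta$ and $(\mathfrak{L}[1]\oplus\mathfrak{h})_\Delta$ are filtered Lie $\infty$-algebras (as noted after Proposition~\ref{prop:Vdata:h}, following the argument in \cite{LST2021}). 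I expect the main obstacle to be precisely the middle step: correctly tracking which mixed brackets $q_k^\Delta$ with $\Delta$ in the $\mathfrak{L}[1]$-slot survive and with what numerical coefficient, so that the $\Delta$-terms reassemble exactly into $\partial_k$ and cancel the extra $\mathcal P(\Delta)$ — this is where the interplay between $\brr{\Delta,\mathfrak{L}'}\subset\mathfrak{L}'$ and $\mathcal P(\mathfrak{L}')=0$ does the real work, and it is essentially the content of Theorem~3 in \cite{FZ} specialized to our $V$-data.
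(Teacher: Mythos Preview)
The paper does not actually prove this proposition: it simply records it as a consequence of Theorem~3 in \cite{FZ} and gives no argument. Your proposal instead unpacks that citation and verifies the Maurer--Cartan equation for $(\Delta,t)$ directly from the brackets \eqref{eq:bracket:q:Delta}. That is a perfectly legitimate route, and your identification of the $\mathfrak L[1]$-component with $\brr{\Delta,\Delta}_{_{RN}}=0$ and the key use of $\mathcal P(\mathfrak L')=0$ to kill the stray $\mathcal P(\Delta)$ are correct.

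There is, however, one concrete slip in the bookkeeping. For $k\geq 2$ the mixed bracket $q_k^\Delta(\Delta,t,\ldots,t)$ carries only $k-1$ copies of $t$, and by definition
\[
q_k^\Delta(\Delta,t,\ldots,t)=\mathcal P\big([\ldots[\brr{\Delta,t}_{_{RN}},t]_{_{RN}}\ldots,t]_{_{RN}}\big)=\partial_{k-1}(t,\ldots,t),
\]
not $\partial_k(t,\ldots,t)$ as you wrote; it does \emph{not} ``agree with'' the purely-$\mathfrak h$ term $q_k^\Delta(t,\ldots,t)=\partial_k(t,\ldots,t)$. Redoing the count, the $\mathfrak h$-component of $\sum_{k\geq 1}\frac{1}{k!}q_k^\Delta\big((\Delta,t)^{\odot k}\big)$ picks up $\partial_j(t,\ldots,t)$ twice for every $j\geq 1$ --- once from the pure term at level $k=j$ with weight $\tfrac{1}{j!}$, and once from the mixed term at level $k=j+1$ with weight $\tfrac{j+1}{(j+1)!}=\tfrac{1}{j!}$ --- so the $\mathfrak h$-component equals $2\sum_{j\geq 1}\frac{1}{j!}\partial_j(t,\ldots,t)$ rather than the sum itself. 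The extra factor of $2$ is of course immaterial for the equivalence, so your conclusion stands once this index shift is corrected.
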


Recall that, given an element $t \in \mathfrak{h}={\mathrm{Hom}}(\bar S(V), E)$, the corresponding morphism of coalgebras $T: \bar S(V) \to \bar S(E)$ is an $\O$-operator if and only if $t$ is a Maurer-Cartan element of $\mathfrak{h}_\Delta$ (Proposition~\ref{prop:1-1correspondence}). Moreover, given a Maurer-Cartan element $m \oplus \rho$ of $\mathfrak{L'}$, we know from Theorem \ref{prop:MC:L'+h} that $(E, m^D)$ is a Lie $\infty$-algebra and $\rho$ is a curved Lie $\infty$-action of $E$ on $V$. So,
 an $\O$-operator can be seen as a Maurer-Cartan element of  the Lie $\infty$-algebra $(\mathfrak{L}'[1] \oplus \mathfrak{h})_\Delta$:

\begin{prop}  \label{prop:T:MC}
Let $E$ and $V$ be two graded vector spaces. Consider a morphism of coalgebras $T: \bar S(V) \to \bar S(E)$ defined by $t \in \mathrm{Hom}(\bar S(V), E)$, and the $V$-data $\ds \left(\mathfrak{L}, \mathfrak{h},  \mathcal{P}, \Delta \right)$, with $\Delta:=m \oplus \rho \in  \emph{MC}(\mathfrak{L'})$. Then,
$T$ is an $\O$-operator  on $E$ with respect to  the  curved Lie $\infty$-action $\rho$ if and only if $(\Delta, t)$ is a Maurer-Cartan element of $(\mathfrak{L}'[1] \oplus \mathfrak{h})_\Delta$.
\end{prop}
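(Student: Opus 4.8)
The plan is to combine the correspondence between $\O$-operators and Maurer--Cartan elements of $\mathfrak{h}_\Delta$ with the ``adding back'' statement of Proposition~\ref{prop_FZ}, and then to push the latter down from $(\mathfrak{L}[1]\oplus\mathfrak{h})_\Delta$ to its Lie $\infty$-subalgebra $(\mathfrak{L}'[1]\oplus\mathfrak{h})_\Delta$. First I would record that, by Theorem~\ref{prop:MC:L'+h}, the hypothesis $\Delta=m\oplus\rho\in\textrm{MC}(\mathfrak{L}')$ means precisely that $m^D$ is a Lie $\infty$-structure on $E$ and $\rho$ a curved Lie $\infty$-action of $E$ on $V$; equivalently, the coderivation $\Delta^D$ of $\bar S(E\oplus V)$ determined by $m\oplus\rho$ is a Lie $\infty$-structure of the very shape $M_{E\oplus V}=M_E+\Upsilon+M_V$ appearing in Proposition~\ref{prop:Vdata:h}, with $\rho_0^D$ in the role of $M_V$ and $\rho$ restricted to positive arities in the role of the action. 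Hence $(\mathfrak{L},\mathfrak{h},\mathcal{P},\Delta)$ is a $V$-data of exactly the kind considered in Section~\ref{section3}, the induced brackets $\partial_k$ on $\mathfrak{h}$ are those of Lemma~\ref{lemma:bracket:voronov} (with $M_{E\oplus V}$ replaced by $\Delta^D$), and re-running the computation of Proposition~\ref{prop:1-1correspondence} verbatim shows that $T$ is an $\O$-operator on $E$ with respect to $\rho$ if and only if $t\in\textrm{MC}(\mathfrak{h}_\Delta)$.

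Next I would apply Proposition~\ref{prop_FZ}, which is valid since $\Delta\in\textrm{MC}(\mathfrak{L}')\subset\textrm{MC}(\mathfrak{L})$ and $t$ has degree zero: it gives $t\in\textrm{MC}(\mathfrak{h}_\Delta)\Leftrightarrow(\Delta,t)\in\textrm{MC}(\mathfrak{L}[1]\oplus\mathfrak{h})_\Delta$. To replace $\mathfrak{L}$ by $\mathfrak{L}'$ I would observe that the Maurer--Cartan series of the element $(\Delta,t)$ is the same whether evaluated with the brackets $\set{q_k^\Delta}_{k\in\Nn}$ of $(\mathfrak{L}[1]\oplus\mathfrak{h})_\Delta$ or with their restrictions to $(\mathfrak{L}'[1]\oplus\mathfrak{h})_\Delta$: for $k\geq 2$ the two families of brackets are literally the same, while $q_1^\Delta$ differs only by the summand $\mathcal{P}(x)$, which vanishes at $x=\Delta\in\mathfrak{L}'$ because $\mathcal{P}(\mathfrak{L}')=0$. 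Therefore $(\Delta,t)\in\textrm{MC}(\mathfrak{L}[1]\oplus\mathfrak{h})_\Delta\Leftrightarrow(\Delta,t)\in\textrm{MC}(\mathfrak{L}'[1]\oplus\mathfrak{h})_\Delta$, and chaining the equivalences yields the statement.

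I expect the only delicate point to be the first step: confirming that Proposition~\ref{prop:1-1correspondence}, stated there for (uncurved) Lie $\infty$-actions, still applies when the action is curved, i.e.\ when $\rho_0=\rho_0^D$ contributes a genuine Lie $\infty$-structure on $V$ rather than the zero bracket. Concretely, this requires checking that $\Delta^D$ is indeed a degree $+1$ coderivation of $\bar S(E\oplus V)$ squaring to zero and restricting to $\rho_0^D$ on $\bar S(V)$, so that $\Delta\in(\ker\mathcal{P})_1$ and the $V$-data axioms of Proposition~\ref{prop:Vdata:h} hold; this is essentially the content of Theorem~\ref{prop:MC:L'+h}. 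Once that identification is made explicit, Lemma~\ref{lemma:bracket:voronov} reproduces the correct Voronov brackets and the equivalence ``$\O$-operator $\Leftrightarrow t\in\textrm{MC}(\mathfrak{h}_\Delta)$'' is formally identical to the one already proved, so nothing else in the argument requires new computation.
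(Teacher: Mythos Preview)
Your proposal is correct and follows essentially the same route the paper intends: the paper leaves Proposition~\ref{prop:T:MC} without a written proof, relying on the chain Proposition~\ref{prop:1-1correspondence} $\Rightarrow$ Proposition~\ref{prop_FZ} together with the observation $\mathcal{P}(\mathfrak{L}')=0$ already recorded in the text. Your write-up simply makes explicit the two points the paper glosses over---that the curved action fits the $V$-data framework of Proposition~\ref{prop:Vdata:h}, and that the Maurer--Cartan condition for $(\Delta,t)$ is unchanged when passing from $(\mathfrak{L}[1]\oplus\mathfrak{h})_\Delta$ to $(\mathfrak{L}'[1]\oplus\mathfrak{h})_\Delta$---so there is no genuine difference in approach.
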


\begin{cor}
If $T$ is an $\O$-operator on the Lie $\infty$-algebra $(E, m^D)$ with respect to the curved Lie $\infty$-action $\rho$ of $E$ on $V$, then $((\mathfrak{L}'[1] \oplus \mathfrak{h})_{m \oplus \rho})^{(m \oplus \rho,t)}$ is a Lie $\infty$-algebra.
\end{cor}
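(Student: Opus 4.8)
The plan is to obtain the statement as a formal consequence of Proposition~\ref{prop:T:MC} together with the fact, recalled around Equation~\eqref{def:twisting:MC} (following \cite{G}), that the twisting of a Lie $\infty$-algebra by a Maurer-Cartan element is again a Lie $\infty$-algebra.

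First I would check that $(m\oplus\rho,t)$ is a degree zero element of $\mathfrak{L}'[1]\oplus\mathfrak{h}$, so that twisting by it makes sense: by hypothesis $t$ is a degree zero element of $\mathfrak{h}={\mathrm{Hom}}(\bar S(V),E)$, while $m\oplus\rho$ is a degree $+1$ element of $\mathfrak{L}'\subset\Coder(\bar S(E\oplus V))$ and hence a degree zero element of $\mathfrak{L}'[1]$. In particular the expression $((\mathfrak{L}'[1]\oplus\mathfrak{h})_{m\oplus\rho})^{(m\oplus\rho,t)}$ is well posed, the inner subscript referring to the $V$-data Lie $\infty$-structure determined by $\Delta:=m\oplus\rho$ and the outer superscript to the twisting by the degree zero element $(m\oplus\rho,t)$.

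Next I would apply Proposition~\ref{prop:T:MC} with this $\Delta$: since $\Delta\in\textrm{MC}(\mathfrak{L}')$ and $T$ is an $\O$-operator on $(E,m^D)$ with respect to the curved Lie $\infty$-action $\rho$, the pair $(\Delta,t)=(m\oplus\rho,t)$ is a Maurer-Cartan element of the Lie $\infty$-algebra $(\mathfrak{L}'[1]\oplus\mathfrak{h})_\Delta$. It then suffices to twist this Lie $\infty$-algebra by the Maurer-Cartan element $(\Delta,t)$: the twisting construction yields that $((\mathfrak{L}'[1]\oplus\mathfrak{h})_{m\oplus\rho})^{(m\oplus\rho,t)}$ is a Lie $\infty$-algebra, which is exactly the assertion.

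The only point needing attention is the convergence of the infinite sums defining the twisted brackets in~\eqref{def:twisting:MC}, which requires $(\mathfrak{L}'[1]\oplus\mathfrak{h})_\Delta$ to be weakly filtered. This is handled exactly as for $\mathfrak{h}_\Delta$ (cf. the remark following Proposition~\ref{prop:Vdata:h}): the word-length filtration on $\Coder(\bar S(E\oplus V))$ restricts to a filtration on $\mathfrak{L}'[1]\oplus\mathfrak{h}$ that is compatible with the brackets $\set{q_k^\Delta}_{k\in\Nn}$ of \eqref{eq:bracket:q:Delta}, so the twisting is well defined. I expect this filtration bookkeeping to be the only genuine (and mild) obstacle; everything else is immediate from the results already established.
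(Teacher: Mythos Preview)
Your argument is correct and is exactly the intended one: the paper states this corollary without proof, as an immediate consequence of Proposition~\ref{prop:T:MC} together with the general fact (recalled after \eqref{def:twisting:MC}) that twisting a Lie $\infty$-algebra by a Maurer-Cartan element yields a Lie $\infty$-algebra. Your remarks on degrees and on the filtration needed for convergence are appropriate and match the paper's treatment of the analogous point for $\mathfrak{h}_\Delta$.
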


As a consequence of Theorem 3 in \cite{FZ}, we obtain the Lie $\infty$-algebra that controls the deformation of $\O$-operators on $E$ with respect to a   fixed curved Lie $\infty$-action on $V$:

 \begin{cor} Let $E$ and  $V$ be two  graded vector spaces and consider the $V$-data $(\mathfrak{L}, \mathfrak{h}, \mathcal{P}, \Delta:=m\oplus \rho)$.
Let $T$ be an $\O$-operator on $(E,m^D)$ with respect to the curved Lie $\infty$-action $\rho$ and $T':\bar S(V) \to \bar S(E)$ a (degree zero) morphism of coalgebras defined by $t' \in \mathrm{Hom}(\bar S(V), E)$. Then, $T+T'$ is an $\O$-operator on $E$ with respect to the curved Lie $\infty$-action $\rho$ if and only if $(\Delta, t')$ is a Maurer-Cartan element of $(\mathfrak{L}'[1]\oplus \mathfrak{h})_{\Delta}^{(\Delta, t)}$.
\end{cor}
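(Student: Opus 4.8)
The plan is to reduce everything to the twisting formalism already assembled in this section. By Proposition~\ref{prop:T:MC}, the statement that $T$ is an $\O$-operator on $E$ with respect to $\rho$ is equivalent to $(\Delta,t)$ being a Maurer-Cartan element of $(\mathfrak{L}'[1]\oplus\mathfrak{h})_\Delta$; the same proposition applied to $T+T'$, whose defining element is $t+t'$, says that $T+T'$ is an $\O$-operator with respect to $\rho$ if and only if $(\Delta,t+t')$ is a Maurer-Cartan element of $(\mathfrak{L}'[1]\oplus\mathfrak{h})_\Delta$. So the corollary becomes the purely homological assertion: $(\Delta,t+t')$ is Maurer-Cartan in $(\mathfrak{L}'[1]\oplus\mathfrak{h})_\Delta$ if and only if $(\Delta,t')$ — equivalently $(0,t')$ after the obvious translation — is Maurer-Cartan in the twisting of $(\mathfrak{L}'[1]\oplus\mathfrak{h})_\Delta$ by $(\Delta,t)$.

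Next I would invoke the general twisting principle recalled in Section~\ref{section1} (following \cite{G}): for any Lie $\infty$-algebra $L$ with a Maurer-Cartan element $z$, an element $z+z'$ is Maurer-Cartan in $L$ if and only if $z'$ is Maurer-Cartan in the twisted Lie $\infty$-algebra $L^z$. Here one takes $L=(\mathfrak{L}'[1]\oplus\mathfrak{h})_\Delta$ and $z=(\Delta,t)$, which is a genuine Maurer-Cartan element of $L$ by Proposition~\ref{prop:T:MC} together with the hypothesis that $T$ is an $\O$-operator. Writing $(\Delta,t+t')=(\Delta,t)+(0,t')$, the twisting principle gives that $(\Delta,t+t')\in\mathrm{MC}(L)$ if and only if $(0,t')\in\mathrm{MC}(L^{(\Delta,t)})$. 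One should also note that the sums defining Maurer-Cartan elements and the twisted brackets converge, since $(\mathfrak{L}'[1]\oplus\mathfrak{h})_\Delta$ is (weakly) filtered, just as $\mathfrak{h}$ is by the remark after Proposition~\ref{prop:Vdata:h}; this is the technical point that legitimizes applying \cite[Theorem 3]{FZ} and the twisting construction verbatim.

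Finally I would identify the two descriptions of the ``new'' element. In Proposition~\ref{prop:T:MC} an $\O$-operator corresponds to the pair $(\Delta,t')$, not $(0,t')$, so to state the corollary in the same language one observes that in the twisted algebra $L^{(\Delta,t)}=(\mathfrak{L}'[1]\oplus\mathfrak{h})_\Delta^{(\Delta,t)}$ the first slot is unchanged — the $\mathfrak{L}'[1]$-component of a Maurer-Cartan element of the twisting is still forced to record the Lie $\infty$-structure and action data $\Delta$ — so that $(0,t')\in\mathrm{MC}(L^{(\Delta,t)})$ is exactly the condition $(\Delta,t')\in\mathrm{MC}((\mathfrak{L}'[1]\oplus\mathfrak{h})_\Delta^{(\Delta,t)})$ in the notation of the statement. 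Combining this with the equivalence from the previous paragraph and with Proposition~\ref{prop:T:MC} applied to $T+T'$ yields the claim.

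The only real obstacle is bookkeeping: making sure the affine shift $(\Delta,t+t')=(\Delta,t)+(0,t')$ is compatible with how Proposition~\ref{prop:T:MC} packages an $\O$-operator as a pair whose first component is $\Delta$ rather than $0$, and checking that the twisted brackets $q_k^{\Delta,(\Delta,t)}$ genuinely restrict to the subalgebra in the way the notation $(\mathfrak{L}'[1]\oplus\mathfrak{h})_\Delta^{(\Delta,t)}$ suggests. Both are consequences of the explicit formulas \eqref{eq:bracket:q:Delta} and the fact, used already in this subsection, that $\brr{\Delta,\mathfrak{L}'}\subset\mathfrak{L}'$; no new computation beyond expanding \eqref{def:twisting:MC} is needed.
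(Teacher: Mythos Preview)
Your outline is essentially the paper's argument: the paper's proof is the single line
\[
(\Delta, t+t')\in \mathrm{MC}\big((\mathfrak{L}'[1]\oplus \mathfrak{h})_{\Delta}\big)\ \Longleftrightarrow\ (\Delta,t')\in \mathrm{MC}\big((\mathfrak{L}'[1]\oplus \mathfrak{h})_{\Delta}^{(\Delta, t)}\big),
\]
quoted directly from \cite[Theorem~3]{FZ}, together with Proposition~\ref{prop:T:MC}. You recover the same equivalence, but via the standard twisting principle applied to the decomposition $(\Delta,t+t')=(\Delta,t)+(0,t')$, which yields $(0,t')$ rather than $(\Delta,t')$ on the right-hand side. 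So far so good.

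The weak point is your third paragraph. The assertion that ``the $\mathfrak{L}'[1]$-component of a Maurer-Cartan element of the twisting is still forced to record $\Delta$'' is not correct as stated (nothing forces the first slot to be $\Delta$; indeed $(0,t')$ is a perfectly good element), and it does not by itself explain why the Maurer-Cartan conditions for $(0,t')$ and $(\Delta,t')$ in $(\mathfrak{L}'[1]\oplus\mathfrak{h})_\Delta^{(\Delta,t)}$ coincide. By the twisting principle these two conditions unwind to $(\Delta,t+t')\in\mathrm{MC}$ and $(2\Delta,t+t')\in\mathrm{MC}$ respectively, which are \emph{a priori} different. They do coincide, but for a concrete reason you do not give: using \eqref{eq:bracket:q:Delta} and $\mathcal{P}(\mathfrak{L}')=0$, the $\mathfrak{h}$-component of the Maurer-Cartan equation for $(c\Delta,a)$ in $(\mathfrak{L}'[1]\oplus\mathfrak{h})_\Delta$ is $(1+c)\sum_{k\geq1}\tfrac{1}{k!}\partial_k(a,\ldots,a)$, while the $\mathfrak{L}'[1]$-component vanishes because $[\Delta,\Delta]_{_{RN}}=0$; hence for any $c\neq -1$ the condition is equivalent to $a\in\mathrm{MC}(\mathfrak{h}_\Delta)$. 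This is exactly the computation hidden in the paper's black-box citation of \cite{FZ}. Either cite \cite[Theorem~3]{FZ} directly for the displayed equivalence, as the paper does, or replace the hand-wave with this one-line computation.
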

\begin{proof}
Let $t \in \mathfrak{h}$ be the morphism defined by $T$. Then \cite{FZ},
$$(\Delta, t+t')\in \textrm{MC}(\mathfrak{L}'[1]\oplus \mathfrak{h})_{\Delta} \, \Leftrightarrow  (\Delta,t')\in \textrm{MC}(\mathfrak{L}'[1]\oplus \mathfrak{h})_{\Delta}^{(\Delta, t)}.
$$
\end{proof}


\end{document}